\def\im{\mathop{\rm Im}\nolimits}
\def\diag{\mathop{\rm diag}\nolimits}
\def\re{\mathop{\rm Re}\nolimits}
\def\ker{\mathop{\rm Ker}\nolimits}
\def\rank{\mathop{\rm rank}\nolimits}
\def\nrank{\mathop{\rm nrank}\nolimits}
\newcommand{\CC}{\mathbb C}
\newcommand{\RR}{\mathbb R}
\newtheorem{theorem}{Theorem}[section]
\newtheorem{definition}[theorem]{Definition}
\newtheorem{corollary}[theorem]{Corollary}
\newtheorem{lemma}[theorem]{Lemma}
\newtheorem{remark}[theorem]{Remark}
\newtheorem*{remark*}{Remark}
\newtheorem{example}[theorem]{Example}
\begin{document}

\title{On properties and numerical computation of critical points of eigencurves of 
bivariate matrix pencils}

\author*[1,2]{Bor Plestenjak}\email{bor.plestenjak@fmf.uni-lj.si}

\affil*[1]{\orgdiv{Faculty of Mathematics and Physics}, \orgname{University of Ljubljana}, 
\orgaddress{\street{Jadranska 19}, \city{1000 Ljubljana}, \country{Slovenia}}}
\affil*[2]{\orgname{Institute of Mathematics, Physics and Mechanics}, 
\orgaddress{\street{Jadranska 19}, \city{1000 Ljubljana}, \country{Slovenia}}}

\abstract{\unboldmath We investigate critical points of eigencurves of bivariate matrix pencils
$A+\lambda B +\mu C$. 
Points $(\lambda,\mu)$ for which $\det(A+\lambda B+\mu C)=0$ form
algebraic curves in $\CC^2$ and we focus on points where $\mu'(\lambda)=0$. 
Such points are referred to as zero-group-velocity (ZGV) points, following terminology from engineering applications.
We provide a general theory for the ZGV points and show that they form a subset (with equality in the generic case) of
the 2D points $(\lambda_0,\mu_0)$, where $\lambda_0$ is a multiple eigenvalue of the pencil $(A+\mu_0 C)+\lambda B$,
or, equivalently, there exist nonzero $x$ and $y$ such that 
$(A+\lambda_0 B+\mu_0 C)x=0$, $y^H(A+\lambda_0 B+\mu_0 C)=0$, and $y^HBx=0$.

We introduce three numerical methods for 
computing 2D and ZGV points. The first method calculates all 2D (ZGV) points from the eigenvalues of 
a related singular 
two-parameter eigenvalue problem. The second method employs a projected regular two-parameter eigenvalue problem 
to compute either
all eigenvalues or only a subset of eigenvalues close to 
a given target.
The third approach is a locally convergent Gauss--Newton-type method
that computes a single 2D point from an inital approximation, the later 
can be provided for all 2D points via the method of fixed relative distance
by Jarlebring, Kvaal, and Michiels. 
In our numerical examples we use these methods to compute 2D-eigenvalues, 
solve double eigenvalue problems,
determine ZGV points of a parameter-dependent quadratic eigenvalue problem, evaluate the 
distance to instability of a stable matrix, and find
critical points of eigencurves of a two-parameter Sturm-Liouville problem.} 

\keywords{%
zero-group-velocity point, 2D point, bivariate matrix pencil, singular two-parameter eigenvalue problem, 
2D-eigenvalue, double eigenvalue, distance to instability, two-parameter Sturm-Liouville problem}

\pacs[MSC Classification]{65F15, 65F50, 15A18, 15A22}

\pacs[Funding]{The work was supported by the Slovenian Research and Innovation Agency (grants N1-0154 and P1-0294).}

\maketitle

\section{Introduction}
We consider parameter-dependent linear eigenvalue problems of the form 
\begin{equation}\label{eq:ABC}
    (A+\lambda B+\mu C)x=0,
\end{equation}
where $A,B,C\in\CC^{n\times n}$, $\lambda,\mu\in\CC$ and $x\in\CC^n$ is nonzero. 
We 
are interested in properties and numerical computation of points $(\lambda_0,\mu_0)\in\CC^2$ such that $\mu$ is an analytic function of $\lambda$, $\mu_0=\mu(\lambda_0)$, and $\mu'(\lambda_0)=0$. We will show that 
such points belong to a larger set of points where
$\lambda_0$ is a multiple eigenvalue of the pencil
$(A+\mu_0 C)+\lambda B$.
Such points are related to problems in applied mathematics and engineering, e.g.,  in elastodynamics \cite{ZGV_JASA_23}, 
the double eigenvalue problem \cite{Elias_DoubleEig, MP_DoubleEig}, the distance to instability of a stable matrix \cite{Freitag_Spence_LAA_2011}, and the 2D eigenvalue problem \cite{Su_Lu_Bai_2D_ArXiV}. 

To ensure that the pencil \eqref{eq:ABC} behaves regularly with respect to both parameters we assume that the problem is biregular according to the following definition. 

\begin{definition}
A bivariate matrix pencil $A+\lambda B+\mu C$ is \emph{biregular} if for each $(\lambda_0,\mu_0)\in\CC^2$
the generalized eigenvalue problems (GEPs) 
\begin{equation}\label{eq:gep_mu}
\big((A+\lambda_0 B)+\mu C\big)x=0
\end{equation}
and
\begin{equation}\label{eq:gep_lambda}
\big((A+\mu_0 C)+\lambda B\big)x=0   
\end{equation}
are both regular, i.e., $\det(A+\lambda_0 B+\mu C)\not\equiv 0$
and $\det(A+\lambda B+\mu_0 C)\not\equiv 0$.
\end{definition}

It is easy to see that a bivariate pencil $A+\lambda B+\mu C$ is biregular if and only if the characteristic polynomial 
$p(\lambda,\mu):=\det(A+\lambda B+\mu C)$ does not have a divisor of the form $\lambda-\lambda_0$ or $\mu-\mu_0$.
A sufficient condition for biregularity is that matrices $B$ and $C$ are both nonsingular.
If $A+\lambda B+\mu C$ is biregular then for each $\lambda_0\in\CC$ there are
$n$ eigenvalues $\mu\in\CC\cup\{\infty\}$ of \eqref{eq:gep_mu}. As a result,
the set of points $(\lambda,\mu)\in\CC^2$ such that $\det(A+\lambda B +\mu C)=0$ is composed of algebraic curves 
in $\CC^2$ that are called \emph{eigencurves}. If $(\lambda_0,\mu_0)\in\CC^2$ is a point on an eigencurve such that 
$\mu_0$ is a simple eigenvalue of the GEP \eqref{eq:gep_mu},  
then we can locally parameterize $\mu$ as an analytic function of $\lambda$ such that $\mu(\lambda_0)=\mu_0$ and define $\mu'(\lambda_0)$. 

\begin{theorem}\label{thm:analytic}
Let $\mu_0\in\CC$ be a simple eigenvalue of a regular GEP $((A+\lambda_0 B)+\mu C)x=0$. Then
there exist analytic functions $\mu(\lambda)$ and $x(\lambda)\ne 0$ in a neighbourhood of $\lambda_0$ such that
$\big(A+\lambda B +\mu(\lambda)C\big)x(\lambda)=0$ and 
$\mu(\lambda_0)=\mu_0$.
\end{theorem}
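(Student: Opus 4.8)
The plan is to derive this from the holomorphic implicit function theorem applied to the characteristic polynomial, and then to manufacture the eigenvector from the adjugate matrix.

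First I would record that $p(\lambda,\mu):=\det(A+\lambda B+\mu C)$ is a polynomial in $(\lambda,\mu)$, hence an entire function of two complex variables. Regularity of the GEP $((A+\lambda_0 B)+\mu C)x=0$ says $p(\lambda_0,\cdot)\not\equiv0$, so $\mu_0$ is a root of this one–variable polynomial, and its being a \emph{simple} eigenvalue means precisely that this root is simple, i.e. $\partial p/\partial\mu\,(\lambda_0,\mu_0)\ne0$. The holomorphic implicit function theorem then supplies a neighbourhood of $\lambda_0$ and a unique analytic function $\mu(\lambda)$ on it with $\mu(\lambda_0)=\mu_0$ and $p(\lambda,\mu(\lambda))\equiv0$.

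Next I would produce $x(\lambda)$. Put $M(\lambda):=A+\lambda B+\mu(\lambda)C$; this is analytic in $\lambda$ and $\det M(\lambda)=p(\lambda,\mu(\lambda))\equiv0$. The adjugate $\mathrm{adj}\,M(\lambda)$ has polynomial entries in those of $M(\lambda)$, hence is analytic, and $M(\lambda)\,\mathrm{adj}\,M(\lambda)=\det M(\lambda)\,I=0$. The point is that $\mathrm{adj}\,M(\lambda_0)\ne0$: by the Jacobi formula $\partial p/\partial\mu\,(\lambda_0,\mu_0)=\mathrm{tr}\!\big(\mathrm{adj}(M(\lambda_0))\,C\big)$, a vanishing adjugate would contradict $\partial p/\partial\mu\,(\lambda_0,\mu_0)\ne0$; equivalently, simplicity forces $\rank M(\lambda_0)=n-1$, so $\mathrm{adj}\,M(\lambda_0)$ is a nonzero rank-one matrix. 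Picking an index $k$ with $(\mathrm{adj}\,M(\lambda_0))e_k\ne0$ and setting $x(\lambda):=(\mathrm{adj}\,M(\lambda))\,e_k$ then gives an analytic $x(\lambda)$ with $M(\lambda)x(\lambda)=0$ and $x(\lambda_0)\ne0$, hence $x(\lambda)\ne0$ on a possibly smaller neighbourhood of $\lambda_0$, which is what is required.

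An equivalent route I would keep in reserve is to apply the implicit function theorem directly to the bordered map $F(\lambda,x,\mu)=\big((A+\lambda B+\mu C)x,\ v^Hx-1\big)$, with $v$ chosen so that $v^Hx_0=1$ for a right null vector $x_0$ of $M(\lambda_0)$; its Jacobian in $(x,\mu)$ at $(\lambda_0,x_0,\mu_0)$ is the bordered matrix $\left(\begin{smallmatrix}M(\lambda_0)&Cx_0\\ v^H&0\end{smallmatrix}\right)$, whose nonsingularity is again the only step carrying real content. Indeed, the genuine obstacle in either approach is this non-degeneracy fact — that a simple eigenvalue of the regular pencil forces $\mathrm{adj}\,M(\lambda_0)\ne0$, equivalently $y_0^HCx_0\ne0$ for left and right null vectors $y_0,x_0$ of $M(\lambda_0)$, equivalently $\partial p/\partial\mu\,(\lambda_0,\mu_0)\ne0$ — and this is classical, being the standard first-order perturbation statement for simple generalized eigenvalues. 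Everything else reduces to routine bookkeeping with analytic dependence.
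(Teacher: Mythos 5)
Your proof is correct, but it takes a genuinely different route from the paper's. The paper reduces the generalized problem to a standard eigenvalue problem: since the pencil $(A+\lambda_0 B)+\mu C$ is regular, the matrix $M(\alpha)=\alpha(A+\lambda_0 B)+C$ is nonsingular for generic $\alpha$, and the substitution $\eta=\mu/(1-\alpha\mu)$ turns the GEP into $(M(\alpha)^{-1}(A+\lambda_0 B)+\eta I)x=0$, for which the analytic dependence of a simple eigenpair is cited from the literature (Greenbaum et al.). You instead prove the statement from scratch: the holomorphic implicit function theorem applied to $p(\lambda,\mu)=\det(A+\lambda B+\mu C)$, with $\partial p/\partial\mu(\lambda_0,\mu_0)\ne0$ as the content of simplicity, produces $\mu(\lambda)$; then an analytic eigenvector is manufactured as a nonzero column of $\mathrm{adj}(A+\lambda B+\mu(\lambda)C)$, the non-vanishing at $\lambda_0$ being guaranteed by Jacobi's formula (or equivalently by $\rank M(\lambda_0)=n-1$). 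Your route is self-contained and makes the eigenvector construction fully explicit, at the price of a slightly longer argument; the paper's route is shorter but leans on the cited standard-EVP result and on the Möbius-type change of eigenvalue variable, which is a mildly delicate step that you avoid entirely. Both correctly identify the same nondegeneracy ($y_0^HCx_0\ne0$, equivalently $\partial p/\partial\mu\ne0$) as the substantive hypothesis, and both are valid.
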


\begin{proof}
We know that the theorem holds for the standard eigenvalue problem where $C=I$, see, e.g.,  
\cite[Thm. 2]{Greenbaum_SIAM_Review}. Since $(A+\lambda_0 B)+\mu C$ is a regular GEP, it follows that the
matrix $M(\alpha):=\alpha(A+\lambda_0 B)+C$ is nonsingular for almost all $\alpha\in\CC$.  It is easy to see that 
$\big((A+\lambda_0 B)+\mu C\big)x=0$ if and only if 
$((A+\lambda_0 B)+\eta M(\alpha))x=0$ for $\eta=\mu/(1-\alpha \mu)$. As the theorem holds
for the standard eigenvalue problem $(M(\alpha)^{-1}(A+\lambda_0 B)+\eta I)x=0$, it thus holds for the GEP \eqref{eq:gep_mu} as well.
\end{proof}

\begin{definition}\label{def:ZGV} Let $(\lambda_0,\mu_0)\in\CC^2$ be such
that $\mu_0$ is a simple eigenvalue of a regular GEP $(A+\lambda_0 B)+\mu C)x=0$ and let 
$\mu(\lambda)$ be an analytic function in a neighbourhood of $\lambda_0$ such that
$\det(A+\lambda B +\mu(\lambda)C)=0$ and 
$\mu(\lambda_0)=\mu_0$. We say that $(\lambda_0,\mu_0)$ is a 
\emph{ZGV (zero-ground-velocity) point} of the
bivariate pencil $A+\lambda B+\mu C$ if $\mu'(\lambda_0)=0$. 
\end{definition}

The expression ZGV point comes from the study of  waves in elastodynamics,
where the angular frequency $\omega$ of a guided wave is related to the wavenumber 
$k$ via a dispersion relation $\omega(k)$. Of special interest 
are zero-group-velocity (ZGV) points $(k_*,\omega_*)$ on the dispersion curves, where 
the group velocity $c_g=\frac{\partial \omega}{\partial k}$ of a wave vanishes whereas the wave number $k_*$ remains finite, see, e.g., \cite{prada_local_2008}.

The main objective of this work is 
to clarify the algebraic structure underlying the ZGV points and 
design numerical methods that can compute all such points.
The key for this will be
the characterization
of the ZGV points as eigenvalues of an
associated singular two-parameter eigenvalue problem.

The rest of this work is structured as follows. In Section~\ref{sec:main_prop} we discuss the properties of ZGV points, introduce 2D points, and show that each ZGV point 
$(\lambda_0,\mu_0)$ is also a 2D point with $\lambda_0$ being a multiple eigenvalue of the GEP \eqref{eq:gep_lambda}. In Section~\ref{sec:aux} we review 
two-parameter eigenvalue problems (2EPs) and singular GEPs, both will be essential tools in later sections.
Section~\ref{sec:relation_2EP} contains our main result that 2D points are eigenvalues of 
a singular 2EP, which we exploit 
to derive a numerical method for computing all 2D and ZGV points. In Subsection~\ref{sec:proj_reg_2EP} we show that the 
related singular 2EP can be solved
more efficiently by projecting it into a nonsingular 2EP and applying the numerical method from \cite{HMP_SingGep2}. In Section~\ref{sec:Gauss_Newton} we provide a Gauss--Newton-type method that computes a 2D point from a good initial approximation and prove its quadratic convergence in the generic case. Approximate solutions for 2D points can be obtained by the method of fixed relative distance \cite{Elias_DoubleEig} that is presented in Section~\ref{sec:elias}. Finally, Section~\ref{sec:applications} 
explores several applications of 2D and ZGV points, illustrated with numerical examples.

\section{Main properties}\label{sec:main_prop}

The next auxiliary lemma contains well-known results (see, e.g., \cite{Gantmacher2, StewartSun_MatrixPerturbation}) on eigenvalues and eigenvectors of regular GEPs that we will use frequently in the following.

\begin{lemma}\label{lem:yBx} Let $\lambda_0\in\CC$ be a finite 
eigenvalue of a regular GEP
   $(A+\lambda B)x=0$.
\begin{enumerate}[left=0em]
\item[a)] If $\lambda_0$ is simple and 
$x_0$ and $y_0$ are the corresponding right and left eigenvector, then $y_0^HBx_0\ne 0$.
\item[b)] $\lambda_0$ is simple if and only if it is geometrically simple and 
$y_0^HBx_0\ne 0$, where 
$x_0$ and $y_0$ are the corresponding right and left eigenvector.
\item[c)] If algebraic multiplicity of $\lambda_0$ is larger than the geometric multiplicity, then there exist nonzero vectors
$x_0$ (an eigenvector) and $z_0$ (a generalized eigenvector of degree two), such that \[(A+\lambda_0 B)x_0=0\quad\textrm{and}\quad (A+\lambda_0B)z_0+Bx_0=0.\]
\end{enumerate}
\end{lemma}

The following lemma gives necessary conditions for a ZGV point.

\begin{lemma}\label{lem:2D}
If $(\lambda_0,\mu_0)\in\CC^2$ is a ZGV point of a bivariate pencil $A+\lambda B +\mu C$ and $x_0$ and $y_0$ are the corresponding right and left eigenvector,
then 
\begin{equation}\label{cond:2D}
    y_0^HBx_0=0
\end{equation}
and
\begin{equation}\label{cond_GZV_yCx}
    y_0^HCx_0\ne 0.
\end{equation}
\end{lemma}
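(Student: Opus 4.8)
The plan is to differentiate the eigenvalue relation along the eigencurve and evaluate at the critical point for \eqref{cond:2D}, and to invoke simplicity of $\mu_0$ for \eqref{cond_GZV_yCx}. Since $\mu_0$ is a simple eigenvalue of the regular GEP \eqref{eq:gep_mu}, Theorem~\ref{thm:analytic} supplies analytic functions $\mu(\lambda)$ and $x(\lambda)\neq 0$ in a neighbourhood of $\lambda_0$ with $\big(A+\lambda B+\mu(\lambda)C\big)x(\lambda)=0$ and $\mu(\lambda_0)=\mu_0$; because the eigenvector of a simple eigenvalue of a regular GEP is unique up to scaling, we may take $x(\lambda_0)=x_0$. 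Differentiating the identity with respect to $\lambda$ gives
\[
\big(B+\mu'(\lambda)C\big)x(\lambda)+\big(A+\lambda B+\mu(\lambda)C\big)x'(\lambda)=0 ,
\]
and evaluating at $\lambda=\lambda_0$, using $\mu'(\lambda_0)=0$, yields $Bx_0+(A+\lambda_0 B+\mu_0 C)x'(\lambda_0)=0$. Multiplying on the left by $y_0^H$ and using $y_0^H(A+\lambda_0 B+\mu_0 C)=0$ kills the second term, leaving $y_0^H B x_0=0$, which is \eqref{cond:2D}.

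For \eqref{cond_GZV_yCx} I would use the classical fact that for a simple eigenvalue of a \emph{regular} pencil the weight $y_0^HCx_0$ cannot vanish. To keep this self-contained one can reuse the Möbius reduction from the proof of Theorem~\ref{thm:analytic}: pick $\alpha\in\CC$ with $M(\alpha):=\alpha(A+\lambda_0 B)+C$ nonsingular and $1-\alpha\mu_0\neq 0$, so that $\mu_0$ corresponds to the eigenvalue $\eta_0=\mu_0/(1-\alpha\mu_0)$ of the standard eigenvalue problem for $M(\alpha)^{-1}(A+\lambda_0 B)$, with right eigenvector $x_0$ and left eigenvector proportional to $y_0^HM(\alpha)=(1-\alpha\mu_0)\,y_0^HC$. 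The Möbius map $\mu\mapsto\mu/(1-\alpha\mu)$ preserves simplicity, so $\eta_0$ is simple, whence the left and right eigenvectors of $M(\alpha)^{-1}(A+\lambda_0 B)$ have nonzero inner product; this reads $(1-\alpha\mu_0)\,y_0^HCx_0\neq0$, and therefore $y_0^HCx_0\neq0$.

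I do not anticipate a genuine obstacle: the statement is essentially first-order perturbation theory. The only points needing care are (i) that $x_0,y_0$ in the lemma are defined only up to scaling, so identifying $x_0$ with $x(\lambda_0)$ is harmless; and (ii) that the nonvanishing-weight property in \eqref{cond_GZV_yCx} genuinely uses regularity of the GEP \eqref{eq:gep_mu} — it can fail for singular pencils — which is why I would route the argument through the reduction to a standard eigenvalue problem rather than through a bare derivative-of-determinant computation.
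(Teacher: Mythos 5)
Your argument for \eqref{cond:2D} is identical to the paper's: invoke Theorem~\ref{thm:analytic}, differentiate $\big(A+\lambda B+\mu(\lambda)C\big)x(\lambda)=0$, set $\mu'(\lambda_0)=0$, and hit the result with $y_0^H$. For \eqref{cond_GZV_yCx} the paper simply asserts that it "follows from the demand that $\mu_0$ is a simple eigenvalue," whereas you spell out that assertion by reusing the Möbius reduction $M(\alpha)=\alpha(A+\lambda_0 B)+C$ from Theorem~\ref{thm:analytic} and reducing to the classical fact that the left and right eigenvectors of a simple eigenvalue of a standard eigenvalue problem cannot be orthogonal; the computations ($y_0^HM(\alpha)=(1-\alpha\mu_0)y_0^HC$, simplicity preserved since the Möbius map is a local diffeomorphism with $1-\alpha\mu_0\ne 0$) are correct. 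So this is the same route as the paper, just with the well-known nondegeneracy condition for a simple GEP eigenvalue made explicit rather than cited as folklore.
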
 
\begin{proof}
Let $\mu(\lambda)$ and $x(\lambda)$ be analytic functions from Theorem \ref{thm:analytic} such that
$\big(A+\lambda B +\mu(\lambda)C\big)x(\lambda)=0$, $x(\lambda_0)=x_0$, and  
$\mu(\lambda_0)=\mu_0$.
By differentiating we get 
\begin{equation}\label{eq:der}
    \big(A+\lambda B+\mu(\lambda) C\big)x'(\lambda) + \big(B +\mu'(\lambda) C\big)x(\lambda)=0.
\end{equation}
Multiplying \eqref{eq:der} from the left by $y_0^H$ and taking into account that at a ZGV point $u'(\lambda_0)=0$ yields the
condition (\ref{cond:2D}). The condition \eqref{cond_GZV_yCx} follows from Lemma \ref{lem:yBx} from the demand that $\mu_0$ is a simple
eigenvalue of the GEP \eqref{eq:gep_mu}.
\end{proof}

Based on the condition \eqref{cond:2D} from Lemma \ref{lem:2D} we introduce a larger set of 2D points for the bivariate pencil $A+\lambda B+\mu C$. The name
comes from a relation to the 2D-eigenvalue problem \cite{LuSuBai_SIMAX} that we will explore later in 
Subsection~\ref{sec:2D}.
We remark that for the symmetric matrices and a simple eigenvalue $\mu_0$ the necessary condition \eqref{cond:2D} for $(\lambda_0,\mu_0)$ to be a critical point on an eigencurve was noticed in \cite{Overton_Womersley_95}.

\begin{definition}\label{def:2D}
We say that $(\lambda_0,\mu_0)\in\CC^2$ is a \emph{2D point} of a bivariate pencil $A+\lambda B+\mu C$ if there exist nonzero vectors
$x_0$ and $y_0$ such that
\begin{equation}\label{eq:def_2D}
    \begin{split}
        (A+\lambda_0 B+\mu_0 C)x_0&=0,\\
        y_0^H(A+\lambda_0 B +\mu_0 C_0)&=0,\\
        y_0^HBx_0 & = 0.
    \end{split}
\end{equation}
\end{definition}

It follows from Lemma \ref{lem:2D} that ZGV points are a subset of 2D points. In the following we will identify which 2D points are also ZGV points.

\begin{lemma}\label{lem:dbl} Let the GEP \eqref{eq:gep_lambda}
be regular for $\mu_0\in\CC$. Then
$(\lambda_0,\mu_0)$ is a 2D point of the  bivariate pencil $A+\lambda B+\mu C$ if and only if
$\lambda_0\in\CC$ is a multiple eigenvalue of (\ref{eq:gep_lambda}).
\end{lemma}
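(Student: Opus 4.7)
The plan is to reduce the lemma to the classical characterization of a (non)simple eigenvalue of a regular matrix pencil. Setting $M := A+\mu_0 C$ and $N := B$, the three defining conditions (\ref{eq:def_2D}) of a 2D point become
\begin{equation*}
(M+\lambda_0 N)x_0=0, \quad y_0^H(M+\lambda_0 N)=0, \quad y_0^H N x_0=0.
\end{equation*}
So it suffices to show: for a regular pencil $M+\lambda N$, an eigenvalue $\lambda_0\in\CC$ has algebraic multiplicity at least two if and only if there exist nonzero right and left eigenvectors $x_0,y_0$ at $\lambda_0$ with $y_0^H N x_0=0$. The eigenvalue condition in (\ref{eq:def_2D}) already guarantees that $\lambda_0$ is \emph{some} eigenvalue of $M+\lambda N$, so only its multiplicity is in question.

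For the ``if'' direction I would split into two cases. If the geometric multiplicity of $\lambda_0$ is at least two, then the left null space of $M+\lambda_0 N$ has dimension at least two; picking any right eigenvector $x_0$, the linear condition $y^H N x_0 = 0$ cuts out at worst a hyperplane in this left null space and therefore admits a nonzero solution $y_0$. If $\lambda_0$ is geometrically simple but algebraically multiple, the Weierstrass canonical form of $M+\lambda N$ yields a Jordan chain of length at least two, i.e.\ vectors $x_0,x_1$ with $(M+\lambda_0 N)x_0=0$ and $(M+\lambda_0 N)x_1=-N x_0$. Then for the (essentially unique) left eigenvector $y_0$ one computes
\begin{equation*}
y_0^H N x_0 = -y_0^H (M+\lambda_0 N) x_1 = 0,
\end{equation*}
which gives the required pair.

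For the ``only if'' direction I would argue by contrapositive. Suppose $\lambda_0$ is a simple eigenvalue of $M+\lambda N$. Then $p(\lambda):=\det(M+\lambda N)$ has a simple root at $\lambda_0$, so $p'(\lambda_0)\ne 0$. By Jacobi's formula applied to $M+\lambda N$ together with the fact that at a simple eigenvalue $M+\lambda_0 N$ has rank $n-1$ and its adjugate equals $x_0 y_0^H$ up to a nonzero scalar, one obtains $p'(\lambda_0)=c\, y_0^H N x_0$ with $c\ne 0$ for the essentially unique right and left eigenvectors $x_0,y_0$. Consequently $y_0^H N x_0\ne 0$, contradicting the 2D-point condition. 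Hence $\lambda_0$ must be a multiple eigenvalue of (\ref{eq:gep_lambda}).

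The main obstacle is not conceptual but one of bookkeeping: one has to handle the defective case cleanly in the ``if'' direction and invoke the correct ``derivative of determinant'' identity in the ``only if'' direction. Both facts are standard in the theory of regular matrix pencils and will be consistent with the 2EP framework reviewed in Section~\ref{sec:aux}, so I would cite that background rather than reprove these identities in the body of the lemma.
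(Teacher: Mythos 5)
Your proof is correct and follows essentially the same route as the paper's: both directions reduce to the regular pencil $(A+\mu_0 C)+\lambda B$, the forward direction argues that at a simple eigenvalue $y_0^H B x_0\ne 0$ (the paper merely asserts this is ``easy to see,'' while you supply the standard Jacobi/adjugate justification), and the backward direction splits into a dimension-counting case and a Jordan-chain case. The only cosmetic difference is your case split (geometric multiplicity $\ge 2$ versus geometrically simple) instead of the paper's semisimple versus defective; both are complete and use the same two arguments.
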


\begin{proof}
Suppose that $\lambda_0$ is a simple eigenvalue of (\ref{eq:gep_lambda}) and $x_0$ and $y_0$ are the corresponding right and left eigenvector.  
Then  $y_0^HBx_0\ne 0$ by Lemma \ref{lem:yBx}, which contradicts the condition from Definition \ref{def:2D}.

For a proof in the other direction, let $\lambda_0$ be a multiple eigenvalue of (\ref{eq:gep_lambda}). We consider two options.
\begin{enumerate}[left=0em]
    \item[a)] $\lambda_0$ is semisimple, i.e., the geometric multiplicity of $\lambda_0$ equals the algebraic multiplicity. In this case the left and the right eigenvector belong to subspaces of dimension at least two and there exist nonzero $x_0\in\ker(A+\mu_0 C+\lambda_0 B)$ and
    $y_0\in\ker((A+\mu_0 C+\lambda_0 B)^H)$ such that $y_0^HBx_0=0$. 
    \item[b)] The algebraic multiplicity of $\lambda_0$ is larger than the geometric multiplicity. Then there exist nonzero vectors $x_0$ (an eigenvector) and $z$ (a generalized eigenvector of degree two)  such that 
\begin{equation}\label{eq:eig_root}
    \begin{split}
        (A+\lambda_0 B+\mu_0 C)x_0&=0,\\
        (A+\lambda_0 B +\mu_0 C_0)z+Bx_0&=0.
    \end{split}
\end{equation}
By multiplying the second equation of \eqref{eq:eig_root} by the left eigenvector $y_0$, we get $y_0^HBx_0=0$.    
\end{enumerate}
In both cases we get vectors $x_0$ and $y_0$ such that \eqref{eq:def_2D} holds and $(\lambda_0,\mu_0)$ is a 2D point.
\end{proof}

\begin{remark}\label{rem:abc_2D_cases} Let $(\lambda_0,\mu_0)\in\CC^2$ be a 2D point of a biregular bivariate pencil $A+\lambda B+\mu C$. If we
denote by $a_m$ and $g_m$ the algebraic and geometric multiplicity of $\lambda_0$ as an eigenvalue of \eqref{eq:gep_lambda}, respectively, then the following
options are possible:
\begin{enumerate}[left=0em]
    \item[a)] $a_m\ge 2$, $g_m=1$, and $y_0^HCx_0\ne 0$, 
    \item[b)] $a_m\ge 2$, $g_m=1$, and $y_0^HCx_0=0$, 
    \item[c)] $a_m>g_m\ge 2$,
    \item[d)] $a_m=g_m\ge 2$,
\end{enumerate}
where $x_0$ and $y_0$ are the right and the left
eigenvector for the eigenvalue $\lambda_0$ of \eqref{eq:gep_lambda}. The generic situation is a). In case d) $\lambda_0$ is a semisimple eigenvalue
of $\eqref{eq:gep_lambda}$, and a non-semisimple eigenvalue in cases a), b), c).
\end{remark}

Throughout the paper we use the
term generic as follows: a set $\mathcal A\subseteq\mathbb C^m$ is \emph{algebraic}
if it is the common zero set of finitely many complex polynomials in $m$ variables. A set 
$\mathcal U$ is called \emph{generic} if its complement is contained in a proper algebraic set. Similarly, 
a property is generic in $\mathcal U\subseteq\mathbb C^m$, if there exists
an algebraic set $\mathcal A\subseteq\mathbb C^m$ such that $\mathcal U\not\subseteq \mathcal A$
and the property holds on $\mathcal U\backslash (\mathcal A\cap \mathcal U)$.

In line with Remark~\ref{rem:abc_2D_cases} we classify 2D points into points of type a), b), c), and d). We can now show that
ZGV points of a biregular bilinear pencil 
are exactly 2D points of type a).

\begin{theorem}\label{thm:main_2D_ZGV}
Let $(\lambda_0,\mu_0)\in\CC^2$ be a 2D point of a biregular bivariate pencil $A+\lambda B+\mu C$. 
Then $(\lambda_0,\mu_0)$ is a ZGV point if and only if the
geometric multiplicity of $\lambda_0$ as an eigenvalue of \eqref{eq:gep_lambda} is one and
$y_0^HCx_0\ne 0$, where $x_0$ and $y_0$ are the right and the left
eigenvector.
\end{theorem}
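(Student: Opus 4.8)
The plan is to prove both implications by analyzing the local behavior of the eigencurve $\mu(\lambda)$ near $(\lambda_0,\mu_0)$, distinguishing the cases a)--d) of Remark~\ref{rem:abc_2D_cases}. By Lemma~\ref{lem:dbl}, being a 2D point is equivalent to $\lambda_0$ being a multiple eigenvalue of the GEP \eqref{eq:gep_lambda}, and the ZGV property is defined in Definition~\ref{def:ZGV} in terms of a \emph{simple} eigenvalue $\mu_0$ of the GEP \eqref{eq:gep_mu}; so the first order of business is to relate the multiplicity of $\mu_0$ in \eqref{eq:gep_mu} to the four cases. I would show that in cases c) and d) (where the geometric multiplicity $g_m$ of $\lambda_0$ is at least two), $\mu_0$ cannot be a simple eigenvalue of \eqref{eq:gep_mu}: there one finds two independent right eigenvectors $x_0, \tilde x_0$ for $\lambda_0$ in \eqref{eq:gep_lambda}, and since both satisfy $(A+\lambda_0 B+\mu_0 C)x=0$, some nonzero combination lies in $\ker(A+\mu_0 C+\lambda_0 B)$ and is orthogonal to the left null space, forcing $g_m\ge 2$ for $\mu_0$ in \eqref{eq:gep_mu} as well (or at least non-simplicity). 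Hence in cases c), d) the point $(\lambda_0,\mu_0)$ is not a ZGV point, simply because the defining analytic parameterization does not exist; this already disposes of the ``only if'' direction for those cases.

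Next I would handle cases a) and b), where $g_m=1$ so that $\mu_0$ is at least a candidate for a simple eigenvalue of \eqref{eq:gep_mu}; one should first confirm that biregularity plus $g_m=1$ indeed makes $\mu_0$ simple in \eqref{eq:gep_mu} (using that a left/right eigenvector pair $y_0,x_0$ with $y_0^HBx_0=0$ governs the derivative there, analogously to Lemma~\ref{lem:2D}, and that $y_0^HCx_0\ne 0$ is exactly condition~b)-vs-a)). In case a), where $y_0^HCx_0\ne 0$, I would invoke Theorem~\ref{thm:analytic} to get the analytic $\mu(\lambda)$, differentiate the relation $(A+\lambda B+\mu(\lambda)C)x(\lambda)=0$ once (equation \eqref{eq:der}), multiply on the left by $y_0^H$, and read off $\mu'(\lambda_0)(y_0^HCx_0) = -\,y_0^HBx_0 = 0$; since $y_0^HCx_0\ne 0$ this gives $\mu'(\lambda_0)=0$, i.e.\ $(\lambda_0,\mu_0)$ is a ZGV point. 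Conversely, in case b) with $y_0^HCx_0=0$, the same differentiated identity together with the existence of the root vector $z$ from \eqref{eq:eig_root} should show the curve is not smooth of the expected form at $(\lambda_0,\mu_0)$ — more precisely, one argues that a Jordan chain of length $\ge 2$ in \emph{both} parameters produces a branch point (a square-root-type singularity) of the eigencurve, so there is no analytic $\mu(\lambda)$ with $\mu'(\lambda_0)=0$; I expect the cleanest route is to note that $\mu_0$ fails to be a simple eigenvalue of \eqref{eq:gep_mu} in case b) as well (the chain condition $y_0^HCx_0=0$ is precisely the obstruction), so Definition~\ref{def:ZGV} does not even apply, mirroring the treatment of c), d).

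Putting the pieces together: $(\lambda_0,\mu_0)$ is a ZGV point $\iff$ Definition~\ref{def:ZGV} applies (i.e.\ $\mu_0$ is a simple eigenvalue of \eqref{eq:gep_mu}) and $\mu'(\lambda_0)=0$ $\iff$ we are in case a) $\iff$ $g_m=1$ and $y_0^HCx_0\ne 0$. The main obstacle I anticipate is the bookkeeping around multiplicities: carefully establishing the equivalence ``$g_m(\lambda_0\text{ in }\eqref{eq:gep_lambda})=1$ and $y_0^HCx_0\ne 0$'' $\iff$ ``$\mu_0$ is a simple eigenvalue of \eqref{eq:gep_mu}'' under biregularity, since this is what connects the hypotheses of Definition~\ref{def:2D}/Lemma~\ref{lem:dbl} (phrased via \eqref{eq:gep_lambda}) to the hypotheses of Definition~\ref{def:ZGV} (phrased via \eqref{eq:gep_mu}). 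Once that symmetric correspondence between the two pencils is pinned down, the derivative computation is just the one already performed in Lemma~\ref{lem:2D}.
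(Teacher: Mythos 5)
Your proposal is correct and follows essentially the same route as the paper: rule out cases c) and d) by observing that the geometric multiplicities of $\lambda_0$ in \eqref{eq:gep_lambda} and of $\mu_0$ in \eqref{eq:gep_mu} coincide (both equal $\dim\ker(A+\lambda_0 B+\mu_0 C)$), rule out case b) because $y_0^HCx_0=0$ forces $\mu_0$ to be algebraically non-simple so Definition~\ref{def:ZGV} does not apply, and in case a) use Theorem~\ref{thm:analytic} and the differentiated identity \eqref{eq:der} to read off $\mu'(\lambda_0)(y_0^HCx_0)=-y_0^HBx_0=0$. The only cosmetic remark is that your multiplicity argument is phrased more elaborately than necessary (``orthogonal to the left null space''): the two GEPs share literally the same null space $\ker(A+\lambda_0 B+\mu_0 C)$, which gives the equality of geometric multiplicities directly.
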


\begin{proof} Let $(\lambda_0,\mu_0)$ be a ZGV point. As the geometric multiplicity of $\lambda_0$ as an eigenvalue of \eqref{eq:gep_lambda} is equal to the
geometric multiplicity of $\mu_0$ as an eigenvalue of \eqref{eq:gep_mu}, cases c) and d) from Remark~\ref{rem:abc_2D_cases} are not possible. It follows from
the condition \eqref{cond_GZV_yCx} from Lemma~\ref{lem:2D} that $(\lambda_0,\mu_0)$ is a 2D point of type a).

For a proof in the other direction, let $(\lambda_0,\mu_0)$ be a 2D point of type a). Using the same argument as above we see that the geometric
multiplicity of $\mu_0$ as an eigenvalue of \eqref{eq:gep_mu} is one. The eigenvectors $x_0$ and $y_0$ are then uniquely defined up to a multiplication by a scalar and
it follows from $y_0^HCx_0\ne 0$ that $\mu_0$ is algebraically simple. Therefore, there exist
analytic functions $\mu(\lambda)$ and $x(\lambda)$ from Theorem \ref{thm:analytic} such that
$\big(A+\lambda B +\mu(\lambda)C\big)x(\lambda)=0$, $x(\lambda_0)=x_0$, and  
$\mu(\lambda_0)=\mu_0$.
If we differentiate the equation as in the proof of Lemma~\ref{lem:2D} and multiply the derivative \eqref{eq:der} from the left by $y_0^H$, we obtain 
\[y_0^H(B+\mu'(\lambda_0)C)x_0=0.\] 
Since $(\lambda_0,\mu_0)$ is a 2D point, $y_0^HBx_0=0$, and, since $y_0^HCx_0\ne 0$, it follows that $\mu'(\lambda_0)=0$ and 
$(\lambda_0,\mu_0)$ is a ZGV point.
\end{proof}

We will use the next result 
in numerical methods to detect if a computed 2D point is a ZGV point.

\begin{corollary}\label{cor:2D_to_ZGV}
Let $(\lambda_0,\mu_0)\in\CC^2$ be a 2D point of a biregular bivariate pencil $A+\lambda B+\mu C$. 
Then $(\lambda_0,\mu_0)$ is a ZGV point if and only if 
$\mu_0$ is a simple eigenvalue of the GEP \eqref{eq:gep_mu}.
\end{corollary}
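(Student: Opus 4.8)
The plan is to deduce the corollary from Theorem~\ref{thm:main_2D_ZGV} by re-expressing its two conditions — that the geometric multiplicity of $\lambda_0$ as an eigenvalue of \eqref{eq:gep_lambda} equals one and that $y_0^HCx_0\ne0$ — as the single condition that $\mu_0$ is a simple eigenvalue of \eqref{eq:gep_mu}. The key elementary observation is that the geometric multiplicity of $\lambda_0$ for \eqref{eq:gep_lambda} and the geometric multiplicity of $\mu_0$ for \eqref{eq:gep_mu} are both equal to $\dim\ker(A+\lambda_0B+\mu_0C)$, and hence coincide; consequently, when this common value is one, the right and left eigenvectors $x_0$ and $y_0$ appearing in Theorem~\ref{thm:main_2D_ZGV} are, up to scaling, also the right and left eigenvectors of $\mu_0$ for the pencil $(A+\lambda_0B)+\mu C$.

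Next I would recall the standard fact that, for a regular GEP $\big((A+\lambda_0B)+\mu C\big)x=0$, an eigenvalue $\mu_0$ is (algebraically) simple if and only if its geometric multiplicity is one and $y_0^HCx_0\ne0$ for the corresponding eigenvectors. One implication is the differentiation argument already used in the proofs of Lemma~\ref{lem:2D} and Theorem~\ref{thm:main_2D_ZGV}; for the converse, if $\mu_0$ has geometric multiplicity one but is not simple, a root vector $z$ of height two exists with $\big((A+\lambda_0B)+\mu_0C\big)z+Cx_0=0$, and multiplying by $y_0^H$ gives $y_0^HCx_0=0$. Combining this characterization with the multiplicity identity above, "$\mu_0$ is a simple eigenvalue of \eqref{eq:gep_mu}" is exactly "the geometric multiplicity of $\lambda_0$ for \eqref{eq:gep_lambda} is one and $y_0^HCx_0\ne0$", which by Theorem~\ref{thm:main_2D_ZGV} is equivalent to $(\lambda_0,\mu_0)$ being a ZGV point.

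I do not expect a real obstacle; the only care needed is to check that the vectors named in Theorem~\ref{thm:main_2D_ZGV} (attached to the pencil $(A+\mu_0C)+\lambda B$) are literally those used when speaking of simplicity of $\mu_0$ for $(A+\lambda_0B)+\mu C$ — which holds because both share the kernel $\ker(A+\lambda_0B+\mu_0C)$ — and to note that in cases c) and d) of Remark~\ref{rem:abc_2D_cases} the common geometric multiplicity is at least two, so both "$\mu_0$ simple" and "ZGV point" fail and the equivalence is trivially true.
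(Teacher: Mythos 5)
Your argument is correct and follows essentially the same route as the paper, reducing the claim to Theorem~\ref{thm:main_2D_ZGV} via the observation that the geometric multiplicities of $\lambda_0$ in \eqref{eq:gep_lambda} and of $\mu_0$ in \eqref{eq:gep_mu} coincide, both being $\dim\ker(A+\lambda_0 B+\mu_0 C)$. The one difference is cosmetic: for the direction ``ZGV $\Rightarrow$ $\mu_0$ simple'' the paper simply invokes Definition~\ref{def:ZGV}, which already demands that $\mu_0$ be a simple eigenvalue of \eqref{eq:gep_mu}, whereas you re-derive it from Theorem~\ref{thm:main_2D_ZGV} together with the standard characterization of algebraic simplicity of a GEP eigenvalue by geometric simplicity plus $y_0^HCx_0\ne 0$ --- a valid but slightly longer detour.
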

\begin{proof} 
If $\mu_0$ is a simple eigenvalue of \eqref{eq:gep_mu} and 
$x_0$ and $y_0$ are the right and the left eigenvector, then $y_0^HCx_0\ne 0$. Another thing that 
follows from $\mu_0$ being simple is that the geometric multiplicity of $\lambda_0$ as an eigenvalue of \eqref{eq:gep_lambda} is one. Thus, $(\lambda_0,\mu_0)$ is 
a 2D point of type a) and it follows from 
Theorem~\ref{thm:main_2D_ZGV} that $(\lambda_0,\mu_0)$ is a ZGV point.

A proof in the other direction  follows directly from the definition of the ZGV point, where we require that $\mu_0$ is a simple eigenvalue of \eqref{eq:gep_mu}.
\end{proof}

The following two small examples illustrate the definition of ZGV and 2D points. 

\begin{example}\rm\label{exa:one}If we take
\begin{equation}\label{ex:mat_exaone}
A=\left[\begin{matrix}1 & 2  &3 & 0\cr 2 & 0 & 1& 0\cr 3 & 1 & 1 & 0 \cr 0& 0& 0& -3\end{matrix}\right],\quad
  B=\left[\begin{matrix}1 & 0  &1 & 0\cr 0 & 1 & 1& 0\cr 1 & 1 & 0 & 0 \cr 0& 0& 0& -3\end{matrix}\right],\quad
  C=\left[\begin{matrix}2 & 1  &0 & 0\cr 1 & 3 & 0& 0\cr 0 & 0 & 1 & 0 \cr 0& 0& 0&  1\end{matrix}\right],
\end{equation}  
then the matrix $A+\lambda_0 B$ is symmetric for $\lambda_0\in\RR$ and, since $C$ is symmetric positive definite, 
all eigenvalues of $(A+\lambda_0 B)+\mu C$ are real. The left hand side of Figure \ref{fig:one} shows
the real eigencurves of $A+\lambda B +\mu C$ with the four real ZGV points
$(-2.2645, -1.3475)$, $(-1.8172,-0.17299)$, $(0.28896, 0.28248)$, and $(0.38688, 1.7975)$. 
In addition, the problem has two complex ZGV points $(-10.4081\pm 3.8258 i,7.7647\mp 2.9511 i)$. 
There are additional three 2D points of type d) that are not ZGV points. These are the intersections of eigencurves at $(-1.5330,-1.5991)$, $(-1,0)$, and $(-0.3565,1.9305)$.

\begin{figure}[h]
    \centering
    \includegraphics[width=6cm]{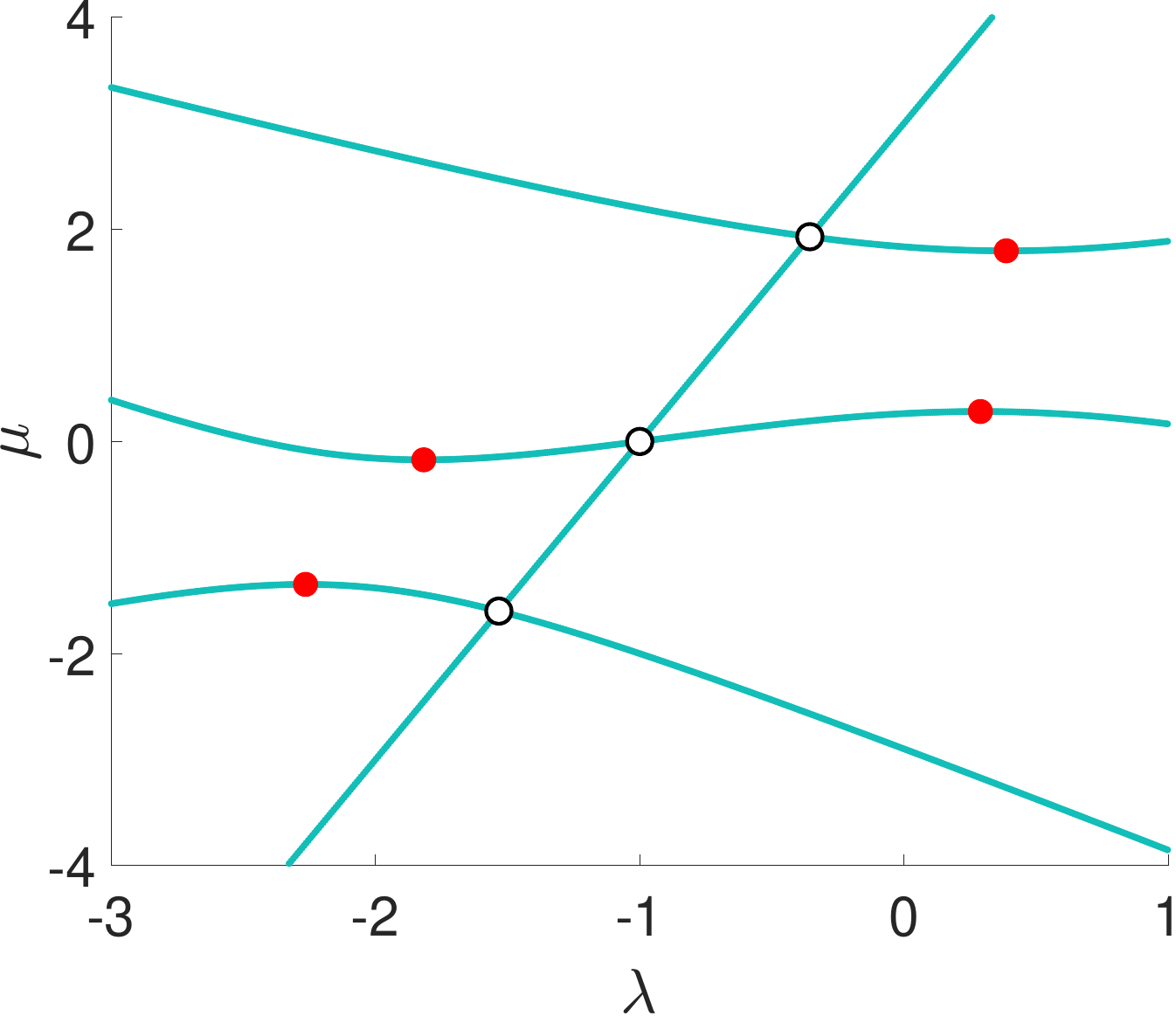}\quad
    \includegraphics[width=6cm]{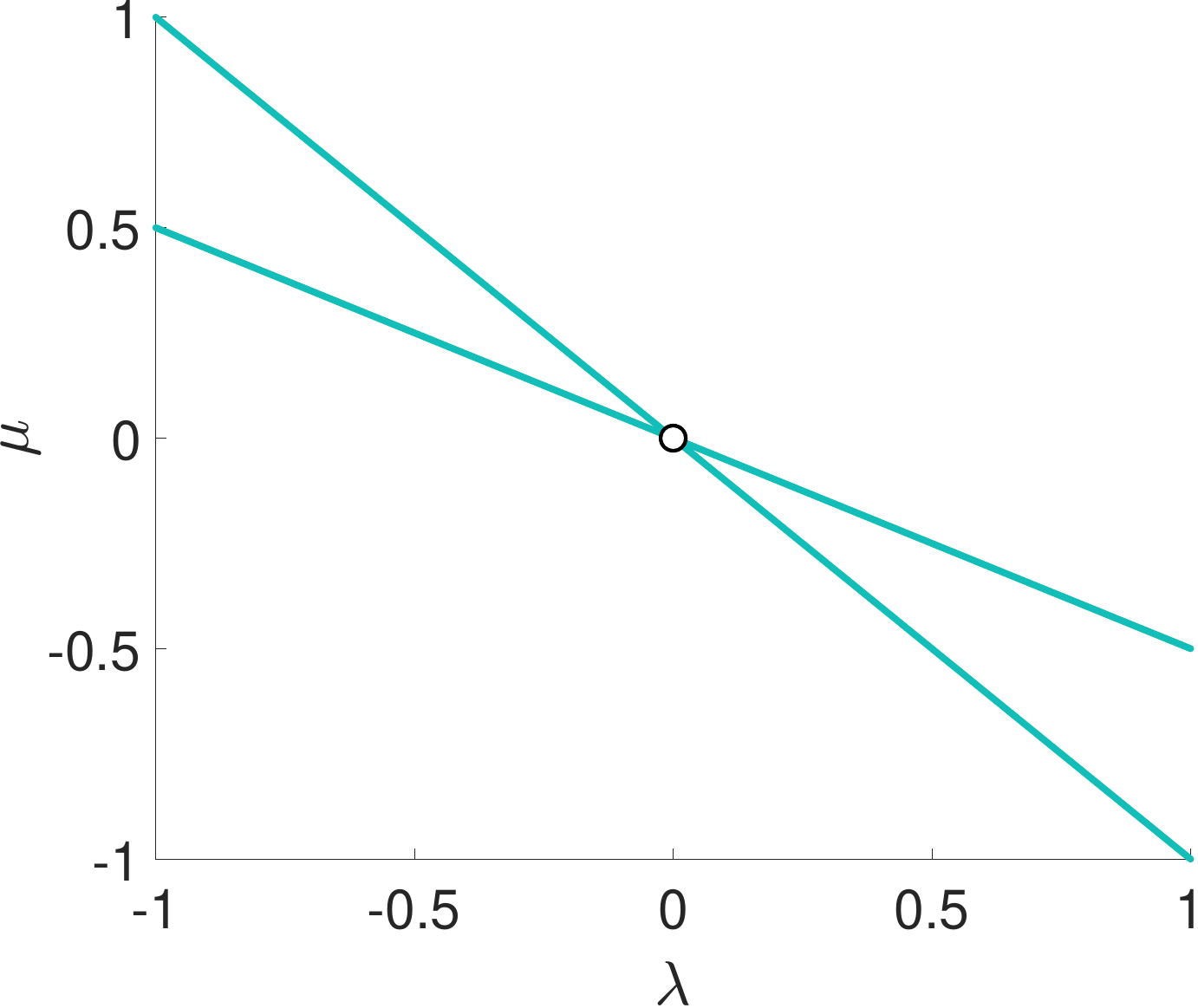}
    \caption{Real eigencurves with ZGV (red) and 2D (white) points of Examples \ref{exa:one} (left) and \ref{exa:three} (right).}
    \label{fig:one}
\end{figure}

\end{example}

\begin{example}\rm\label{exa:three}
If we take 
$$A+\lambda B+\mu C = \left[\begin{matrix}\lambda+\mu & 1  \cr 0 & \lambda + 2\mu\end{matrix}\right],$$  
then $(0,0)$ is the only 2D point. It is of type b) and the pencil does not have any ZGV points.
For the corresponding eigencurves see the right hand side of Figure \ref{fig:one}. Note that although
the 2D point is the intersection of eigencurves like in the previous example, it is of a different type than the 2D points from Example \ref{exa:one}.
\end{example}
\medskip

It follows from Corollary~\ref{cor:2D_to_ZGV} that ZGV points are exactly the subset of 2D points of 
$A+\lambda B+\mu C$ such that $\mu_0$ is a simple eigenvalue of $\big((A+\lambda_0B)+\mu C\big)x=0$. 
Based on Theorem~\ref{thm:main_2D_ZGV} and Corollary~\ref{cor:2D_to_ZGV} we can build criteria 
to identify 2D points that are also ZGV points, which means that all numerical methods for the 
computation of 2D points can be used to compute ZGV points as well. 

The problem of finding 2D points is by Lemma~\ref{lem:dbl} equivalent to 
the problem of finding 
values $\mu_0$ such that the GEP \eqref{eq:gep_lambda} has a multiple eigenvalue $\lambda_0$. Generically, see, e.g., \cite{MP_DoubleEig}, if
the GEP \eqref{eq:gep_lambda} has a multiple eigenvalue $\lambda_0$, then this eigenvalue is a derogatory double eigenvalue,
i.e., the algebraic multiplicity of $\lambda_0$ is two and the geometric multiplicity is one, thus
it is a 2D point of type a) or b). In such case, 
a 2D point is a ZGV point if and only if it satisfies the condition \eqref{cond_GZV_yCx}. As generically this condition is always satisfied,
the 2D points and ZGV points agree in the generic case, see also Theorem \ref{thm:appendix}.

We know that an $n\times n$ generic bivariate matrix 
pencil $A+\lambda B+\mu C$ has $n(n-1)$ 2D points \cite{MP_DoubleEig}, as this is the number of values $\mu$ such that a matrix $B^{-1}A+\mu B^{-1}C$ has a double eigenvalue (note that a generic $B$ is nonsingular). It follows that a biregular bivariate matrix pencil has $n(n-1)$ or less ZGV points, with an equality in the generic case when 2D points and ZGV points agree.

The numerical methods that we will present later are based on the theory on two-parameter eigenvalue problems and singular generalized eigenvalue problems that we introduce in the next section.

\section{Auxiliary results}\label{sec:aux}

A \emph{two-parameter eigenvalue problem (2EP)} has the form
\begin{equation}\label{problem}
\begin{split}
  W_1(\lambda,\mu)x_1&:=(A_1+\lambda B_1+\mu C_1)x_1=0,\\
  W_2(\lambda,\mu)x_2&:=(A_2+\lambda B_2+\mu C_2)x_2=0,
\end{split}%\tag{W}
\end{equation}
where $A_i,B_i,C_i\in\CC^{n_i\times n_i}$, 
$x_1,x_2$ are nonzero vectors and $\lambda,\mu\in\CC$.
If $(\lambda,\mu)$ and nonzero $x_1,x_2$
satisfy
(\ref{problem}), then  $(\lambda,\mu)$ is an eigenvalue and $x_1\otimes x_2$ is the corresponding
\emph{(right) eigenvector}. Similarly, $y_1\otimes y_2$ is the
corresponding \emph{left eigenvector} if $y_i\ne 0$ and
$y_i^HW_i(\lambda,\mu)=0$ for $i=1,2$.

Note that (\ref{eq:ABC}) has a form of one equation from  
(\ref{problem}). In the generic case its solutions $(\lambda,\mu)$, 
where $\det(A+\lambda B+\mu C)=0$,
lie on curves in $\CC^2$ and there
are infinitely many such points.
On the other hand, eigenvalues of a 2EP (\ref{problem})
are the intersections of curves 
\begin{equation}\label{detproblem}
\begin{split}
  p_1(\lambda,\mu)&:=\det(W_1(\lambda,\mu))=0,\\
  p_2(\lambda,\mu)&:=\det(W_2(\lambda,\mu))=0.
\end{split}
\end{equation}
In the generic case the curves $p_1(\lambda,\mu)=0$ and 
$p_2(\lambda,\mu)=0$ do not have a nontrivial common factor and by B\'ezout's theorem (see, e.g., \cite{Shafarevich1})
it follows that (\ref{problem}) has $n_1n_2$ eigenvalues.

A 2EP (\ref{problem}) is related
to a coupled pair of GEPs
\begin{equation}\label{drugi}
\begin{split}
  \Delta_1 z&=\lambda \Delta_0 z,\\
  \Delta_2 z&=\mu \Delta_0 z,
\end{split}%\tag{$\Delta$}
\end{equation}
where $n_1n_2\times n_1n_2$ matrices
\begin{equation}\label{Deltaik1}
\begin{split}
  \Delta_0&=B_1\otimes C_2-C_1\otimes B_2,\\
  \Delta_1&=C_1\otimes A_2-A_1\otimes C_2,\\
  \Delta_2&=A_1\otimes B_2-B_1\otimes A_2
\end{split}
\end{equation}
are called \emph{operator determinants}
and $z=x_1\otimes x_2$ is a decomposable vector,
for details see, e.g., \cite{AtkinsonBook}.
A generic 2EP
(\ref{problem}) is \emph{nonsingular}, i.e., the corresponding
operator determinant $\Delta_0$ is nonsingular. In this case (see,
e.g., \cite{AtkinsonBook}), the matrices $\Delta_0^{-1}\Delta_1$ and $\Delta_0^{-1}\Delta_2$ commute and
the eigenvalues of (\ref{problem}) agree with the joint eigenvalues of (\ref{drugi}).
By exploting this relation it is possible to numerically solve a nonsingular 2EP
using standard tools for GEPs, see, e.g., \cite{HKP_JD2EP}.

If all linear combinations of $\Delta_0$, $\Delta_1$ and $\Delta_2$ are singular, 
we have a \emph{singular 2EP} that is much more difficult 
to solve. It can still have a finite number of 
 eigenvalues that can be computed by a staircase type algorithm from \cite{MP_Q2EP}
or by an algorithm for singular GEPs from \cite{HMP_SingGep,HMP_SingGep2}.
In the singular case, the relation between problems (\ref{problem}) and (\ref{drugi}) is much
less understood, for details see \cite{MP_SingMEP, KP_SingGep2}.

We say that a matrix pencil $A-\lambda B$, where $A,B\in\CC^{n\times n}$, is
\emph{singular} if $\det(A-\lambda B)\equiv 0$. This is equivalent to the
fact that the normal rank of the pencil, defined as
$$\textrm{nrank}(A-\lambda B):=\max_{\xi\in\CC}(\rank(A-\xi B)),$$
is strictly smaller than $n$. In such case $\lambda_0\in\CC$ is a finite
eigenvalue of $A-\lambda B$ if $\rank(A-\lambda_0 B)<
\textrm{nrank}(A-\lambda B)$ and $\lambda_0=\infty$ is an eigenvalue
if $\rank(B)<\textrm{nrank}(A-\lambda B)$. Standard numerical methods like,
e.g., {\tt eig} in Matlab, cannot be used to compute eigenvalues
of singular matrix pencils. The method we use is a projection to a regular problem 
of a normal rank size 
from \cite{HMP_SingGep2}.

\section{Relation to a two-parameter eigenvalue problem}\label{sec:relation_2EP}

In this section we will show the main result that 2D points are eigenvalues of
a related singular two-parameter eigenvalue problem. We will exploit this relation
to construct numerical methods that can compute all 2D points or just a subset of ZGV points of a pencil $A+\lambda B +\mu C$.

If we assume $\mu=\mu(\lambda)$ and $x=x(\lambda)$ in (\ref{eq:ABC}) and differentiate $\big(A+\lambda B+\mu(\lambda) C\big)x(\lambda)=0$, we obtain
\eqref{eq:der}.
At a ZGV point $(\lambda,\mu)$, where $\mu'(\lambda)=0$, we get
$\big(A+\lambda B+\mu(\lambda) C\big)x'(\lambda) + Bx(\lambda)=0$ that we write in a block form
\begin{equation}\label{eq:dbl_form}
 \left(\left[\begin{matrix} A & 0\cr B & A\end{matrix}\right]
 +\lambda \left[\begin{matrix} B & 0\cr 0 & B\end{matrix}\right]
 +\mu \left[\begin{matrix} C & 0\cr 0 & C\end{matrix}\right]\right)
 \left[\begin{matrix} x(\lambda) \cr x'(\lambda)\end{matrix}\right]=0.
\end{equation}
If we put together \eqref{eq:ABC} and \eqref{eq:dbl_form}, we obtain a 2EP
\begin{equation}\label{eq:2EP_ZGV}
\begin{split}
(A+\lambda B+ \mu C)x & =0\cr
\left(\left[\begin{matrix} A & 0\cr B & A\end{matrix}\right]
+\lambda \left[\begin{matrix} B & 0\cr 0 & B\end{matrix}\right]
+\mu \left[\begin{matrix} C & 0\cr 0 & C\end{matrix}\right]\right)
\left[\begin{matrix} z_1 \cr z_2\end{matrix}\right]&=0.
\end{split}
\end{equation}
Note that because of \eqref{eq:eig_root} all 2D points of type a), b), and c) give solutions of \eqref{eq:2EP_ZGV}.
The 2EP \eqref{eq:2EP_ZGV} is related to the pair of GEPs (\ref{drugi}), 
where the corresponding $2n^2\times 2n^2$ matrices
(\ref{Deltaik1}) are
\begin{equation}\label{Deltaik}
\begin{split}
  \Delta_0&=B\otimes \left[\begin{matrix} C & 0\cr 0 & C\end{matrix}\right]-C\otimes \left[\begin{matrix} B & 0\cr 0 & B\end{matrix}\right],\\
  \Delta_1&=C\otimes \left[\begin{matrix} A & 0\cr B & A\end{matrix}\right]-A\otimes \left[\begin{matrix} C & 0\cr 0 & C\end{matrix}\right],\\
  \Delta_2&=A\otimes \left[\begin{matrix} B & 0\cr 0 & B\end{matrix}\right]-B\otimes \left[\begin{matrix} A & 0\cr B & A\end{matrix}\right].
\end{split}
\end{equation}
 
The GEPs $(\Delta_1-\lambda \Delta_0)z=0$ and $(\Delta_2-\mu \Delta_0)z=0$ are singular and the 
related 2EP \eqref{eq:2EP_ZGV} is thus singular. It however has finitely many finite eigenvalues and these include not only ZGV but also 
all 2D points of (\ref{eq:ABC}). 

\begin{theorem}\label{thm:nrank_eig} Let the $n\times n$ matrices $A,B,C$ be such that $B$ and $C$ are nonsingular and for all but a 
finite number of values $\lambda_0$ the GEP \eqref{eq:gep_mu} has $n$ simple eigenvalues.
Then the following holds for the two-parameter eigenvalue problem \eqref{eq:2EP_ZGV}
and the related $2n^2\times 2n^2$ operator determinants \eqref{Deltaik}.
\begin{enumerate}[left=0em]
    \item[1)] The normal rank of the matrix pencil $\Delta_1-\lambda \Delta_0$ is $2n^2-n$.
    \item[2)] If $(\lambda_0,\mu_0)\in\CC^2$ is a 2D point for $A+\lambda B+\mu C$, then 
    $\lambda_0$ is an eigenvalue of $\Delta_1-\lambda \Delta_0$.
\end{enumerate}
\end{theorem}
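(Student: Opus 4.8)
The core tool is the standard correspondence (Atkinson) between the 2EP (\ref{eq:2EP_ZGV}) and the pair of GEPs (\ref{drugi}) with operator determinants (\ref{Deltaik}): for a decomposable vector $z=x\otimes\left[\begin{smallmatrix}z_1\\ z_2\end{smallmatrix}\right]$, the two equations $W_1(\lambda,\mu)x=0$ and $W_2(\lambda,\mu)\left[\begin{smallmatrix}z_1\\ z_2\end{smallmatrix}\right]=0$ hold simultaneously iff $\Delta_1 z=\lambda\Delta_0 z$ and $\Delta_2 z=\mu\Delta_0 z$. I would begin by making this precise in the present (singular) setting, in the direction that is actually needed: if $(\lambda_0,\mu_0)$ admits nonzero $x$ and $\left[\begin{smallmatrix}z_1\\ z_2\end{smallmatrix}\right]$ solving both rows of (\ref{eq:2EP_ZGV}), then $z=x\otimes\left[\begin{smallmatrix}z_1\\ z_2\end{smallmatrix}\right]\ne 0$ lies in $\ker(\Delta_1-\lambda_0\Delta_0)$. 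This is a one-line algebraic identity that does not require $\Delta_0$ to be invertible. For part 2), by Corollary~\ref{cor:root_vector} every 2D point of type a), b), or c) has an eigenvector $x_0$ and a root vector $z$ of height two satisfying (\ref{eq:eig_root}), which is exactly what the second row of (\ref{eq:2EP_ZGV}) demands with $(z_1,z_2)=(x_0,z)$; for a type d) point we instead take two independent eigenvectors $x_0,\tilde x_0$ of (\ref{eq:gep_lambda}) (equivalently of $W_1(\lambda_0,\mu_0)$) and use $(z_1,z_2)=(0,\tilde x_0)$, which still satisfies the second row because the off-diagonal $B$ block multiplies $z_1=0$. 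Either way we produce a nonzero $z$ in $\ker(\Delta_1-\lambda_0\Delta_0)$, so $\lambda_0$ is an eigenvalue of that pencil (once 1) is in place, "eigenvalue" means rank drop below the normal rank $2n^2-n$, and I would note that the kernels we build have dimension strictly larger than the generic kernel dimension $n$, or defer that bookkeeping to the proof of 1)).

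**Part 1), the normal rank.** This is the substantive half. The normal rank of $\Delta_1-\lambda\Delta_0$ is $\max_\lambda\rank(\Delta_1-\lambda\Delta_0)$, and by the Atkinson construction $\Delta_1-\lambda\Delta_0 = \big(C\otimes I_{2n}\big)\big(\text{stuff}\big) - \cdots$; more usefully, for a fixed generic $\lambda_0$ the pencil $\Delta_1-\lambda_0\Delta_0$ is (up to the Kronecker reshuffling that defines the $\Delta_i$) the operator-determinant-style combination built from $W_1(\lambda_0,\mu)$ and the bottom block $W_2(\lambda_0,\mu)$ viewed as linear pencils in the single remaining variable $\mu$. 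Concretely, $\ker(\Delta_1-\lambda_0\Delta_0)$ is spanned by the decomposable vectors $x\otimes\left[\begin{smallmatrix}z_1\\ z_2\end{smallmatrix}\right]$ for which there is a common $\mu$ with $W_1(\lambda_0,\mu)x=0$ and $W_2(\lambda_0,\mu)\left[\begin{smallmatrix}z_1\\ z_2\end{smallmatrix}\right]=0$. Under the genericity hypothesis, for all but finitely many $\lambda_0$ the GEP (\ref{eq:gep_mu}) $W_1(\lambda_0,\mu)x=0$ has exactly $n$ simple eigenvalues $\mu_1,\dots,\mu_n$, each with a one-dimensional eigenspace $\langle x_i\rangle$; at each such $\mu_i$ the block pencil $W_2(\lambda_0,\mu_i)=\left[\begin{smallmatrix}A+\lambda_0 B+\mu_i C & 0\\ B & A+\lambda_0 B+\mu_i C\end{smallmatrix}\right]$ has kernel precisely $\left[\begin{smallmatrix}0\\ \ker(A+\lambda_0 B+\mu_i C)\end{smallmatrix}\right]$, which is also one-dimensional, spanned by $\left[\begin{smallmatrix}0\\ x_i\end{smallmatrix}\right]$ (here I use that $B x_i\notin\im(A+\lambda_0 B+\mu_i C)$ precisely because $\mu_i$ is a \emph{simple}, hence non-defective and non-deficient, eigenvalue — this is the same $y_i^H B x_i\ne 0$ computation as in Lemma~\ref{lem:dbl}). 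So for generic $\lambda_0$ the kernel of $\Delta_1-\lambda_0\Delta_0$ is exactly $\bigoplus_{i=1}^n \langle x_i\otimes\left[\begin{smallmatrix}0\\ x_i\end{smallmatrix}\right]\rangle$, of dimension $n$, giving $\rank(\Delta_1-\lambda_0\Delta_0)=2n^2-n$ for generic $\lambda_0$; and since this rank can only drop at special $\lambda_0$, the normal rank equals $2n^2-n$.

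**Where the care is needed.** The main obstacle is the passage from "the two rows of (\ref{eq:2EP_ZGV}) have a common solution at some $\mu$" to "$\ker(\Delta_1-\lambda_0\Delta_0)$ is spanned by the corresponding decomposable vectors, with no extra dimension." In the nonsingular-$\Delta_0$ theory this is Atkinson's theorem verbatim, but here $\Delta_0$ itself is singular, so I must argue the kernel count directly rather than citing it. I would do this by the eigenvalue-counting/dimension argument just sketched: at a generic $\lambda_0$ the relevant $W_1$-pencil has $n$ simple eigenvalues $\mu_i$, the reshuffled pencil $\Delta_1-\lambda_0\Delta_0$ decomposes (after simultaneous block-diagonalization along the eigendecomposition of $W_1(\lambda_0,\cdot)$) into $n$ scalar-parametrized blocks each contributing exactly one kernel dimension from the second equation, and separately check that no kernel vector can be non-decomposable here because the first-component eigenspaces are one-dimensional and distinct. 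The genericity hypothesis is exactly what licenses "for all but finitely many $\lambda_0$", and semicontinuity of rank upgrades the generic value to the normal rank. I expect the only real bookkeeping to be verifying the block-diagonalization step cleanly, i.e. that conjugating $\Delta_1-\lambda_0\Delta_0$ by $(X\otimes I_{2n})^{-1}(\cdot)(X\otimes I_{2n})$, with $X=[x_1\ \cdots\ x_n]$ the $W_1(\lambda_0,\cdot)$ eigenvector matrix, really does block-diagonalize it into the promised $n$ pieces — routine but where an off-by-a-transpose slip is easy.
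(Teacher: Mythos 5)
Your proposal is correct and follows essentially the same line of argument as the paper: the one-line Atkinson identity that puts decomposable vectors built from 2EP solutions into $\ker(\Delta_1-\lambda_0\Delta_0)$ even when $\Delta_0$ is singular, the kernel count at generic $\lambda_0$ via the $n$ vectors $q_i\otimes\left[\begin{smallmatrix}0\\ q_i\end{smallmatrix}\right]$ for part 1), and the construction of the extra kernel vector $q\otimes\left[\begin{smallmatrix}q\\ z\end{smallmatrix}\right]$ from the root chain (types a, b, c) respectively $q_i\otimes\left[\begin{smallmatrix}0\\ q_j\end{smallmatrix}\right]$ with $i\ne j$ (type d) for part 2). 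The one small imprecision is in your sketched rank count for part 1): conjugating $\Delta_1-\lambda_0\Delta_0$ by $X\otimes I_{2n}$ alone does not block-diagonalize it; you first need to left-multiply by $C^{-1}\otimes\left[\begin{smallmatrix}C&0\\ 0&C\end{smallmatrix}\right]^{-1}$ to reduce the pencil to the Kronecker-sum form $I\otimes\widetilde M - M\otimes I$ with $M=-C^{-1}(A+\lambda_0 B)$, after which conjugation by $X\otimes I_{2n}$ (with $X$ the eigenvector matrix of $M$) splits it into $n$ blocks of size $2n\times 2n$, each of rank $2n-1$ by the same $y_i^H B q_i\ne 0$ argument you already invoke; the paper instead reaches the same conclusion by explicitly displaying the action of $\Delta_1-\lambda_0\Delta_0$ on the full Kronecker basis $\{q_i\otimes\left[\begin{smallmatrix}0\\ q_j\end{smallmatrix}\right], q_i\otimes\left[\begin{smallmatrix}q_j\\ 0\end{smallmatrix}\right]\}$.
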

\begin{proof} 
We can write 
\begin{equation}\label{eq:Delta_1_minus_Delta_0}
\Delta_1- \lambda\Delta_0 = C\otimes \left[\begin{matrix}  A+\lambda B& 0\cr B &  A+\lambda B\end{matrix}\right]
- (A+\lambda B)\otimes  \left[\begin{matrix}  C& 0\cr 0 &  C\end{matrix}\right].
\end{equation}
For a generic $\lambda_0\in\CC$ all eigenvalues of the GEP \eqref{eq:gep_mu} % $((A+\lambda_0 B)+\mu C)x=0$ 
are algebraically simple. Let
$(A+\lambda_0 B+\mu_iC)q_i=0$ for $i=1,\ldots,n$, where $q_1,\ldots,q_n$ are nonzero eigenvectors and $\mu_1,\ldots,\mu_n$ are pairwise distinct eigenvalues of the 
GEP \eqref{eq:gep_mu}. Then %$(A+\lambda_0B)q_i=-\mu_i C q_i$ for $i=1,\ldots,n$ and
it is easy to see that
\begin{equation}\label{eq:imageD0a}
(\Delta_1- \lambda_0\Delta_0)\left(q_i\otimes\left[\begin{matrix}0\cr q_j\end{matrix}\right]\right) 
= Cq_i\otimes \left[\begin{matrix}0\cr (\mu_i-\mu_j)Cq_j\end{matrix}\right]
\end{equation}
and
\begin{equation}\label{eq:imageD0b}
(\Delta_1- \lambda_0\Delta_0)\left(q_i\otimes\left[\begin{matrix}q_j\cr 0\end{matrix}\right]\right)  
= Cq_i\otimes \left[\begin{matrix}(\mu_i-\mu_j)Cq_j \cr Bq_j\end{matrix}\right]
\end{equation}
for $i,j=1,\ldots,n$. Since $B$ and $C$ are nonsingular, vectors from \eqref{eq:imageD0a} for $i\ne j$ and
vectors from \eqref{eq:imageD0b} are linearly independent, thus  
$\rank(\Delta_1-\lambda_0\Delta_0)\ge 2n^2-n$ and $\dim(\ker(\Delta_1- \lambda_0\Delta_0))\le n$.
From \eqref{eq:imageD0a} we see that
$n$ linearly independent vectors 
\begin{equation}\label{eq:veckernel}
q_i\otimes\left[\begin{matrix}0\cr q_i\end{matrix}\right], \quad i=1,\ldots,n,
\end{equation}
belong to $\ker(\Delta_1- \lambda_0\Delta_0)$, thus 
vectors \eqref{eq:veckernel} are 
basis for $\ker(\Delta_1- \lambda_0\Delta_0)$ and 
$\rank(\Delta_1-\lambda_0\Delta_0)=2n^2-n$, which proves 1). We also see that the nullspace of 
the pencil $\Delta_1-\lambda \Delta_0$, see, e.g., \cite{DeTeranDopicoMackey09}, which is the set 
\[{\cal N}=\{z(\lambda)\in\CC(\lambda)^{2n^2}:\ (\Delta_1-\lambda \Delta_0)z(\lambda)\equiv 0\},\]
% where $\CC(\lambda)$ stands for the field of rational functions over $\CC$,
contains only vectors that have $n$ zero blocks of size $n$ accordingly to \eqref{eq:veckernel}.

For 2), let $(\lambda_0,\mu_0)$ be a 2D point. We consider two options. First, if the point is of type a), b), or c), then 
option b) in the proof of Lemma~\ref{lem:dbl}
% Corollary~\ref{cor:root_vector} 
yields the existence of nonzero vectors $q$ and $z$ such that
$(A+\lambda_0B+\mu_0C)q=0$ and 
$(A+\lambda_0B+\mu_0C)z+Bq=0$. It follows that
$(A+\lambda_0B)q=-\mu_0 C q$ and $(A+\lambda_0B)z=-(Bq+\mu_0 Cz)$. It is now easy to see that 
\[
(\Delta_1- \lambda_0\Delta_0)\left(q\otimes \left[\begin{matrix}q\cr z\end{matrix}\right]\right)=0
\]
and we have a vector in the kernel of $\Delta_1-\lambda_0\Delta_0$ that is clearly linearly independent from
the vectors of the form \eqref{eq:veckernel}.
So, $\rank(\Delta_1-\lambda_0\Delta_0)<\nrank(\Delta_1,\Delta_0)$ and
$\lambda_0$ is indeed an eigenvalue of $\Delta_1-\lambda\Delta_0$. 

The remaining option is that $(\lambda_0,\mu_0)$ is a 2D point of type d), thus the geometric multiplicity $g$ of $\mu_0$ as an eigenvalue
of \eqref{eq:gep_mu} is at least $2$. If $g=a$, where $a$ is the algebraic multiplicity of $\mu_0$, and 
$q_1,\ldots,q_g$ are the corresponding linearly independent eigenvectors, then we have
additional 
$g^2-g$ vectors
\[q_i\otimes \left[\begin{matrix}0\cr q_j\end{matrix}\right]\quad {\rm for}\quad 1\le i,j\le g,\ i\ne j\]
in the kernel of $\Delta_1-\lambda_0\Delta_0$ that are linearly independent from 
vectors \eqref{eq:veckernel}. Therefore,  $\rank(\Delta_1-\lambda_0\Delta_0)<\nrank(\Delta_1,\Delta_0)$ and
$\lambda_0$ is an eigenvalue.
In case $a>g$ we could similarly construct enough linearly independent vectors 
from the kernel of $\Delta_1-\lambda_0\Delta_0$ to show that 
$\rank(\Delta_1-\lambda_0\Delta_0)<\nrank(\Delta_1,\Delta_0)$. We omit the details and refer to
\cite[Thm. 4]{MP_DoubleEig}, where a similar approach was used in the proof.
\end{proof}

The assumptions in Theorem \ref{thm:nrank_eig} are 
satisfied for generic matrices $A,B,C$, for details see Theorem \ref{thm:appendix}.
In the generic case a bivariate pencil $A+\lambda B +\mu C$ has $n(n-1)$ 2D points. It thus follows from Theorem~\ref{thm:nrank_eig} that
for generic matrices $A,B,C$ the normal rank of $\Delta_1-\lambda\Delta_0$ is $2n(n-1)$ and the related singular GEP $(\Delta_1-\lambda \Delta_0)z=0$ has $n(n-1)$ finite
eigenvalues. Each eigenvalue $\lambda_0$ of the GEP $(\Delta_1-\lambda \Delta_0)z=0$ is a candidate for the $\lambda$-coordinate of a 2D point (or a ZGV point).
To get the $\mu$-coordinate we fix $\lambda$ to $\lambda_0$ and solve the GEP \eqref{eq:gep_mu}. 
If $\mu_0$ is an eigenvalue of \eqref{eq:gep_mu} and $x$ and $y$ are the corresponding right and left eigenvector,
then:
\begin{itemize}[left=0em]
    \item[a)] $(\lambda_0,\mu_0)$ is a ZGV point if $\mu_0$ is a simple eigenvalue and 
    $y^HBx=0$. Note that if $\mu_0$ is simple then directions of vectors $x$ and $y$ are unique and the test 
    $y^HBx=0$ is well defined.
    \item[b)] $(\lambda_0,\mu_0)$ is a 2D point if the geometric multiplicity of $\mu_0$ is greater than one or if $y^HBx=0$.
\end{itemize}

Based on the above we devised Algorithm 1 that can compute all ZGV (or 2D) points. %, if the matrices are not too large.

\noindent\vrule height 0pt depth 0.5pt width \textwidth \\
{\bf Algorithm~1: Compute ZGV (or 2D) points of pencil $A+\lambda B+\mu C$}. \\[-3mm]
\vrule height 0pt depth 0.3pt width \textwidth \\
{\bf Input:} $A,B,C\in\CC^{n\times n}$, thresholds $\delta,\eta>0$.\\
{\bf Output:} List of ZGV (or 2D) points $(\lambda_i,\mu_i)$, $i=1,\ldots,r$.\\
\begin{tabular}{ll}
{\footnotesize 1:} & Compute $2n^2\times 2n^2$ matrices $\Delta_0$, $\Delta_1$, $\Delta_2$ from (\ref{Deltaik})\\
%{\footnotesize 2:} & Reduce the size of $\Delta_0,\Delta_1,\Delta_2$
%by one step of the staircase algorithm\\
{\footnotesize 2:} & Compute eigenvalues $\lambda_i$, $i=1,\dots,m$, of 
$\Delta_1 z = \lambda \Delta_0 z$.\\
{\footnotesize 3:} & for $i=1,\ldots,m$:\\
{\footnotesize 4:} & \quad Compute eigentriples $(\mu_j,x_j,y_j)$, $j=1,\ldots,n$, of
$(A+\lambda_i B)+\mu C$ \\
% {\footnotesize 5 (ZGV):} & \qquad  If $\mu_j$ is simple and $|y_j^HBx_j|\le \delta\|B\|_2$, 
% then add $(\lambda_i,\mu_j)$ to the list.\\
% {\footnotesize 5 (2D):} & \qquad  If $|y_j^HBx_j|\le \delta\|B\|_2$ or $\mu_j$ is not geom.~simple, then add $(\lambda_i,\mu_j)$ to the list.
{\footnotesize 5:} & \quad for $j=1,\ldots,n$:\\
{\footnotesize 6 (ZGV):} & \quad  \quad If $|y_j^HBx_j|\le \delta\|B\|_2$,\ $\sigma_{n-1}(A+\lambda_i B+\mu_j C)>\eta
\|A+\lambda_i B+\mu_j\|_2$,\\ 
\hbox{} & \quad \quad \quad and $|y_j^HCx_j|>\delta\|C\|_2$, then add $(\lambda_i,\mu_j)$ to the list.\\
{\footnotesize 6 (2D):} & \quad \quad  If $|y_j^HBx_j|\le \delta\|B\|_2$ or 
$\sigma_{n-1}(A+\lambda_i B+\mu_j C)\le \eta \|A+\lambda_i B+\mu_j\|_2$,\\
%$\rank(A+\lambda_i B+\mu_j C)\le n-2$}, 
\hbox{} & \quad \quad \quad then add $(\lambda_i,\mu_j)$ to the list.
\end{tabular} \\
\vrule height 0pt depth 0.5pt width \textwidth

In the following we give some additional details on Algorithm~1.
In line 2 we need to compute the finite eigenvalues of a singular pencil $\Delta_1-\lambda \Delta_0$. For this
purpose the projection algorithm from \cite{HMP_SingGep2} can be applied, which we will provide as Algorithm~2 in Section \ref{sec:proj_reg_2EP}. 

The test for the ZGV point in line 6 is based on Lemma \ref{lem:yBx} and reveals if $\mu_j$ is a simple eigenvalue of
$(A+\lambda_i B)+\mu C$. We detect $\mu_j$ as geometrically simple if
the second smallest singular value of $A+\lambda_i B+\mu_j C$ is large enough. 

Up to our knowledge, Algorithm 1 is the first method capable of computing all ZGV (or 2D)  points of a given pencil (\ref{eq:ABC}). As it involves a singular GEP with the $\Delta$-matrices of
size $2n^2\times 2n^2$, the method is feasible only for problems of moderate size.
In the next subsection we will show that 
for the particular problem \eqref{eq:2EP_ZGV} it is possible to construct a regular 2EP whose
eigenvalues include the eigenvalues of \eqref{eq:2EP_ZGV}. This way standard 
numerical methods for regular 2EPs can be used to compute ZGV (2D) points.

\subsection{Projected regular 2EP}\label{sec:proj_reg_2EP}

In \cite{HMP_SingGep2} a numerical method for a singular GEP using random projection of size 
of the normal rank is presented. If we apply it to compute the finite eigenvalues of the
singular pencil $\Delta_1-\lambda \Delta_0$ of normal rank $2n^2-n$ related to the 2EP \eqref{eq:2EP_ZGV}, we obtain
the following algorithm.

\noindent\vrule height 0pt depth 0.5pt width \textwidth \\
{\bf Algorithm~2: Computing finite eigenvalues of $\Delta_1-\lambda \Delta_0$
by projection}. \\[-3mm]
\vrule height 0pt depth 0.3pt width \textwidth \\
{\bf Input:} $2n^2\times 2n^2$ matrices $\Delta_0$ and $\Delta_1$ from \eqref{Deltaik},  thresholds $\delta_1,\delta_2>0$.\\
{\bf Output:} Finite eigenvalues of $\Delta_1-\lambda \Delta_0$. \\
\begin{tabular}{ll}
{\footnotesize 1:} & Select random unitary $2n^2\times 2n^2$ matrices
 $[W \ \, W_\perp]$ and 
 $[Z \ \, Z_\perp]$,
where $W$ \\ & and $Z$ have $2n^2-n$ columns.\\
{\footnotesize 2:} & Compute the eigenvalues $\lambda_i$, $i=1,\dots,2n^2-n$, and normalized right and left \\
& eigenvectors $x_i$ and $y_i$ of $W^H(\Delta_1-\lambda \Delta_0)Z$.\\
{\footnotesize 3:} & for $i=1,\ldots,2n^2-n$:\\
{\footnotesize 4:} & \quad  $\alpha_i=\|W_\perp^H(\Delta_1-\lambda_i\Delta_0)Zx_i\|_2$ and $\beta_i=\|y_i^HW^H(\Delta_1-\lambda_i\Delta_0)Z_\perp\|_2$ \\
{\footnotesize 5:} & \quad  $\gamma_i=|y_i^HW^H\Delta_0 Zx_i|\,(1+|\lambda_i|^2)^{-1/2}$\\
{\footnotesize 6:} & \quad If $\max(\alpha_i,\beta_i) < \delta_1 \, (\|\Delta_1\|_2+|\lambda_i|\,\|\Delta_0\|_2)$ 
and $\gamma_i>\delta_2$, then add $\lambda_i$ to the \\
& \quad  output list.
\end{tabular} \\
\vrule height 0pt depth 0.5pt width \textwidth
\medskip

Algorithm 2 is based on Theorem 4.5 from \cite{HMP_SingGep2} that shows that for generic matrices
$[W \ \, W_\perp]$ and $[Z \ \, Z_\perp]$ the pencil $W^H(\Delta_1-\lambda \Delta_0)Z$ is regular and 
contains all eigenvalues of the original singular pencil $\Delta_1-\lambda \Delta_0$. Moreover, 
the true finite eigenvalues of the original pencil can be separated from the true infinite eigenvalues and additional fake random eigenvalues
using criteria in lines 4, 5, and 6.

In the following we will show that it suffices to take matrices
$[W \ \, W_\perp]=I\otimes [U \ \, U_\perp]$ and 
$[Z \ \, Z_\perp] = I\otimes [V \ \, V_\perp]$,
where $[U \ \, U_\perp]$ and $[V \ \, V_\perp]$ are random unitary $2n\times 2n$ matrices such
that $U$ and $V$ have $2n-1$ columns. Due to the Kronecker structure of 
$W$ and $Z$ we can project the initial 2EP \eqref{eq:2EP_ZGV} into
a regular 2EP and thus use all available subspace methods for 2EPs, for instance
the Jacobi-Davidson method \cite{HKP_JD2EP, HP_HarmJDMEP} and the Sylvester-Arnoldi method \cite{MP_SylvArnoldi}, to compute
a small subset of eigenvalues close to a given target.
The following theorem shows that the above choice of matrices
$W$ and $Z$ leads to a regular pencil $W^H(\Delta_1-\lambda \Delta_0)Z$.

\begin{theorem}\label{thm:reg_projection}
For $n\times n$ matrices $A,B,C$ that satisfy the assumptions of Theorem \ref{thm:nrank_eig} the following holds for the two-parameter eigenvalue problem \eqref{eq:2EP_ZGV}
and the related $2n^2\times 2n^2$ operator determinants \eqref{Deltaik}. 
\begin{enumerate}[left=0em]
    \item[1)]
For generic $2n\times 2n$ unitary 
matrices $[U \ \, U_\perp]$ and $[V \ \, V_\perp]$, where
 $U$ and $V$ have $2n-1$ columns, is the two-parameter eigenvalue problem 
\begin{equation}\label{eq:proj_2EP_ZGV}
\begin{split}
(A+\lambda B+ \mu C)x_1 & =0\cr
\left(U^H\left[\begin{matrix} A & 0\cr B & A\end{matrix}\right]V
+\lambda\, U^H\left[\begin{matrix} B & 0\cr 0 & B\end{matrix}\right]V
+\mu\, U^H\left[\begin{matrix} C & 0\cr 0 & C\end{matrix}\right]V\right)
x_2&=0
\end{split}
\end{equation}
regular. 
\item[2)] If $(\lambda_0,\mu_0)\in\CC^2$ is an eigenvalue of \eqref{eq:proj_2EP_ZGV} with a right
eigenvector $x_1\otimes x_2$ and a left eigenvector $y_1\otimes y_2$, then $(\lambda_0,\mu_0)$ is 
a finite eigenvalue of \eqref{eq:2EP_ZGV} if and only if
\begin{equation}\label{projection_pog1}
    U_\perp^H\left[\begin{matrix}A+\lambda_0 B +\mu_0 C & 0 \cr B & A+\lambda_0 B +\mu_0 C\end{matrix}\right]Vx_2=0,
\end{equation}
\begin{equation}\label{projection_pog2}
 y_2^HU^H\left[\begin{matrix}A+\lambda_0 B +\mu_0 C & 0 \cr B & A+\lambda_0 B +\mu_0 C\end{matrix}\right]V_\perp=0,
\end{equation}
and
\begin{equation}\label{projection_pog3}
y_1^HBx_1\cdot y_2^HU^H\left[\begin{matrix}C & \cr & C\end{matrix}\right]Vx_2
-y_1^HCx_1\cdot y_2^HU^H\left[\begin{matrix}B & \cr & B\end{matrix}\right]Vx_2\ne 0.
\end{equation}
\end{enumerate}
\end{theorem}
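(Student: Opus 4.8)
The plan is to reduce Theorem~\ref{thm:reg_projection} to the general projection theorem (Theorem 4.5 of \cite{HMP_SingGep2}) applied to the singular pencil $\Delta_1-\lambda\Delta_0$, by exploiting the Kronecker structure in \eqref{Deltaik}. First I would observe that the choice $[W\ W_\perp]=I\otimes[U\ U_\perp]$, $[Z\ Z_\perp]=I\otimes[V\ V_\perp]$ yields, by standard mixed-product properties of the Kronecker product,
\[
W^H(\Delta_1-\lambda\Delta_0)Z = C\otimes U^H\!\begin{bmatrix}A+\lambda B & 0\\ B & A+\lambda B\end{bmatrix}\!V - (A+\lambda B)\otimes U^H\!\begin{bmatrix}C & 0\\ 0 & C\end{bmatrix}\!V,
\]
using the factorization \eqref{eq:Delta_1_minus_Delta_0}. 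This is precisely the $\Delta_1$-type operator determinant of the projected 2EP \eqref{eq:proj_2EP_ZGV}, so part~1) amounts to showing this projected 2EP is nonsingular, i.e.\ its $\Delta_0$-matrix $B\otimes U^H\,\mathrm{diag}(C,C)\,V - C\otimes U^H\,\mathrm{diag}(B,B)\,V$ is invertible. I would argue this holds generically: the $2n^2-n$ square matrix $W^H(\Delta_1-\lambda\Delta_0)Z$ is regular for generic $U,V$ because, by Theorem~\ref{thm:nrank_eig}(1), the normal rank of $\Delta_1-\lambda\Delta_0$ is exactly $2n^2-n$, and a generic compression of a pencil down to its normal-rank size is regular (this is the content of the cited Theorem 4.5, whose genericity hypothesis on $[W\ W_\perp],[Z\ Z_\perp]$ is an open dense condition that the Kronecker-structured choice satisfies for generic $U,V$ — here one may invoke that a nonzero polynomial in the entries of $U,V$ does not vanish identically, checking one concrete pair $U,V$ if needed).

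For part~2), I would again translate the abstract criteria of \cite[Thm.~4.5]{HMP_SingGep2} — that $(\lambda_0,\mu_0)$ (equivalently the GEP-eigenvalue $\lambda_0$) is a \emph{true finite} eigenvalue iff the residuals $W_\perp^H(\Delta_1-\lambda_0\Delta_0)Zx = 0$, $y^HW^H(\Delta_1-\lambda_0\Delta_0)Z_\perp = 0$ vanish and the normalization $y^HW^H\Delta_0 Zx\ne0$ is nonzero — through the Kronecker structure, using $x = x_1\otimes x_2$, $y = y_1\otimes y_2$. The factorization \eqref{eq:Delta_1_minus_Delta_0} evaluated at $\lambda_0$, combined with $(A+\lambda_0 B+\mu_0 C)x_1=0$ and $y_1^H(A+\lambda_0 B+\mu_0 C)=0$, makes the $C\otimes(\cdot)$ and $(A+\lambda_0 B)\otimes(\cdot)$ terms collapse so that $(\Delta_1-\lambda_0\Delta_0)(x_1\otimes Vx_2)$ reduces (up to the scalar factors $y_1^HBx_1$, etc.) to $x_1\otimes\big[\,\mathrm{diag}(A+\lambda_0B+\mu_0C,A+\lambda_0B+\mu_0C)+\text{off-diagonal }B\,\big]Vx_2$; projecting by $W_\perp^H = I\otimes U_\perp^H$ gives exactly \eqref{projection_pog1}, and the analogous left-sided computation gives \eqref{projection_pog2}. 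For the normalization condition, I would expand $y^HW^H\Delta_0 Zx = (y_1\otimes y_2)^H\big(B\otimes U^H\mathrm{diag}(C,C)V - C\otimes U^H\mathrm{diag}(B,B)V\big)(x_1\otimes x_2)$, which factors as $y_1^HBx_1\cdot y_2^HU^H\mathrm{diag}(C,C)Vx_2 - y_1^HCx_1\cdot y_2^HU^H\mathrm{diag}(B,B)Vx_2$, i.e.\ exactly the left-hand side of \eqref{projection_pog3}; requiring it to be nonzero is the "$\gamma\ne0$" separation of finite from infinite eigenvalues.

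The main obstacle I anticipate is the bookkeeping in part~2): verifying that after multiplying $(\Delta_1-\lambda_0\Delta_0)$ by $x_1\otimes Vx_2$ from the right (resp.\ $y_1^H\otimes\cdot$ from the left) and using the eigenvector equations, the surviving terms assemble into precisely the block $2\times2$ matrix appearing in \eqref{projection_pog1}–\eqref{projection_pog3} with the correct scalar coefficients $y_1^HBx_1$ and $y_1^HCx_1$, and in particular that no spurious cross-terms remain. A secondary subtlety is confirming that the genericity hypotheses of \cite[Thm.~4.5]{HMP_SingGep2} — stated there for fully generic compression matrices — are still met by the \emph{structured} matrices $I\otimes[U\ U_\perp]$, $I\otimes[V\ V_\perp]$; I would handle this by noting that the relevant non-degeneracy is the non-vanishing of certain polynomials in the entries of $U$ and $V$, and that these polynomials are not identically zero, which follows from part~1) (the projected $\Delta_0$ being generically invertible) together with Theorem~\ref{thm:nrank_eig}. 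I would also remark that, by \eqref{eq:eig_root} and the discussion after \eqref{Deltaik}, every 2D point of type a), b), c) (hence in particular every ZGV point) does produce an eigenvalue of \eqref{eq:proj_2EP_ZGV} satisfying \eqref{projection_pog1}–\eqref{projection_pog3}, so no relevant points are lost in the projection.
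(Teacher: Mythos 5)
Your plan for part~2) matches the paper's proof in spirit: evaluate the three criteria of~\cite[Thm.~4.5]{HMP_SingGep2} for the structured compression $W=I\otimes U$, $Z=I\otimes V$, use the Kronecker factorization \eqref{eq:Delta_1_minus_Delta_0} and the eigenvector relations $(A+\lambda_0B)x_1=-\mu_0 Cx_1$, $y_1^H(A+\lambda_0B)=-\mu_0 y_1^HC$ to collapse each criterion to a rank-one tensor, and cancel the nonzero scalar factors. One small imprecision: after the collapse, the residual $(I\otimes U_\perp)^H(\Delta_1-\lambda_0\Delta_0)(I\otimes V)(x_1\otimes x_2)$ is $Cx_1\otimes U_\perp^H[\cdots]Vx_2$ (with factor $Cx_1\ne 0$, not $x_1$, and no $y_1^HBx_1$ appears there — those scalars only enter the $\gamma$-criterion \eqref{projection_pog3}).

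For part~1) there are two real problems. First, you open by asserting that regularity of the projected 2EP \emph{means} its $\tilde\Delta_0$-matrix $(I\otimes U)^H\Delta_0(I\otimes V)$ is invertible, and set out to prove that. But it \emph{cannot} be invertible: by the discussion following Theorem~\ref{thm:nrank_eig}, $\rank(\Delta_0)=2n(n-1)=2n^2-2n$ for generic $A,B,C$, so any compression of $\Delta_0$ to size $2n^2-n$ has rank at most $2n^2-2n<2n^2-n$. What the paper actually proves — and what is both true and sufficient for Algorithm~3 — is that $\tilde\Delta_1=(I\otimes U)^H\Delta_1(I\otimes V)$ is nonsingular, so the projected \emph{pencil} $\tilde\Delta_1-\lambda\tilde\Delta_0$ is regular even though $\tilde\Delta_0$ is singular. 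Second, your actual argument for regularity is an appeal to~\cite[Thm.~4.5]{HMP_SingGep2} with the remark that the genericity condition is an open dense condition "that the Kronecker-structured choice satisfies for generic $U,V$." This is exactly the point that must be proved and is not automatic: the Kronecker-structured compressions form a proper subvariety of all compressions, and a priori that subvariety could sit inside the bad set where Theorem~4.5 fails. Saying the needed non-vanishing "follows from part~1)" is circular (it \emph{is} part~1)), and "checking one concrete pair $U,V$" is left undone. The paper closes this gap constructively: it writes down the explicit basis $q_i\otimes\left[\genfrac{}{}{0pt}{}{0}{q_i}\right]$ of $\ker(\Delta_1)$, shows $\ker(\Delta_1)\cap\im(I\otimes V)=\{0\}$ by computing $(I\otimes V_\perp)^HM=Q\,\diag(w_1,\ldots,w_n)$ with $Q$ nonsingular and $w_i\ne 0$ generically, and argues symmetrically on the row side. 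This concrete kernel computation is the substance you would need to supply to make part~1) rigorous.
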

\begin{proof}
For 1) it is enough to show that the matrix $(I\otimes U)^H(\Delta_1-\lambda_0\Delta_0)(I\otimes V)$ is nonsingular for a generic $\lambda_0\in\CC$. 
We know 
(see the proof of Theorem \ref{thm:nrank_eig}) 
that 
$\rank(\Delta_1-\lambda_0\Delta_0)=2n^2-n$ and  
there exist linearly independent vectors $q_1,\ldots,q_n$ 
such that 
the basis for $\ker(\Delta_1-\lambda_0\Delta_0)$ are vectors
$q_i\otimes\left[\genfrac{}{}{0pt}{}{0}{q_i}\right]$ for $i=1,\ldots,n$ (see \eqref{eq:imageD0a}) that
we assemble in a $2n^2\times n$ matrix
\[
M=\left[\begin{matrix}
q_1\otimes\left[\genfrac{}{}{0pt}{}{0}{q_1}\right] & \cdots & 
q_n\otimes\left[\genfrac{}{}{0pt}{}{0}{q_n}\right]\end{matrix}\right].
\]

Let us show that $(\Delta_1-\lambda_0\Delta_0)(I\otimes V)$ has full rank. This is 
equivalent to 
\begin{equation}\label{cnd:ker_delta_I_V}\ker(\Delta_1-\lambda_0\Delta_0)\cap\im(I\otimes V)=\{0\}.
\end{equation}
Suppose that the above is not true and 
there exists a nonzero $x\in
\im(I\otimes V)\cap \im(M)$. Since 
$\im(I\otimes V)=\im(I\otimes V_\perp)^\perp$, this is equivalent to the 
existence of a nonzero vector $r$ such that
$x=Mr$ and 
$(I\otimes V_\perp)^HMr=0$.
If $V_\perp=\left[\genfrac{}{}{0pt}{}{v_a}{v_b}\right]$ for $v_a,v_b\in\CC^n$,
we get 
\begin{equation}\label{eq:MQW}
(I\otimes V_\perp)^HM=Q\diag(v_b^Hq_1,\ldots,v_b^Hq_n),\end{equation}
where $Q=\left[\begin{matrix}q_1 & \cdots & q_n\end{matrix}\right]$. Since in the generic case 
$v_b^Hq_i\ne 0$ for $i=1,\ldots,n$, is
the matrix \eqref{eq:MQW} nonsingular and such $r$ cannot exist. Therefore, $(\Delta_1-\lambda_0\Delta_0)(I\otimes V)$ has full rank and
$(\Delta_1-\lambda_0\Delta_0)(I\otimes V)z\ne 0$ for all $z\ne 0$. 

In a similar way we can show that $\ker((I\otimes U)^H)\cap\im(\Delta_1-\lambda_0\Delta_0)=\{0\}$. This is equivalent to the 
condition $\im(I\otimes U)\cap\ker((\Delta_1-\lambda_0\Delta_0)^H)=\{0\}$, which is similar to \eqref{cnd:ker_delta_I_V} and 
we prove it in an analogous way. 

For 2), we know from \cite[Thm. 4.5]{HMP_SingGep2}, see also lines 3 and 5 in Algorithm 2, that 
\begin{equation}\label{eq:proof_IU}
(I\otimes U_\perp)^H(\Delta_1-\lambda_0\Delta_0)(I\otimes V)(x_1\otimes x_2)=0.
\end{equation}
It follows from \eqref{eq:Delta_1_minus_Delta_0} and $(A+\lambda_0B)x_1=-\mu_0Cx_1$ that 
\eqref{eq:proof_IU} is equal to
\[
Cx_1\otimes U_\perp^H\left[\begin{matrix}A+\lambda_0 B +\mu_0 C & 0 \cr B & A+\lambda_0 B +\mu_0 C\end{matrix}\right]Vx_2=0.
\]
Since $C$ is generic, $Cx_1\ne 0$ and \eqref{projection_pog1} must hold. In a similar way, \eqref{projection_pog2} follows from the condition
\[
(y_1\otimes y_2)^H(I\otimes U)^H(\Delta_1-\lambda_0\Delta_0)(I\otimes V_\perp)=0.
\]
Finally, from \cite[Thm. 4.5]{HMP_SingGep2}, see also lines 4 and 5 in Algorithm 2, it follows that
\begin{align*}0&\ne (y_1\otimes y_2)^H(I\otimes U)^H\Delta_0(I\otimes V)(x_1\otimes x_2)\cr
&=y_1^HBx_1\cdot y_2^HU^H\left[\begin{matrix}C & \cr & C\end{matrix}\right]Vx_2
-y_1^HCx_1\cdot y_2^HU^H\left[\begin{matrix}B & \cr & B\end{matrix}\right]Vx_2,
\end{align*}
which is the condition \eqref{projection_pog3}.
\end{proof}

This leads to the following algorithm for the computation of 2D points. If we want to compute only ZGV points, we check for 
each obtained 2D point $(\lambda_0,\mu_0)$ if $\mu_0$ is a simple eigenvalue of the GEP \eqref{eq:gep_mu}.

\noindent\vrule height 0pt depth 0.5pt width \textwidth \\
{\bf Algorithm~3: Compute 2D points of pencil $A+\lambda B+\mu C$}. \\[-3mm]
\vrule height 0pt depth 0.3pt width \textwidth \\
{\bf Input:} $A,B,C\in\CC^{n\times n}$, thresholds $\delta_1,\delta_2>0$.\\
{\bf Output:} List of 2D points $(\lambda_i,\mu_i)$, $i=1,\ldots,r$.\\
\begin{tabular}{ll}
{\footnotesize 1:} & Select random unitary $2n\times 2n$ matrices
 $[U \ \, U_\perp]$ and 
 $[V \ \, V_\perp]$,
where $U$ and $V$ \\ & have $2n-1$ columns.\\
{\footnotesize 2:} & Compute the eigenvalues $(\lambda_i,\mu_i)$, $i=1,\dots,m$, and normalized right and left\\
&  eigenvectors $x_{i1}\otimes x_{i2}$ and $y_{i1}\otimes y_{i2}$ of the 2EP \eqref{eq:proj_2EP_ZGV}.\\
{\footnotesize 3:} & for $i=1,\ldots,m$:\\
{\footnotesize 4:} & \quad $\alpha_i=U_\perp^H\left[\begin{matrix}A+\lambda_i B +\mu_i C & 0 \cr B & A+\lambda_i B +\mu_i C\end{matrix}\right]Vx_{i2}$\\
{\footnotesize 5:} & \quad $\beta_i=y_{i2}^HU^H\left[\begin{matrix}A+\lambda_i B +\mu_i C & 0 \cr B & A+\lambda_i B +\mu_i C\end{matrix}\right]V_\perp$\\
{\footnotesize 6:} & \quad $\gamma_i=
y_{i1}^HBx_{i1}\cdot y_{i2}^HU^H\left[\begin{matrix}C & \cr & C\end{matrix}\right]Vx_{i2}
-y_{i1}^HCx_{i1}\cdot y_{i2}^HU^H\left[\begin{matrix}B & \cr & B\end{matrix}\right]Vx_{i2}$\\
{\footnotesize 7:} & \quad If ${\rm max}(\|\alpha_i\|_2,\|\beta_i\|_2) < \delta_1(\|A\|_2+|\lambda_i|\|B\|_2+|\mu_i|\|C\|_2)$ and\\
 & \quad 
$|\gamma_i|>\delta_2(1+|\lambda_i|^2)^{1/2}$, then add $(\lambda_i,\mu_i)$ to the list of 2D points.
\end{tabular} \\
\vrule height 0pt depth 0.5pt width \textwidth

In line 2 of Algorithm 3 we can compute all ($m=2n^2-n$) or just a small subset ($m\ll 2n^2-n$) of the eigenvalues. The first option can be applied to
problems of small size as direct methods for 2EP explicitly compute the $\Delta$-matrices 
and then apply the QZ algorithm. Compared to Algorithm~1, the $\Delta$-matrices in Algorithm~3 are of 
size $(2n^2-n)\times (2n^2-n)$, while the size in Algorithm~1 is $2n^2\times 2n^2$. The difference in size
is small, but more important is that the 2EP in Algorithm~3 is nonsingular, whereas the GEP in 
Algorithm~1 is singular.

The nonsingularity of the 2EP in Algorithm~3 opens up options for subspace methods, see, e.g., \cite{HKP_JD2EP, HP_HarmJDMEP, MP_SylvArnoldi}, that compute just a subset of the eigenvalues close to a given target.
These methods do not compute the $\Delta$-matrices explicitly and can thus be applied to larger problems.

\section{A zero-residual Gauss-Newton method for 2D points}\label{sec:Gauss_Newton}
If we are looking for 2D points of a bivariate pencil \eqref{eq:ABC}, then, by Definition~\ref{def:2D},
we are searching for scalars $\lambda,\mu\in \CC$ and vectors $x,y\in\CC^n$ that satisfy
\begin{equation}
\begin{split}\label{eq:sis_2D}
  (A+\lambda B+\mu C)x=0,\cr
  y^H(A+\lambda B+\mu C)=0,\cr
  y^HBx=0,\cr
  x^Hx=1,\cr
  y^Hy=1.\cr
\end{split}
\end{equation}
%We will use the above system to derive a locally convergent method. 
There is one equation more than the number of unknowns in $\lambda,\mu,x,y$, therefore
(\ref{eq:sis_2D}) is an overdetermined nonlinear system. In addition, it
is a zero-residual system since a 2D point and the corresponding right and left eigenvector solve 
\eqref{eq:sis_2D} exactly. 
The Gauss-Newton method can be applied to compute a 2D point from 
an initial approximation $(\lambda_0,\mu_0)$ for the 2D point and
$x_0, y_0$ for the right and left eigenvector.

As the equations in \eqref{eq:sis_2D} are not complex differentiable in $x$ and $y$, in order 
to get the Jacobian matrix, we
introduce random vectors $a$ and $b$ (we can assume that they are not orthogonal 
to eigenvectors $x$ and $y$, respectively) to replace the normalizing conditions,   
 define $w=\overline y$, and rewrite  \eqref{eq:sis_2D} as
\begin{equation}\label{eq:sis_2Dconj}
\begin{split}
  (A+\lambda B+\mu C)x=0,\cr
  (A^T+\lambda B^T+\mu C^T)w=0,\cr
  w^TBx=0,\cr
  a^Hx=1,\cr
  b^Hw=1.\cr
\end{split}
\end{equation}
Suppose that
we have an initial approximation $(\lambda_0,\mu_0,x_0,w_0)$ for the solution of \eqref{eq:sis_2Dconj}. Then we get a correction
$(\Delta\lambda_0,\Delta\mu_0,\Delta x_0,\Delta w_0)$ for the update 
$$(\lambda_1,\mu_1,x_1,w_1)=(\lambda_0,\mu_0,x_0,w_0)+(\Delta\lambda_0,\Delta\mu_0,\Delta x_0,\Delta w_0)$$
from the $(2n+3)\times (2n+2)$
least squares problem
$$J_F(\lambda_0,\mu_0,x_0,w_0)\Delta s_0=-F(\lambda_0,\mu_0,x_0,w_0),$$
where the Jacobian matrix is 
\begin{equation}\label{eq:jacobian}
J_F(\lambda_0,\mu_0,x_0,w_0) = \left[\begin{matrix} 
A+\lambda_0 B+\mu_0 C & 0 & Bx_0 & Cx_0 \cr
0 & A^T+\lambda_0 B^T+\mu_0 C^T & B^Tw_0 & C^Tw_0 \cr
w_0^TB & x_0^TB^T & 0& 0\cr
a^H & 0& 0& 0\cr
0& b^H &0 & 0\end{matrix}\right]
\end{equation}
and
$$\Delta s_0 = 
\left[\begin{matrix}\Delta x_0\cr \Delta w_0\cr \Delta\lambda_0\cr \Delta\mu_0\end{matrix}\right],
\quad
F(\lambda_0,\mu_0,x_0,w_0)=
\left[\begin{matrix}(A+\lambda_0 B+\mu_0 C)x_0\cr 
(A^T+\lambda_0 B^T+\mu_0 C^T)w_0\cr 
w_0^TBx_0 \cr a^Hx_0-1 \cr b^Hw_0-1\end{matrix}\right].
$$
The update $\Delta s_0$ is the least squares solution, i.e., 
$\Delta s_0=-J_F(\lambda_0,\mu_0,x_0,w_0)^+F(\lambda_0,\mu_0,x_0,w_0)$ using pseudoinverse of the Jacobian matrix.
The method is given in Algorithm 4.
%\eject

\noindent\vrule height 0pt depth 0.5pt width \textwidth \\
{\bf Algorithm~4: Compute a 2D point of pencil $A+\lambda B+\mu C$}. \\[-3mm]
\vrule height 0pt depth 0.3pt width \textwidth \\
{\bf Input:} $A,B,C\in\CC^{n\times n}$, initial approximations $(\lambda_0,\mu_0)$ for the 2D point and
$x_0, y_0$ for the right and left eigenvector, random vectors $a,b$\\
{\bf Output:} A 2D point $(\lambda,\mu)$\\
\begin{tabular}{ll}
{\footnotesize 1:} & Set $w_0=\overline y_0$\\
{\footnotesize 2:} & for $k=0,1,2,\ldots$ until convergence do\\
{\footnotesize 3:} & \quad Compute\\ 
& \qquad $J_F = \left[\begin{matrix} 
A+\lambda_k B+\mu_k C & 0 & Bx_k & Cx_k \cr
0 & A^T+\lambda_k B^T+\mu_k C^T & B^Tw_k & C^Tw_k \cr
w_k^TB & x_k^TB^T & 0& 0\cr
a^H & 0& 0& 0\cr
0& b^H &0 & 0\end{matrix}\right],$\\[3em]
{\footnotesize 4:} & \quad $F=
\left[\begin{matrix}(A+\lambda_k B+\mu_k C)x_k\cr 
(A^T+\lambda_k B^T+\mu_k C^T)w_k\cr 
w_k^TBx_k \cr a^Hx_k-1 \cr b^Hw_k-1\end{matrix}\right],
\qquad
\left[\begin{matrix}\Delta x_k\cr \Delta w_k\cr \Delta\lambda_k\cr \Delta\mu_k\end{matrix}\right]
= -J_F^+ F
$\\[2em]
{\footnotesize 5:} & \quad  $x_{k+1}=x_k+\Delta x_k$, 
$w_{k+1}=w_k+\Delta w_k$, $\lambda_{k+1}=\lambda_k+\Delta \lambda_k$,
 $\mu_{k+1}=\mu_k+\Delta \mu_k$
%{\footnotesize 6:} & \quad Normalize $x_{k+1}=x_{k+1}/\|x_{k+1}\|$, $w_{k+1}=w_{k+1}/\|w_{k+1}\|$
\end{tabular} \\
\vrule height 0pt depth 0.5pt width \textwidth
\medskip

A numerical method 
that provides good initial 
approximations $(\lambda_0,\mu_0)$ for 2D points is the MFRD from Section~\ref{sec:elias}. Beside an  approximation for a 2D point, Algorithm 4 requires initial approximations for 
the right and left eigenvector as well. 
If they are not provided, we use the following heuristics to set the initial vectors $x_0$ and $y_0$.
Let $U\Sigma V^T$ be a singular value decomposition of $A+\lambda_0 B+\mu_0 C$. 
If $\sigma_{n-1}\ll \sigma_{n-2}$ or $\sigma_{n-1}\approx \sigma_n$, this suggest that a possible 2D point
is of type c) or d). In this case we take for $x_0$ a random combination of the right singular vectors $v_{n-1}$ and $v_n$ such that $\|x_0\|_2=1$ and then select for
$y_0$ a linear combination of $u_{n-1}$ and $u_n$ such that $y_0^HBx_0=0$ and $\|y_0\|_2=1$. Otherwise, we
assume that a 2D point is of type a) or b), and take $x_0=v_n$ and $y_0=u_n$.

In general the convergence of the Gauss-Newton method is not guaranteed. It is known however that the Gauss-Newton method  converges locally quadratically for a zero-residual problem if the 
Jacobian $J_F$ has full rank at the solution, see, e.g., \cite[Section 4.3.2]{Deuflhard_2011} or 
\cite[Section 10.4]{Nocedal_Wright_2006}. We can show that the Jacobian $J_F(\lambda_0,\mu_0,x_0,y_0)$ has full rank
at a generic 2D point of type a), where $\lambda_0$ is a double eigenvalue of \eqref{eq:gep_lambda},
$(\lambda_0,\mu_0)$ is
a ZGV point and $x_0$ and $y_0$ are the corresponding right and left eigenvector. To show this, we need the following auxiliary result.

\begin{lemma}\label{lem:jacobi_fullrank}
    Let $\xi\in\CC$ be 
    an eigenvalue of algebraic multiplicity two and geometric multiplicity one of 
    an $n\times n$ regular matrix pencil $A-\lambda B$. Let nonzero vectors $x,y,s,t\in\CC^n$ be respectively 
    the right and the left eigenvector and the left and the right 
    generalized eigenvectors of order two such that
\begin{equation}\label{eq:yBs_tBx}
\begin{split}
        (A-\xi B)x&=0,\\
        (A-\xi B)s&=Bx,\\
        y^H(A-\xi B)&=0,\\
        t^H(A-\xi B)&=y^HB.
    \end{split}
\end{equation}    
    Then, $y^HBs=t^HBx\ne 0$.
\end{lemma}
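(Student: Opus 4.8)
The plan is to exploit the Jordan structure of the pencil $A-\lambda B$ at the defective eigenvalue $\xi$. Since $\xi$ has algebraic multiplicity two and geometric multiplicity one, the local canonical form gives a $2\times 2$ Jordan block, and the vectors $x,s$ (right eigenvector and right root vector of height two) together with $y,t$ (left eigenvector and left root vector of height two) form dual Jordan chains. I would first recall the standard biorthogonality relations for such chains: after suitable normalization one has $y^HBx = 0$ (this is exactly the defectiveness condition, equivalent to $\xi$ being non-semisimple and playing the role of condition \eqref{cond:2D}), while the ``coupling'' quantities $y^HBs$ and $t^HBx$ are the nonzero entries that detect the length-two chain. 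So the two things to prove are the equality $y^HBs = t^HBx$ and the nonvanishing of this common value.

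For the equality, the key step is to combine the four relations in \eqref{eq:yBs_tBx} cleverly. Multiplying the second equation $(A-\xi B)s = Bx$ on the left by $t^H$ gives $t^H(A-\xi B)s = t^HBx$; multiplying the fourth equation $t^H(A-\xi B) = y^HB$ on the right by $s$ gives $t^H(A-\xi B)s = y^HBs$. Equating the two expressions for $t^H(A-\xi B)s$ yields $y^HBs = t^HBx$ immediately. This is the clean, routine half.

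The main obstacle is the nonvanishing statement $y^HBs \neq 0$. Here I would argue as follows. Suppose for contradiction that $y^HBs = 0$. Consider the $2$-dimensional subspace $\mathcal{S} = \operatorname{span}\{x, s\}$; since $(A-\xi B)$ maps $x \mapsto 0$ and $s \mapsto Bx$, we have $(A-\xi B)\mathcal{S} \subseteq \operatorname{span}\{Bx\} = B\operatorname{span}\{x\}$. Now $y^H$ annihilates $A-\xi B$, so $y^H$ is a left null vector, and the condition that $\xi$ has algebraic multiplicity exactly two means that the left root space also has dimension two, spanned by $y, t$. The vanishing $y^HBs = 0$ would force $t^HBx = 0$ as well by the equality just proved; one then shows that $x$ can be extended to a root vector of height three, i.e.\ there is $u$ with $(A-\xi B)u = Bs$, because the solvability condition $y^HBs = 0$ is met — but the existence of a height-three chain contradicts algebraic multiplicity two. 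Making the solvability step precise requires the Fredholm alternative for the (singular at $\xi$) matrix $A-\xi B$: the equation $(A-\xi B)u = Bs$ has a solution iff $Bs$ is orthogonal to the left null space $\operatorname{span}\{y\}$, i.e.\ iff $y^HBs = 0$. I would be careful to confirm that $Bx \notin (A-\xi B)\operatorname{range}$ in a way consistent with the chain already having length two, so that the contradiction is genuinely with algebraic multiplicity rather than a circular restatement. An alternative, perhaps cleaner, route is to pass to the $2\times 2$ compression: let $P = [x\ s]$ and $Q = [y\ t]$ (or their conjugates appropriately), write $Q^H(A-\xi B)P$ and $Q^HBP$ as $2\times 2$ matrices using \eqref{eq:yBs_tBx}, observe that $Q^H A P - \xi Q^H B P = \begin{bmatrix} 0 & * \\ 0 & 0\end{bmatrix}$ forces $Q^HBP$ to be upper triangular with zero diagonal and off-diagonal entry $y^HBs = t^HBx$, and then note that if this entry were zero the compressed pencil would be identically zero, contradicting that $P$ and $Q$ span the genuine (two-dimensional, generically reduced) root subspaces of a regular pencil — regularity of $A-\lambda B$ prevents a nontrivial common kernel direction surviving in the compression. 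I expect the bookkeeping around which vectors get conjugated (the paper uses $H$ throughout but the root vectors are defined by plain transpose-free relations in \eqref{eq:yBs_tBx}) to be the fussiest part, and I would state the normalization of the chains explicitly at the outset to avoid sign and conjugation ambiguities.
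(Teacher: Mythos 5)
Your equality argument ($y^HBs = t^HBx$ via two evaluations of $t^H(A-\xi B)s$) is exactly the paper's. For the nonvanishing, however, you take a genuinely different route. The paper reduces to the Weierstrass/Kronecker canonical form: it writes $P(A-\lambda B)Q$ as a $2\times 2$ Jordan block $J_2(\xi)$ plus a regular complement not containing $\xi$, reads off $x,s,y,t$ as explicit linear combinations of $Qe_1, Qe_2, P^He_1, P^He_2$, and computes $t^HBx=\bar\beta_1\alpha_1\ne 0$ directly. You instead argue by contradiction with the Fredholm alternative: since the geometric multiplicity is one, $\operatorname{range}(A-\xi B)=(\operatorname{span}\{y\})^\perp$, so $y^HBs=0$ would give a $u$ with $(A-\xi B)u=Bs$, extending $x,s$ to a Jordan chain of length three and forcing algebraic multiplicity $\ge 3$. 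Both arguments are valid; yours is shorter and more conceptual, whereas the paper's is more explicit but requires the KCF machinery. (The worry you flag about ``$Bx\notin\operatorname{range}$'' is a slip: $Bx$ \emph{is} in the range, because $s$ exists; what your argument actually needs is $Bs\notin\operatorname{range}$, and that is precisely what ``no length-three chain'' delivers, so there is no circularity.)

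The alternative $2\times 2$ compression you sketch does not work as stated. With $P=[x\ s]$ and $Q=[y\ t]$, relations \eqref{eq:yBs_tBx} give
\[
Q^H(A-\xi B)P=\begin{bmatrix}0&0\\0&\kappa\end{bmatrix},\qquad
Q^HBP=\begin{bmatrix}0&\kappa\\ \kappa&*\end{bmatrix},\qquad \kappa:=y^HBs=t^HBx,
\]
so $Q^H(A-\xi B)P$ is diagonal, not strictly upper triangular, and $Q^HBP$ is not ``upper triangular with zero diagonal.'' (Reordering $Q=[t\ y]$ makes $Q^H(A-\xi B)P$ strictly upper triangular, but then $Q^HBP$ has $\kappa$ on the \emph{diagonal}.) More importantly, even if $\kappa=0$ made the compressed pencil singular or zero, this does not by itself contradict regularity of the full $n\times n$ pencil: a $2\times 2$ compression of a regular pencil can perfectly well be a singular pencil unless one first establishes that $\operatorname{span}\{x,s\}$ and $\operatorname{span}\{y,t\}$ are deflating subspaces so that the compressed determinant divides the full one — which is essentially the Weierstrass reduction in disguise. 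I would drop this alternative and keep the Fredholm-alternative argument, which is sound.
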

\begin{proof}
    The equality $y^HBs=t^HBx$ follows if we multiply the second equation of \eqref{eq:yBs_tBx} by $t^H$ from the left
    and the fourth equation by $s$ from the right. 
    
    To show that the quantity is nonzero, we use the 
    Kronecker canonical form (KCF) of the pencil $A-\lambda B$, see, e.g., \cite{Gantmacher2}.
    If follows from the KCF of $A-\lambda B$ that there exist nonsingular matrices $P$ and $Q$ such that
    \[
    P(A-\lambda B)Q = 
    \left[\begin{matrix} J_2(\xi) & \cr
        & C-\lambda D
    \end{matrix}\right],\quad {\rm where}\ J_2(\xi)=\left[\begin{matrix}
        \xi-\lambda & 1\cr & \xi-\lambda
    \end{matrix}\right]
    \]
    and $C-\lambda D$ is a regular $(n-2)\times (n-2)$ block diagonal pencil that contains the remaining Jordan  
    and infinite blocks and 
    such that $\xi$ is not an eigenvalue of 
    $C-\lambda D$. It is easy to see that 
    \begin{align*}
    x&=\alpha_1Qe_1,\quad s=\alpha_1Qe_2+\alpha_2 Qe_1,\cr   
    y&=\beta_1P^{H}e_2,\quad t=\beta_1P^{H}e_1+\beta_2 P^{H}e_2
    \end{align*}
    for some scalars $\alpha_1,\alpha_2,\beta_1,\beta_2$, where $\alpha_1$ and $\beta_1$ are nonzero. It then follows from
    the second equation of \eqref{eq:yBs_tBx} that 
    \[ t^HBx = t^H(A-\xi B)s = (\beta_1e_1+\beta_2e_2)^H
    \left[\begin{matrix}
        0 & 1 & \cr & 0 & \cr & & C-\xi D
    \end{matrix}\right] (\alpha_1e_2+\alpha_2e_1)= \overline \beta_1 \alpha_1 \ne 0.
    \]
\end{proof}

\begin{lemma}\label{lem:root_eigv}
Let $(\lambda_0,\mu_0)\in\CC^2$ be a ZGV point of a biregular bivariate pencil $A+\lambda B +\mu C$, let $x_0$ and $y_0$ be the corresponding right and left eigenvector, and $w_0=\overline y_0$. If the algebraic multiplicity
of $\lambda_0$ as an eigenvalue of \eqref{eq:gep_lambda} is two, then the 
Jacobian \eqref{eq:jacobian}, where $a$ and $b$ are such nonzero vectors that $a^Hx_0=1$ and $b^Hw_0=1$, has full rank.    
\end{lemma}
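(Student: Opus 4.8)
The plan is to show that the Jacobian matrix $J_F(\lambda_0,\mu_0,x_0,w_0)$ in \eqref{eq:jacobian} has trivial kernel, which for a $(2n+3)\times(2n+2)$ matrix is equivalent to full column rank. So I would take an arbitrary null vector $(\Delta x,\Delta w,\Delta\lambda,\Delta\mu)^T$, write out the five block equations, and deduce that all four components vanish. Since $(\lambda_0,\mu_0)$ is a ZGV point of type a), by Theorem~\ref{thm:main_2D_ZGV} and Lemma~\ref{lem:dbl} the value $\lambda_0$ is an eigenvalue of \eqref{eq:gep_lambda} of geometric multiplicity one, and by hypothesis its algebraic multiplicity is exactly two; so $\ker(A+\lambda_0 B+\mu_0 C)=\sp\{x_0\}$ and $\ker((A+\lambda_0 B+\mu_0 C)^T)=\sp\{w_0\}$, and there are root vectors $s,t$ of height two as in Lemma~\ref{lem:jacobi_fullrank} (applied to the pencil $(A+\mu_0 C)+\lambda B$, i.e.\ with $\xi=\lambda_0$ and the pencil matrices $A+\mu_0 C$ and $-B$). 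Since $(\lambda_0,\mu_0)$ is a ZGV point, also $y_0^H C x_0\ne 0$.

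The first block equation $(A+\lambda_0 B+\mu_0 C)\Delta x + \Delta\lambda\,Bx_0 + \Delta\mu\,Cx_0 = 0$ is a linear system whose right-hand side must lie in $\im(A+\lambda_0 B+\mu_0 C)$; multiplying on the left by $y_0^H$ and using $y_0^H B x_0=0$ (the 2D condition) and $y_0^H C x_0\ne 0$ forces $\Delta\mu=0$. Then $(A+\lambda_0 B+\mu_0 C)\Delta x = -\Delta\lambda\,Bx_0$; since $Bx_0$ is, up to scalar, the height-two root vector generator (i.e.\ $(A+\lambda_0 B+\mu_0 C)s = Bx_0$), we get $\Delta x = -\Delta\lambda\, s + c\,x_0$ for some scalar $c$. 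Symmetrically, the second block equation gives $\Delta w = -\Delta\lambda\, t' + d\,w_0$ where $t'$ is the corresponding left root vector and $d$ a scalar (I would set this up so the same $\Delta\lambda$ appears, or argue the $\lambda$-component of the second equation separately using $w_0^H C x_0\ne 0$ again — one has to be slightly careful that the second equation is the transposed problem, so its "left eigenvector" is $\overline{x_0}$ and its root vector is $\overline{s}$; keeping conjugates straight is a bookkeeping point). The two normalization rows $a^H\Delta x=0$ and $b^H\Delta w=0$ pin down $c$ and $d$ once $\Delta\lambda$ is known. The crux is the third block equation $w_0^T B\,\Delta x + x_0^T B^T\Delta w = 0$: substituting the expressions for $\Delta x$ and $\Delta w$ and using $y_0^H B x_0=0$ (which kills the $c\,x_0$ and $d\,w_0$ cross terms, since $y_0^H B x_0 = w_0^T B x_0 = 0$) leaves $-\Delta\lambda\,(w_0^T B s + (\overline{s})^T B^T \overline{\,\cdot\,})$, i.e.\ a nonzero multiple of $y_0^H B s$, equal to zero. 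By Lemma~\ref{lem:jacobi_fullrank}, $y_0^H B s = t^H B x_0 \ne 0$, hence $\Delta\lambda=0$, and then $\Delta x = c\,x_0$, $\Delta w = d\,w_0$, and the normalizations give $c=d=0$.

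**The main obstacle** I anticipate is organizing the transposed/conjugated second block equation correctly: the second row of $J_F$ is the pencil $A^T+\lambda_0 B^T+\mu_0 C^T$ acting on $w_0=\overline{y_0}$, so its kernel is $\sp\{w_0\}$, its left kernel (the row space obstruction) is $\sp\{x_0\}$, and Lemma~\ref{lem:jacobi_fullrank} must be applied to this transposed pencil — or, more cleanly, one applies Lemma~\ref{lem:jacobi_fullrank} once to the pencil $(A+\mu_0 C)-\lambda(-B)$ with right eigenvector $x_0$, left eigenvector $y_0$, and root vectors $s$ (right, height two) and $t$ (left, height two), obtaining simultaneously the relations needed for both the first and second block rows via transposition, together with the key nonvanishing $y_0^H B s = t^H B x_0\ne 0$. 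Apart from that, every step is a short linear-algebra deduction; I would present it as: (i) $\Delta\mu=0$ from row 1 against $y_0^H$; (ii) solve rows 1,2 for $\Delta x,\Delta w$ modulo $x_0,w_0$; (iii) row 3 plus Lemma~\ref{lem:jacobi_fullrank} gives $\Delta\lambda=0$; (iv) rows 4,5 give the remaining scalars are zero; conclude $\ker J_F=\{0\}$.
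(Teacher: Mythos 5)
Your proposal is correct and follows essentially the same route as the paper: multiply row 1 by $y_0^H$ and use $y_0^HBx_0=0$, $y_0^HCx_0\neq0$ to kill $\Delta\mu$; bring in Lemma~\ref{lem:jacobi_fullrank} (applied to the pencil $(A+\mu_0C)+\lambda B$) via row 3 to kill $\Delta\lambda$; then rows 4 and 5 force the remaining components to vanish. The only cosmetic difference is organizational: the paper argues by contradiction with a case split on $\alpha\neq0$ (avoiding the explicit general solution $\Delta x=-\Delta\lambda\,s+c\,x_0$, $\Delta w=-\Delta\lambda\,\overline{t}+d\,w_0$ that you write out), but the underlying use of the key nonvanishing quantity $y_0^HBs=t^HBx_0$ and the normalization rows is identical.
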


\begin{proof}
    Suppose that the Jacobian $J_F(\lambda_0,\mu_0,x_0,w_0)$ is rank deficient. Then there exist vectors 
    $s,t$ and scalars $\alpha,\beta$, not all being zero, such that
    $J_F(\lambda_0,\mu_0,x_0,w_0)\left[\begin{matrix}
        s^T & t^T & \alpha & \beta
    \end{matrix}\right]^T=0,
    $
    i.e.,
\begin{align}
  (A+\lambda_0 B+\mu_0 C)s +\alpha Bx_0 + \beta Cx_0&=0,\label{eq:dokaz1}\\
  (A^T+\lambda B^T+\mu C^T)t+\alpha B^T w_0 + \beta C^Tw_0&=0,\label{eq:dokaz2}\\
  w_0^TBs + x_0^TB^Tt&=0,\label{eq:dokaz3}\\
  a^Hs&=0,\label{eq:dokaz4}\\
  b^Ht&=0\label{eq:dokaz5}.
\end{align}
If we multiply \eqref{eq:dokaz1} from the left by $y_0^H$ then %it follows from 
Lemma~\ref{lem:2D} yields that $\beta=0$ as $(\lambda_0,\mu_0)$ is a ZGV point. 

If $\alpha\ne 0$, then it follows from 
\eqref{eq:dokaz1} and \eqref{eq:dokaz2} that $s$ and $\overline t$ are (up to a multiplication by a nonzero scalar) left and right generalized eigenvectors of degree two of the GEP \eqref{eq:gep_lambda} for the eigenvalue $\lambda_0$. But then by
Lemma~\ref{lem:root_eigv} $w_0^TBs=x_0^TB^Tt\ne 0$ and \eqref{eq:dokaz3} does not hold. Therefore, $\alpha=0$.

Since $\alpha= \beta=0$ it follows from \eqref{eq:dokaz1} that $s=\gamma x_0$ for a scalar $\gamma$ and then $s=0$
because of \eqref{eq:dokaz4}. In a similar way we get from \eqref{eq:dokaz2} and \eqref{eq:dokaz5} that
$t=0$. This shows that there are no nonzero vectors in the kernel of $J_F(\lambda_0,\mu_0,x_0,w_0)$.
\end{proof}

A generic ZGV point $(\lambda_0,\mu_0)$ is such that $\lambda_0$ is a double eigenvalue of \eqref{eq:gep_lambda}. 
It follows from Lemma~\ref{lem:jacobi_fullrank} that near
such points Algorithm~4 converges quadratically. There might also exist ZGV points where multiplicity of $\lambda_0$ is higher than two. At such points we can expect a linear convergence.

Although we are primarily interested in ZGV points, Algorithm~4 converges to all 
types of 2D points for a bivariate pencil $A+\lambda B+\mu C$. If a 2D point is not a ZGV point, i.e., it is a 2D point of type b), c) or d), then the Jacobian \eqref{eq:jacobian} is rank
deficient and we can expect a linear convergence.

\section{Method of fixed relative distance}\label{sec:elias}
A method from \cite{Elias_DoubleEig}, designed
to compute approximations for values $\mu$ such that $A+\mu B$ has a multiple eigenvalue, can be straightforward generalized 
to compute good approximations for 2D points of a biregular pencil
\eqref{eq:ABC}, as already discussed in \cite[Sec.~6]{Elias_DoubleEig}.  

By Lemma~\ref{lem:dbl} $(\lambda_0,\mu_0)\in\CC^2$ is a 2D point of \eqref{eq:ABC} if and only if $\lambda_0$ is a multiple (generically double) eigenvalue of \eqref{eq:gep_lambda}. Thus, for $\widetilde\mu_0\approx \mu_0$ such that
$\widetilde\mu_0\ne \mu_0$ the GEP
\begin{equation}
\label{eq:pert_elias}\left((A+\widetilde \mu_0 C)+\lambda B\right)x=0
\end{equation}
has two different eigenvalues $\lambda_{01}$ and $\lambda_{02}$ close to $\lambda_0$. 
In the method of fixed relative distance (MFRD) we select a regularization parameter $\delta>0$ and
search for $\widetilde \mu_0$ such that \eqref{eq:pert_elias} has eigenvalues $\lambda_{01}$ and $\lambda_{02}=(1+\delta)\lambda_{01}$. We 
write this as a 2EP
\begin{equation}\label{eq:2ep_elias}
\begin{split}
  (A+\lambda B+\mu C)x_1&=0,\\
  (A+\lambda(1+\delta) B+\mu C)x_2&=0.
\end{split}
\end{equation}
The 2EP \eqref{eq:2ep_elias} is generically nonsingular for $\delta>0$ and eigenvalues are approximations of 2D points of \eqref{eq:ABC}. Note that
\eqref{eq:2ep_elias} has $n^2$ eigenvalues while generically there are only $n(n-1)$ 2D points of \eqref{eq:ABC}, so
at least $n$ of the 
eigenvalues of \eqref{eq:2ep_elias} are not related to 2D points of \eqref{eq:ABC}. It is easy to see that
$\eqref{eq:2ep_elias}$ has $n$ solutions of the form $(0,\mu)$, where $\mu$ is an eigenvalue of the GEP
$(A+\mu C)x=0$, which are generically not related to nearby 2D points.
For each eigenvalue
$(\lambda,\mu)$ of \eqref{eq:2ep_elias} we thus have to check if it corresponds to a 2D point of \eqref{eq:ABC}. For this task and also to refine
the solution, which is inaccurate due to the regularization parameter $\delta$, we use Algorithm~4.
A parallel approach is applied in \cite{Elias_DoubleEig}, where approximations obtained from a regularized 2EP
are  refined using a similar zero-residual Gauss-Newton method. 

The role of the regularization parameter $\delta$ is the following. If $\delta$ is too small, then the GEP \eqref{eq:2ep_elias} is close to being singular which makes it difficult to find eigenvalues using methods for nonsingular 2EPs. 
On the other hand, by increasing the value of $\delta$ approximations for the 2D points are becoming less accurate and Algorithm~4 might fail
to converge to a 2D point.

\section{Numerical examples and applications}\label{sec:applications}
We provide several numerical examples together with some applications and related problems. 
All numerical experiments were carried out in Matlab
2023a on a desktop PC with 64 GB RAM and i7--11700K 3.6 GHz CPU. Methods from MultiParEig \cite{multipareig_28} were used to solve the related 2EPs and singular GEPs. Some results were computed 
in higher precision using the Advanpix Multiprecision Computing Toolbox~\cite{advanpix}. 
The code and data for all
numerical examples in this paper are available at 
\url{https://github.com/borplestenjak/ZGV_Points}.

\begin{example}\label{ex:ex2x2}\rm We start with a simple 
example, where we consider
$$A=\left[\begin{matrix}3 & 0  \cr 0 & 0\end{matrix}\right],\quad
  B=\left[\begin{matrix}0 & 1\cr -1 & -1\end{matrix}\right],\quad
  C=\left[\begin{matrix}-2 & -2 \cr 2 & 0\end{matrix}\right].\quad
$$    
Then $\det(A+\lambda B+\mu C)=\lambda^2-2\lambda\mu+4\mu^2-3\lambda$ and real eigenvalues $(\lambda,\mu)$ lie on the ellipse 
shown in Figure \ref{fig:ex2x2}. The two ZGV points, marked on the figure, are $Z_1=(1,0.5)$ and $Z_2=(3,1.5)$.

\begin{figure}[h]
    \centering
    \includegraphics[width=6cm]{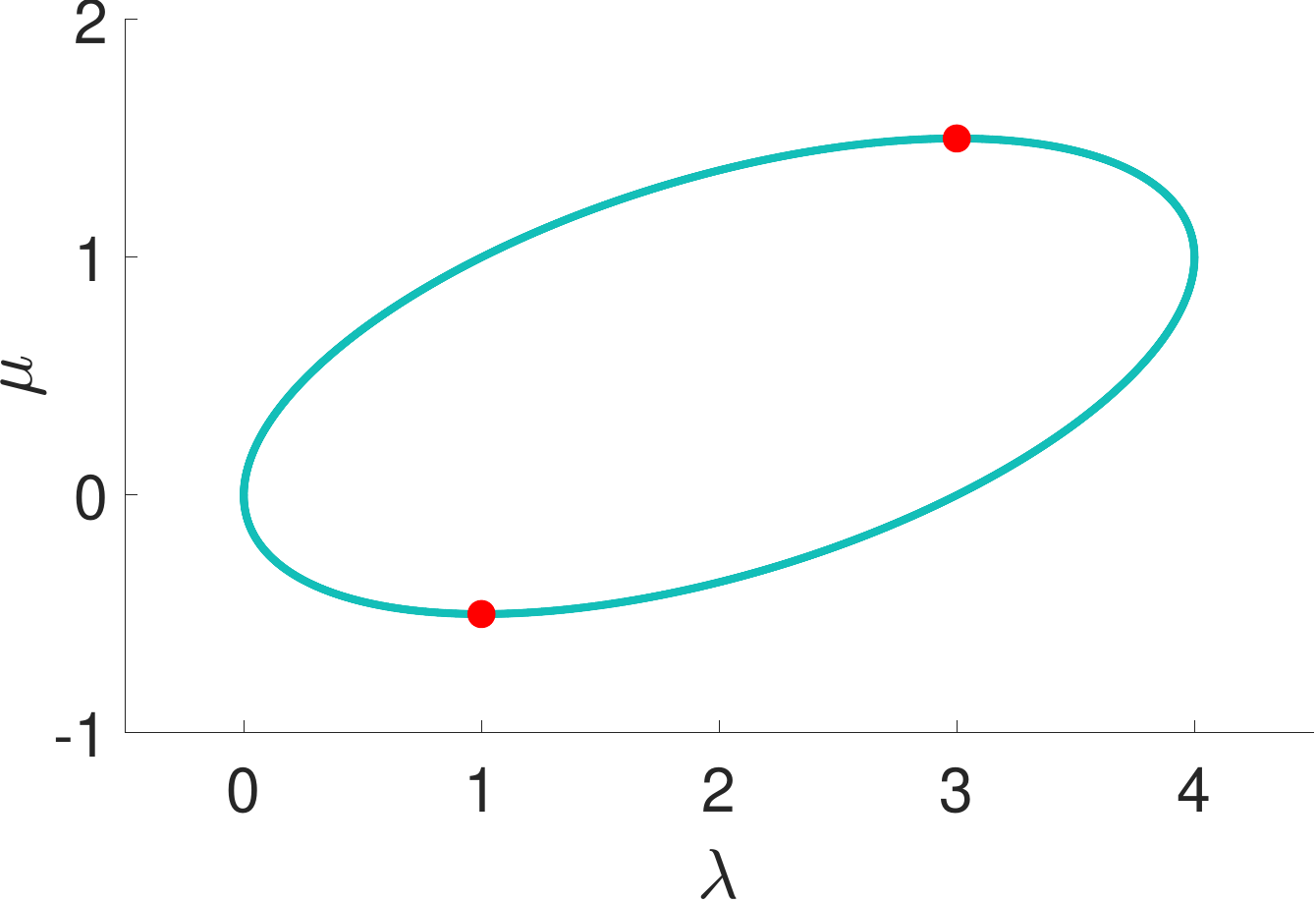}
    \caption{Real eigenvalues and ZGV points of Example \ref{ex:ex2x2}.}
    \label{fig:ex2x2}
\end{figure}

If we apply Algorithm~1, we get matrices $\Delta_0,\Delta_1,\Delta_2$ \eqref{Deltaik} of size $8\times 8$. The singular pencil
$\Delta_1-\lambda \Delta_0$ has two finite eigenvalues $\lambda_1=1$ and $\lambda_2=3$, which we 
compute using Algorithm~2.
By computing the corresponding eigenvalues 
$\mu$ of 
the pencils $(A+\lambda_i B)+\mu C$ we get both ZGV points. The maximal error 
$\|Z_i-(\lambda_i,\mu_i)\|_2$ for $i=1,2$ of the computed ZGV points is $1.6\cdot 10^{-15}$.

Alternatively, we could use Algorithm~3. The eigenvalues $(\lambda_i,\mu_i)$ of the projected regular 
2EP are listed in Table \ref{tab:2x2} together with the values $\widetilde\alpha_j=\alpha_j/(\|A\|_2+|\lambda_j|\|B\|_2+|\mu_j|\|C\|_2)$,
$\widetilde\beta_j=\beta_j/(\|A\|_2+|\lambda_j|\|B\|_2+|\mu_j|\|C\|_2)$, and
$\widetilde \gamma_j = \gamma_j(1+|\lambda_j|^2)^{-1/2}$ that are used to identify the two regular
eigenvalues corresponding to ZGV points. The maximal error in this case is $3.0\cdot 10^{-15}$.

\begin{table}[htb!] 
\centering
\caption{Results of Algorithm~3 applied to 
Example \ref{ex:ex2x2}.}\label{tab:2x2}
\begin{tabular}{c|cclll} \hline \rule{0pt}{2.85ex}%
$j$ & $\lambda_j$ & $\mu_j$ & $\quad \ \widetilde\alpha_j$ & $\quad \ \widetilde\beta_j$ & $\quad \ \widetilde\gamma_j$  \\[0.5mm]
\hline \rule{0pt}{2.5ex}%
1 & $1.00000 - 1.9\cdot 10^{-15}i$ & $-0.50000 - 1.2\cdot 10^{-15}i$ & $3.9\cdot 10^{-16}$ & $1.6\cdot 10^{-16}$ & $7.9\cdot 10^{-2}$ \\
2 & $3.00000 - 2.3\cdot 10^{-15}i$ & $ \phantom{-}1.50000 - 6.1\cdot 10^{-16}i$ & $8.9\cdot 10^{-17}$ & $2.5\cdot 10^{-16}$ & $3.3\cdot 10^{-1}$ \\
3 & $1.4\cdot 10^{15} - 1.1\cdot 10^{15}i$ & $ \phantom{-}8.3\cdot 10^{14} + 3.1\cdot 10^{14}i$ & $7.7\cdot 10^{-16}$ & $3.8\cdot 10^{-16}$ & $1.3\cdot 10^{-31}$ \\
4 & $2.9\cdot 10^{15} + 2.9\cdot 10^{15}i$ & $\phantom{-}2.0\cdot 10^{15} - 5.3\cdot 10^{14}i$ & $1.2\cdot 10^{-16}$ & $6.6\cdot 10^{-17}$ & $3.0\cdot 10^{-32}$ \\
5 & $0.96018 + 0.34932i$ & $-0.52033-0.00224i$ & $4.1\cdot 10^{-2}$ & $3.8\cdot 10^{-16}$ & $8.2\cdot 10^{-2}$ \\
6 & $4.18091-0.52938i$ & $\phantom{-}1.46522-0.38308i$ & $5.8\cdot 10^{-16}$ & $7.3\cdot 10^{-2}$ & $1.2\cdot 10^{-1}$ \\
\hline
\end{tabular}
\end{table}

Yet another option is to apply the MFRD and Algorithm~4. If we take $\delta=10^{-2}$ and solve the
2EP \eqref{eq:2ep_elias}, we get four candidates for ZGV points:
$(\widetilde\lambda_1,\widetilde\mu_1)=(0.99503,-0.49999)$,
$(\widetilde\lambda_2,\widetilde\mu_2)=(2.98504,1.49996)$,
$(\widetilde\lambda_3,\widetilde\mu_3)=(-1.24\cdot 10^{-22},1.63\cdot 10^{-11})$,
$(\widetilde\lambda_4,\widetilde\mu_4)=(0,0)$.
We use them as initial values for the Gauss-Newton method in
Algorithm~4 and select initial approximations for left and right eigenvectors from
the SVD of $A+\widetilde \lambda_i B+\widetilde \mu_i C$. The first two candidates converge
quadratically to $Z_1$ and $Z_2$, respectively, with the maximal error $1.2\cdot 10^{-16}$. 
\end{example}

\begin{example}\label{ex:conv_plot}\rm To illustrate the quadratic convergence of Algorithm 4 near
a ZGV point, we apply the algorithm to the pencil \eqref{ex:mat_exaone} from Example~\ref{exa:one} and initial approximations obtained by the MFRD using $\delta=10^{-2}$. The convergence plots of nine obtained solutions, composed of six ZGV points and three 2D points of type d), where the eigencurves intersect, are presented in Figure \ref{fig:convg_plot}. To emphasize the quadratic convergence, 
we used computation in higher precision. 
Although the  Jacobian is singular at the solution, we observe initial
quadratic convergence for the intersection points as well, followed by steps where the error suddenly increases close to the solution. This happens because for these eigenvalues the right and left eigenvectors are not unique. 

\begin{figure}[h]
    \centering
    \includegraphics[width=6cm]{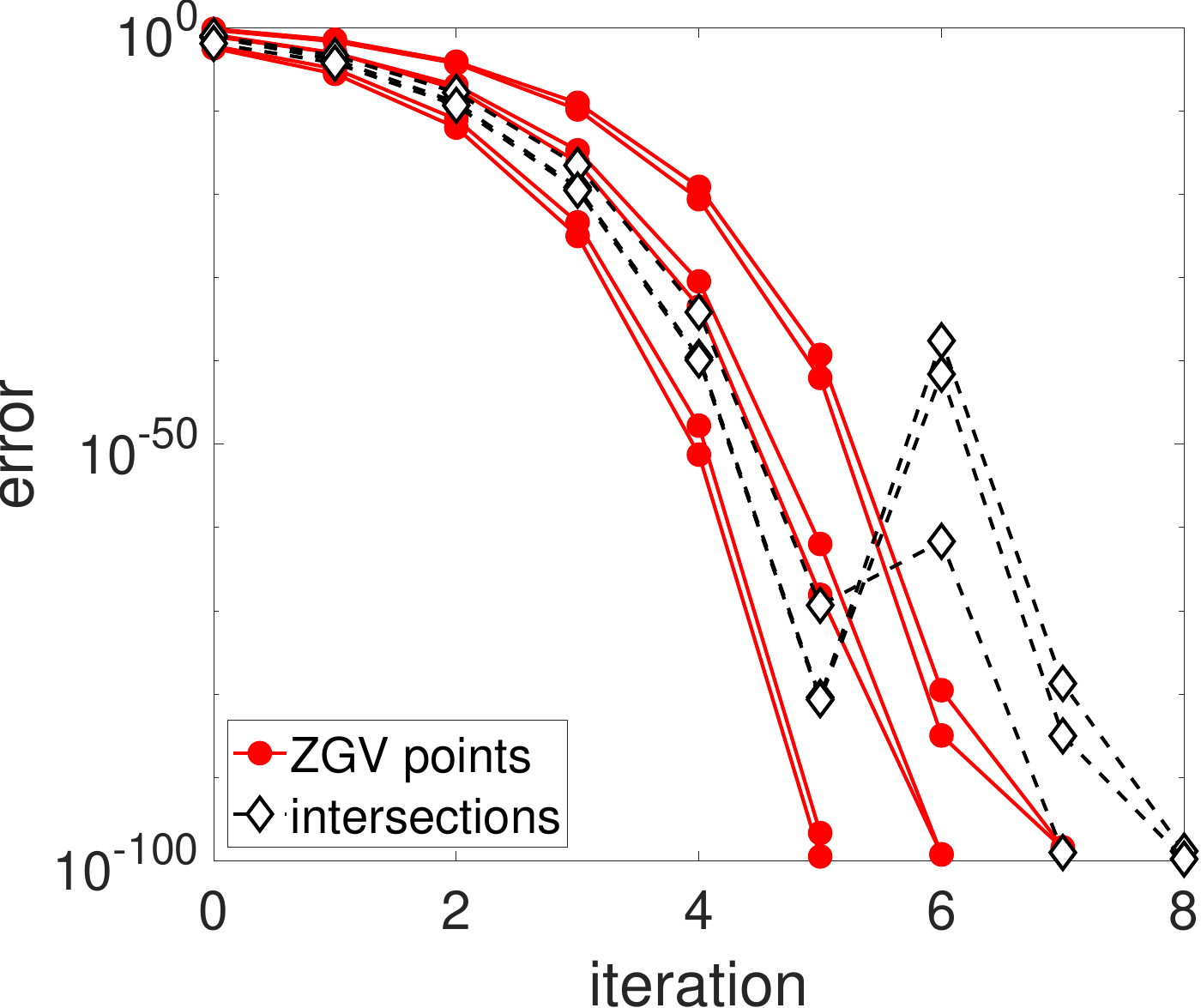}
    \caption{Convergence plot for Algorithm 4 applied to 2D points of Example \ref{exa:one}.}
    \label{fig:convg_plot}
\end{figure}
\end{example}

\begin{example}\label{ex:rand5_45}\rm We take random
real $n\times n$ matrices $A$, $B$, $C$ and compare the timings and errors of the computed ZGV points.
The exact ZGV points to compare with were computed by an additional refinement using Algorithm 4 in 
quadruple precision.
For each $n=5,10,\ldots,45$ we solved 10 random problems. The comparison of median computational time and median 
maximal relative error of the obtained ZGV points is presented in Figure \ref{fig:rand5_45}. We applied Algorithm 4 to refine the results obtained by Algorithm 1 and Algorithm 3 as well.

\begin{figure}[h]
    \centering
    \includegraphics[width=6cm]{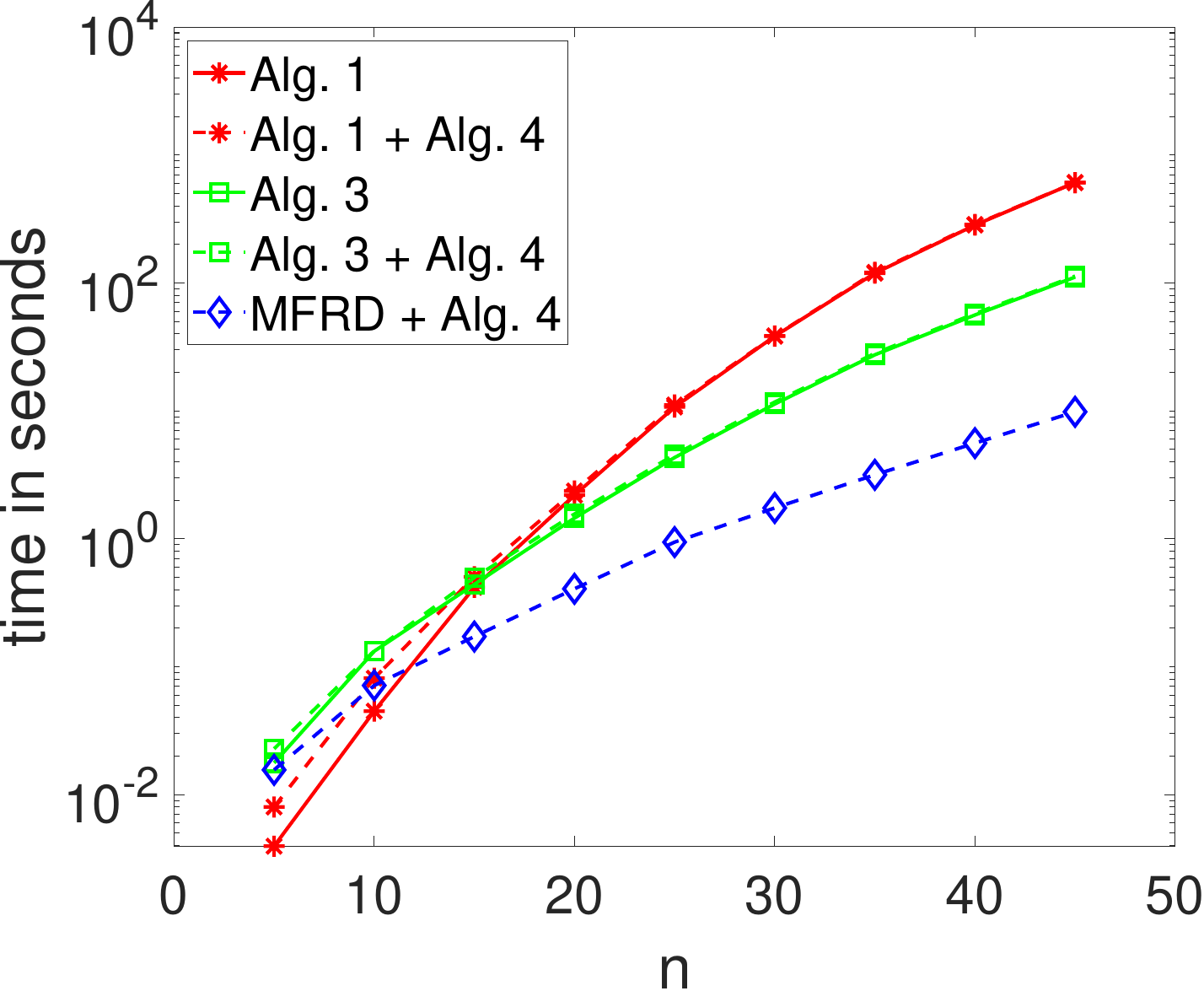}\quad  \includegraphics[width=6cm]{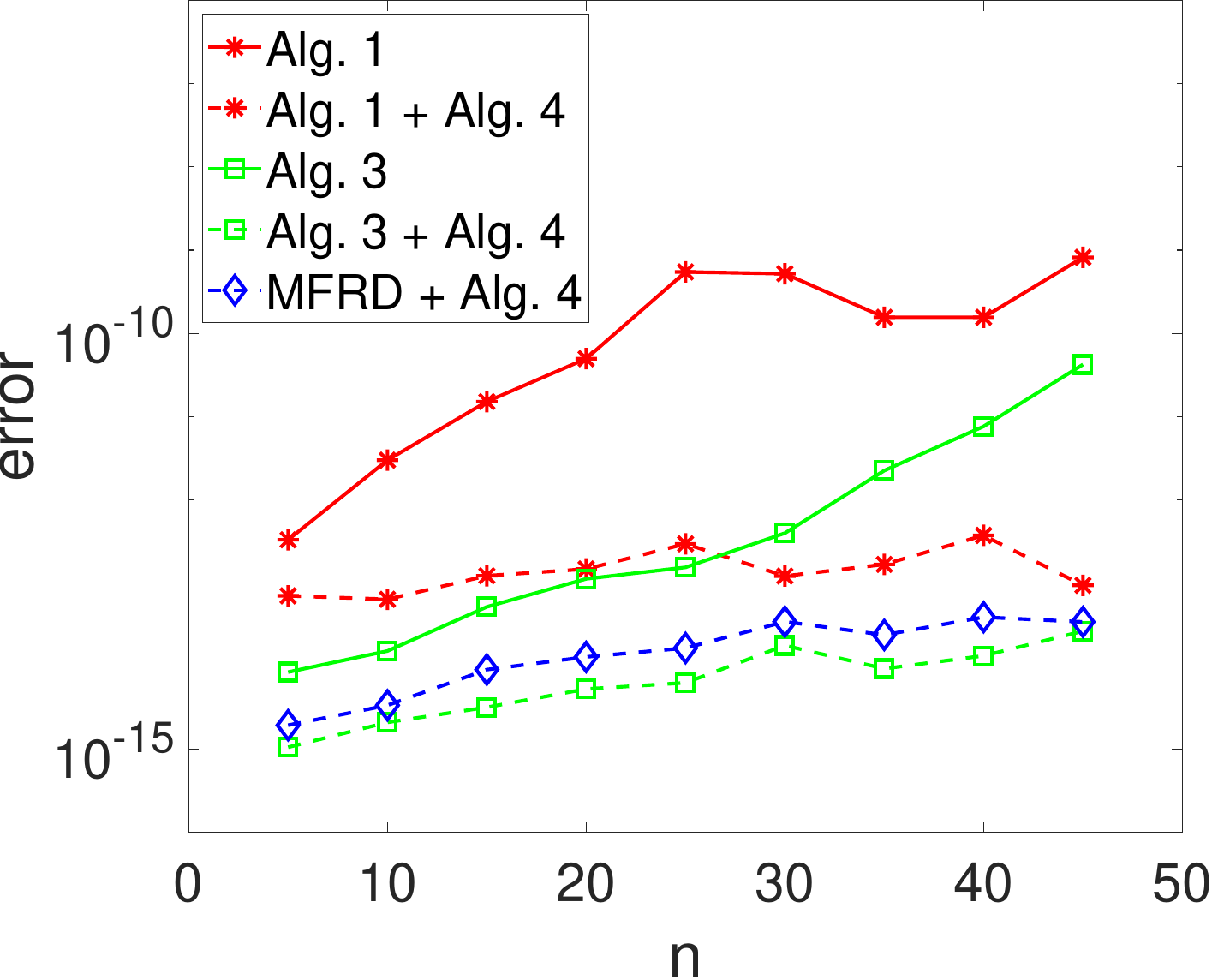}
    \caption{Computational times (left hand side) and errors (right hand side) on random pencils.}
    \label{fig:rand5_45}
\end{figure}
\end{example}

All three methods have complexity ${\cal O}(n^6)$. Algorithm 1 is the most expensive 
since it requires the solution of a singular GEP of size $2n^2\times 2n^2$ to compute the candidates for $\lambda$. In addition, for each candidate we have to solve an $n\times n$ GEP to obtain the possible $\mu$ parts. 
Algorithm 3 is more efficient as we have to solve a regular and slightly smaller GEP of size $(2n^2-n)\times (2n^2-n)$, and no additional smaller GEPs have to be solved. The MFRD is asymptotically the fastest approach as the main task is 
to solve a regular MEP with matrices of size $n\times n$, which is equivalent to solving a regular GEP of size $n^2\times n^2$.

As expected, the MFRD combined with Algorithm 4 is the fastest method for large $n$. It is also the most accurate method due to the final refinement by the Gauss-Newton method, but note that this refinement can be applied with virtually no extra cost to
Algorithm 1 and Algorithm 3 as well which reduces the error substantially. Based on these results we see that Algorithms 1 and 3 are suitable for small problems where $n\le 10$, while for larger problems one should
use the MFRD. 

\subsection{2D-eigenvalue problem}\label{sec:2D}

In \cite{LuSuBai_SIMAX} a 2D-eigenvalue problem (2DEVP) is studied, which turns
out to be a special case of 2D points. In 2DEVP we have Hermitian matrices
$A, B\in\CC^{n\times n}$, where $ B$ is indefinite, and we are searching for a
pair $(\lambda,\mu)\in\RR^2$, called a \emph{2D-eigenvalue}, 
and nonzero $x\in\CC^n$ such that
\begin{equation}\label{eq:2DEVP}
\begin{split}
    (A-\lambda B)x&=\mu x,\cr
    x^H Bx &= 0,\cr
    x^Hx & = 1.
\end{split}
\end{equation}
We can see (\ref{eq:2DEVP}) as a special case of (\ref{eq:sis_2D}).
Namely, for $\lambda\in\RR$ is the matrix $A-\lambda B$ Hermitian and has real eigenvalues 
$\mu_1,\ldots,\mu_n$. The corresponding left and right eigenvectors are equal.
Thus, each 2D-eigenvalue of \eqref{eq:2DEVP} is a 2D point of the bivariate pencil 
$A+\lambda (-B)+\mu (-I)$
and each solution $(\lambda,\mu,x)$ of  (\ref{eq:2DEVP}) gives a solution 
$(\lambda,\mu,x,x)$ of the corresponding (\ref{eq:sis_2D}).
The related problem \eqref{eq:sis_2D}
can have additional solutions as $\lambda$ and $\mu$ in (\ref{eq:sis_2D}) can 
be complex and right and left eigenvectors $x$ and $y$ are not necessary colinear. 
 Therefore, we can apply Algorithm~1 or Algorithm~3 to compute
all solutions of (\ref{eq:sis_2D}) and then a subset of real pairs $(\lambda,\mu)$ is the
solution of (\ref{eq:2DEVP}). Up to our knowledge, this is the first global method
that computes 
all 2D-eigenvalues of a 2DEVP. All other algorithms, including the one introduced in
\cite{LuSuBai_SIMAX}, are local methods that require good initial approximations 
and compute a single solution of (\ref{eq:2DEVP}). 

Alternatively, we can apply the MFRD from Section~\ref{sec:elias} to obtain good approximations for the solutions
of a related problem \eqref{eq:sis_2D} and then refine the approximations by applying the Gauss-Newton type method to \eqref{eq:2DEVP} in
a similar way as in Algorithm~4.

\begin{example}\label{ex:lusu} \rm The matrices of a 2D-eigenvalue problem from \cite[Ex.~1]{LuSuBai_SIMAX} are
$$A=\left[\begin{matrix}2 & 0  &1\cr 0 & 0 & 1\cr 1 & 1 & 0\end{matrix}\right],\quad
  B=\left[\begin{matrix}1 & 0  &1\cr 0 & 1 & 1\cr 1 & 1 & 0\end{matrix}\right].\quad
$$    
Figure~\ref{fig:LuSuBai} shows the real eigencurves of the corresponding bivariate pencil 
$A+\lambda (-B)+\mu (-I)$ together with the three 2D-eigenvalues, which are all 2D points
of type a). The 2D point $(1,0)$ is such that $\lambda=1$ is an eigenvalue of multiplicity three for 
the pencil $A-\lambda B$.

\begin{figure}[h]
    \centering
    \includegraphics[width=6cm]{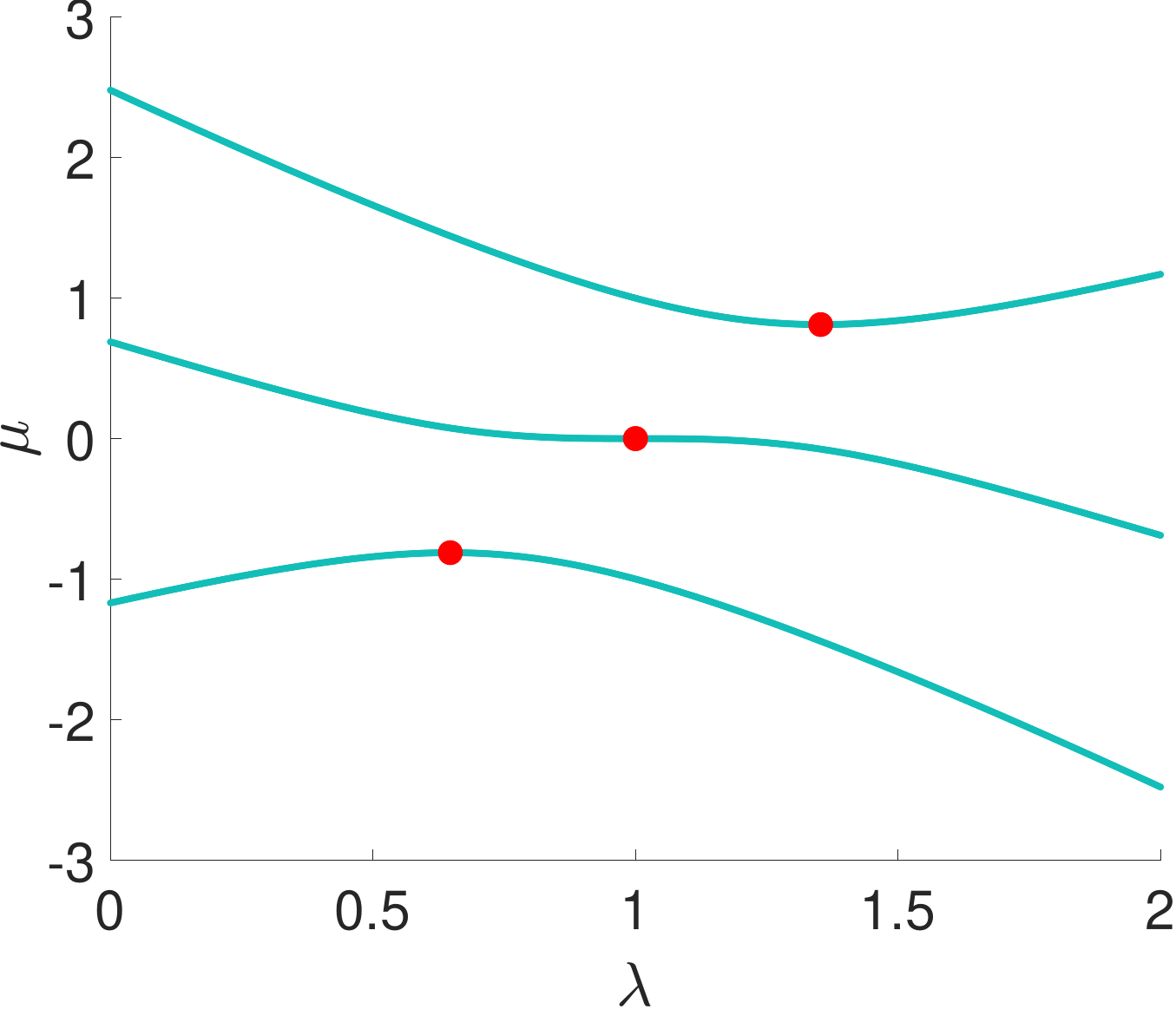}
    \caption{2D-eigenvalues of Example \ref{ex:lusu}}
    \label{fig:LuSuBai}
\end{figure}
\end{example}

We apply Algorithm~1 and construct matrices $\Delta_0,\Delta_1,\Delta_2$ \eqref{Deltaik} of size $18\times 18$. The singular pencil
$\Delta_1-\lambda \Delta_0$ has six finite eigenvalues $\lambda_1,\ldots,\lambda_6$. By computing the corresponding eigenvalues 
$\mu$ of 
the pencils $(A-\lambda_i B)-\mu I$ we obtain six 2D points:
$(1 \pm 1.5\cdot 10^{-8},\mp\, 6.3\cdot 10^{-24})$, $(1.3527,0.8121)$,
$(0.6473,-0.8121)$, and $(1.0000\pm 1.6371i,1.1\cdot 10^{-15}\mp 2.1327i)$. The point $(1,0)$ is computed twice because
$\lambda=1$ is a double eigenvalue of $\Delta_1-\lambda \Delta_0$.

\begin{example}\label{ex:big} \rm 
We consider 2D-eigenvalues of $n\times n$ banded Toeplitz matrices 
\begin{equation}\label{eq:penta}
A={\rm pentadiag}(1,0,5,0,1),\quad B={\rm tridiag}(1,1/2,1).
\end{equation} 
For each $\lambda\in\RR$ the 
eigenvalues of $A-\lambda B$ can be ordered as $\mu_n(\lambda)\le \cdots\le \mu_1(\lambda)$.
The real eigencurves
of $A-\lambda B-\mu I$ have a particular structure that is visible in Figure~\ref{fig:ExamplePenta} for $n=10$ (left hand figure) and  $n=20$ (close up, right hand figure). For $n=2m$ the curves 
$\mu_{2k-1}(\lambda)$ and $\mu_{2k}(\lambda)$ for $k=1,\ldots,m$ touch at $2k-1$ points, which are
2D points of type d) and are represented by red dots. Together there are $m^2$ such 2D-eigenvalues. The additional 2D-eigenvalues are
ZGV points represented by white dots, where $\mu'(\lambda)=0$. The 2D-eigenvalues in
Figure~\ref{fig:ExamplePenta} were computed with the MFRD combined with Algorithm 4.

\begin{figure}[h]
    \centering
    \includegraphics[width=6cm]{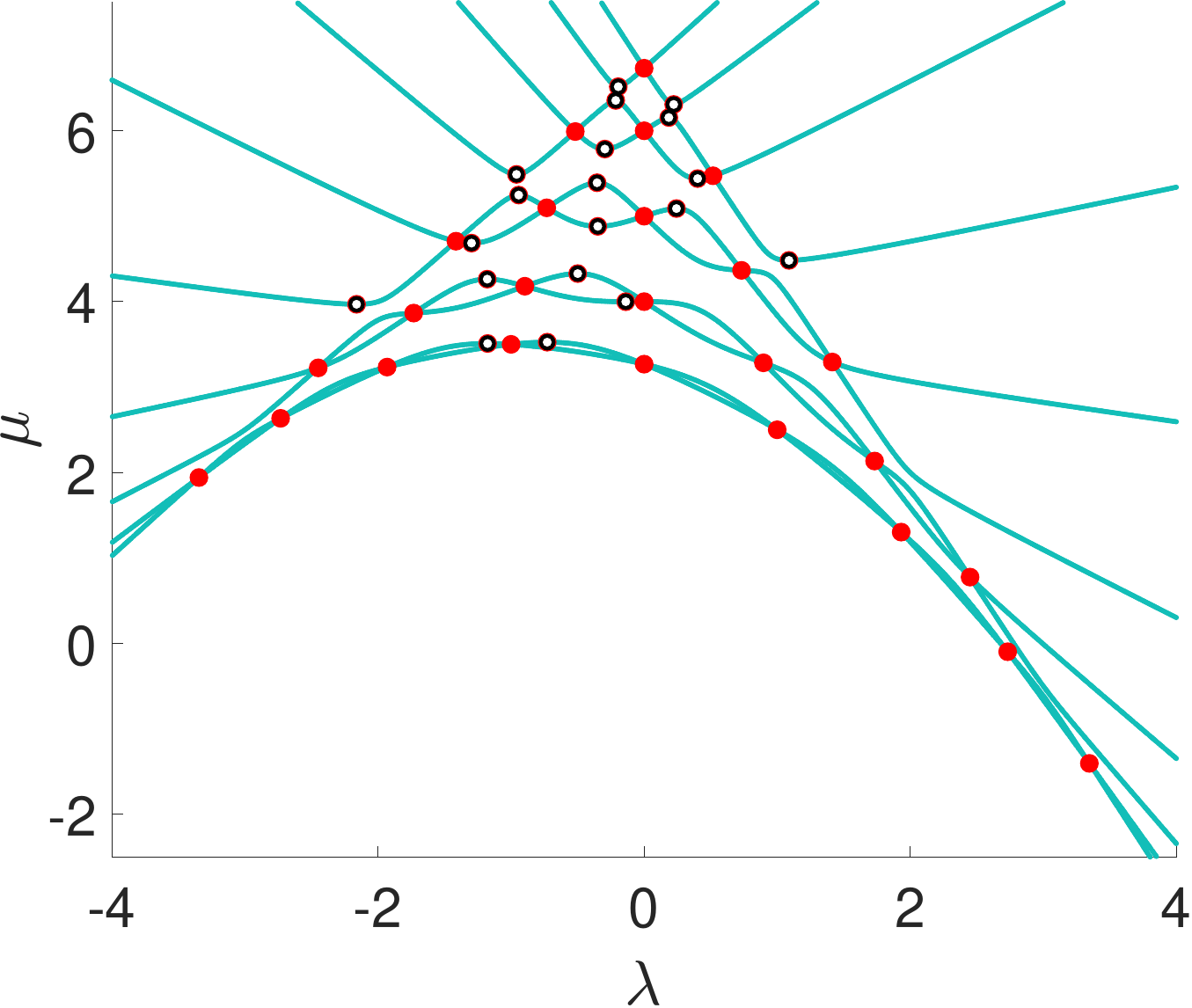}\quad
    \includegraphics[width=6cm]{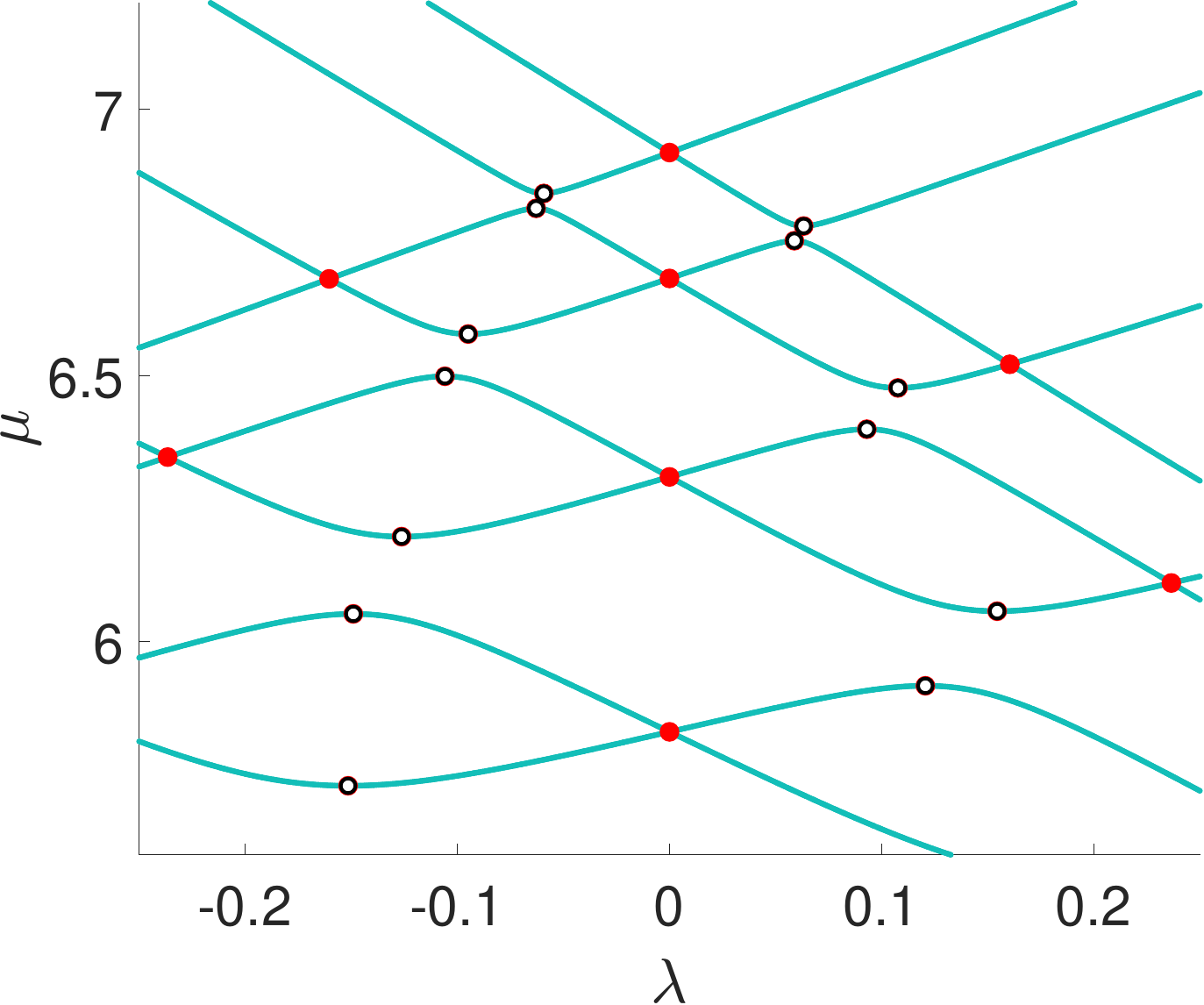}
    \caption{2D-eigenvalues of \eqref{eq:penta} for $n=10$ (left hand side) and 
    $n=20$ (right hand side), where white and red points respectively represent ZGV points and 2D points of type d).}
    \label{fig:ExamplePenta}
\end{figure}

For a large $n$ it is not possible to compute all 2D-eigenvalues due to the size of the $\Delta$ matrices of the corresponding 2EP. For such problems we can apply the MFRD and solve the 2EP \eqref{eq:2ep_elias} by a subspace method that computes 
a small number of eigenvalues close to the target. We demostrate this approach on \eqref{eq:penta} and $n=100$ (this leads to $\Delta$ matrices of size $10000\times 10000$ is we apply the MFRD, or $20000\times 20000$ if we apply Algorithm 1). The computed 2D-eigenvalues are presented in Figure~\ref{fig:ExampleBig}. The left hand side figure shows the 
 first 100 2D-eigenvalues obtained by the Krylov-Schur method for the 2EP \cite{MP_SylvArnoldi} that we applied to search for the 2D-eigenvalues $(\lambda,\mu)$
 with a target $\lambda_0=-0.15$. As it can be seen from the figure, we indeed
 get 2D-eigenvalues such that $\lambda$ is close to $\lambda_0$. The right hand side figure shows the first eight 2D-eigenvalues obtained by the Jacobi-Davidson
 method with harmonic Ritz values for the 2EP  \cite{HP_HarmJDMEP} using the target
 $(\lambda_0,\mu_0)=(0,5)$. For more details and the choice of the parameters, see the Matlab code in the repository with
 the numerical examples. 

The computed four 2D-eigenvalues from the right hand side of Figure \ref{fig:ExampleBig} that are closest to $(0,5)$ are:
 \begin{align*}
 (\lambda_1,\mu_1)&=(          -5.3655644583\cdot 10^{-2}, 4.99203880712),\\
 (\lambda_2,\mu_2)&=(\phantom{-}5.5000908314\cdot 10^{-2}, 5.00489644932),\\
 (\lambda_3,\mu_3)&=(          -3.3513752079\cdot 10^{-2}, 5.04903430286),\\
 (\lambda_4,\mu_4)&=(\phantom{-}3.2092945395\cdot 10^{-2}, 4.94904259728).
 \end{align*}
 
\begin{figure}[h]
    \centering
    \includegraphics[width=6.4cm]{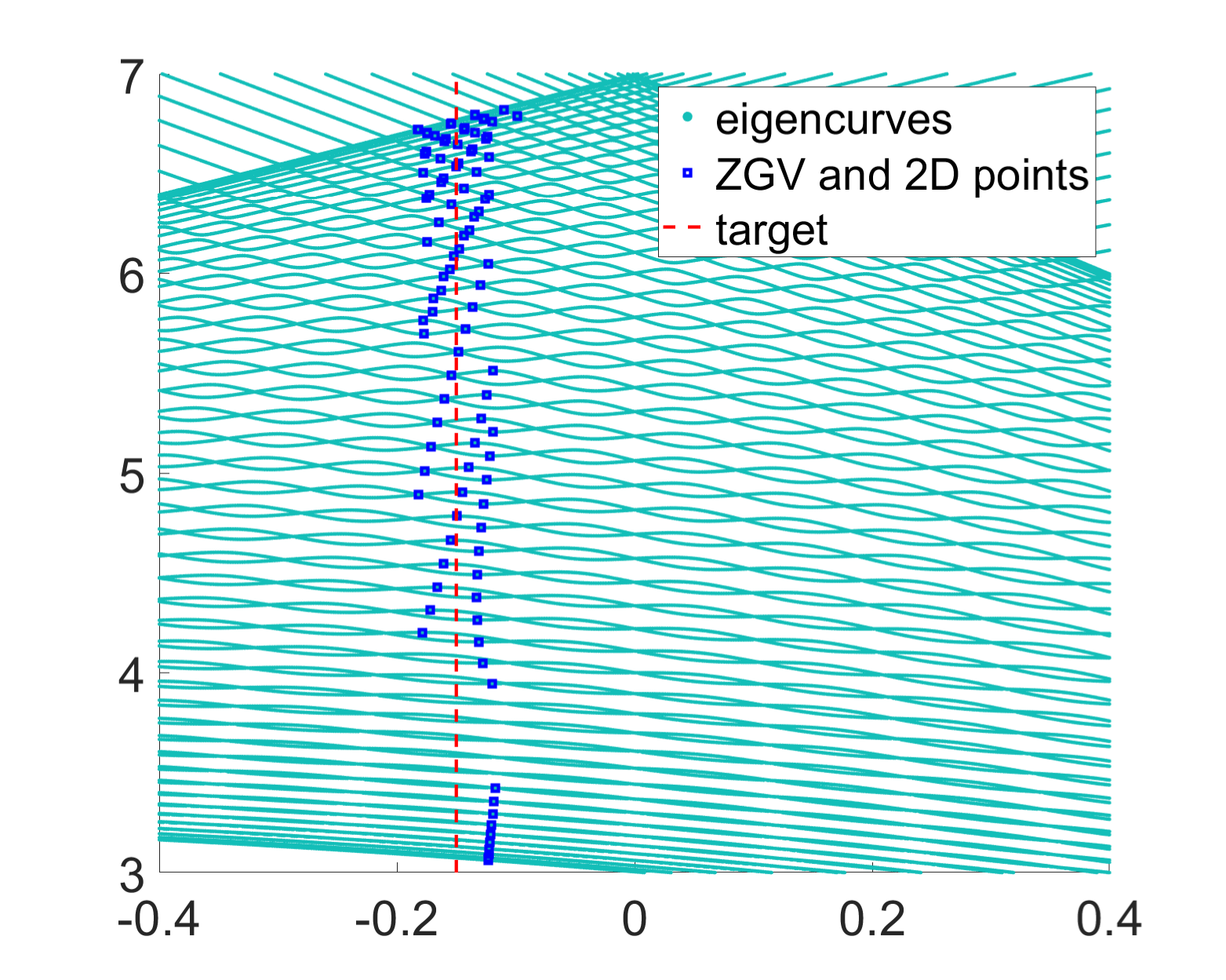}\
    \includegraphics[width=6.4cm]{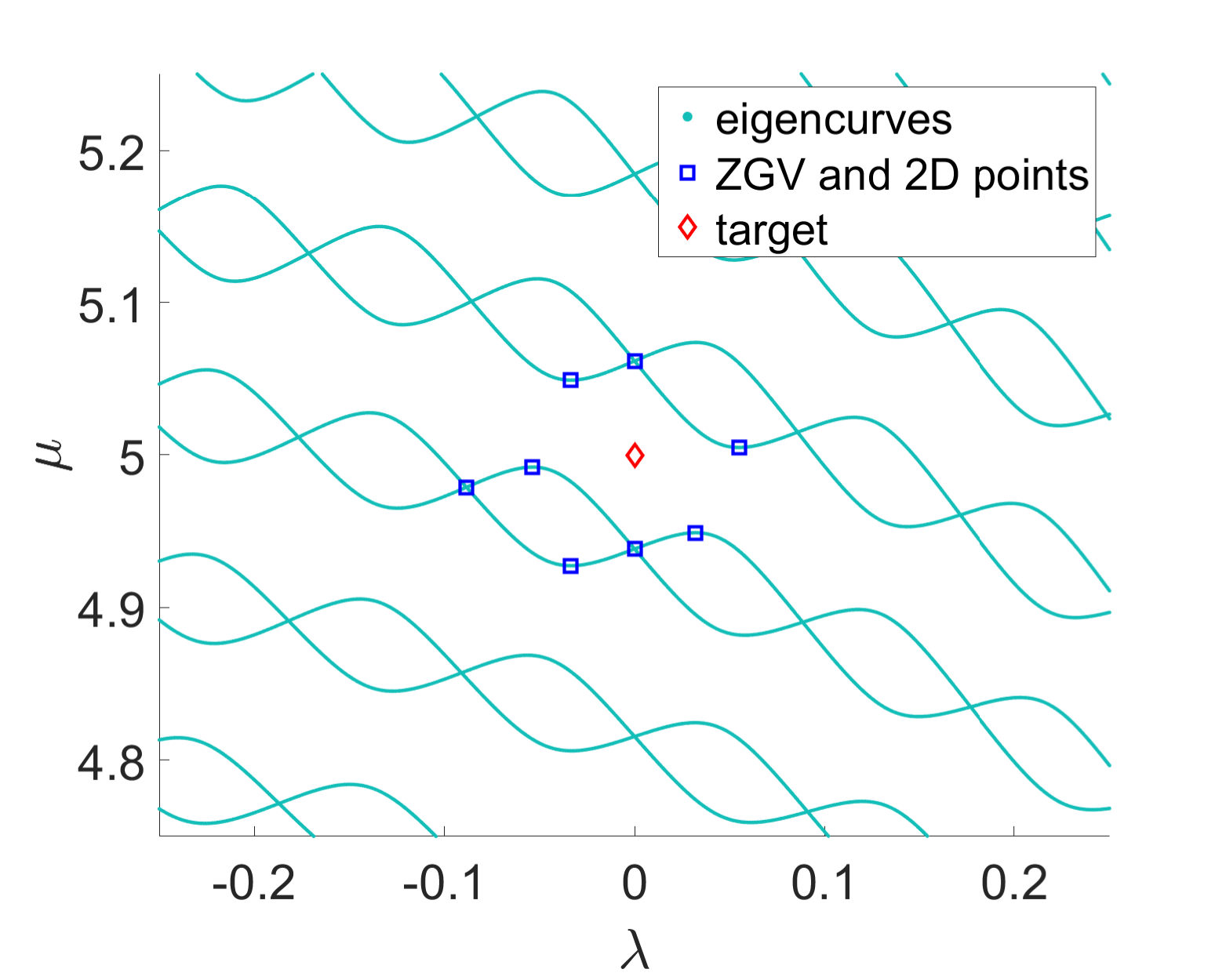}
    \caption{2D-eigenvalues of \eqref{eq:penta} for $n=100$. On the left hand side are first 100 solutions obtained with
    the Krylov-Schur method and the target $\lambda_0=-0.15$, and on the right hand side are
    first 8 solutions obtained with
    the Jacobi-Davidson method and the target $(\lambda_0,\mu_0)=(0,5)$}
    \label{fig:ExampleBig}
\end{figure}
\end{example}

To access the influence of the parameter $\delta$ in the MFRD, we applied the method to 
\eqref{eq:penta} with $n=10$ using values $\delta=10^{-k}$ for $k=1,\ldots,10$. The problem has 39 ZGV points and  25 additional 2D points of type d). For each $\delta$ we performed 10 runs (the results vary due to random processes in the MEP solver from MultiParEig and the 
random vectors $a$ and $b$ in the Gauss-Newton method)
and computed the empirical success rate, defined as the average ratio of 
recovered points (for all 2D points, 2D points of type d), and ZGV points,
respectively). The results are summarized in Table~\ref{tab:toeplitz}.  

We observe that $\delta$ should not be too small nor too large. If $\delta$ is too small, the MEP is nearly singular and the computed eigenvalues are inaccurate. If $\delta$ is too large
the eigenvalues may be perturbed too far away for the Gauss-Newton method to converge.
The success rate of recovering 2D points of type d) is higher for smaller $\delta$ because
the corresponding eigenvalues have higher multiplicities and thus perturb more readily.
On the other hand, the eigenvalues corresponding to ZGV points are simple and easier to detect when $\delta$ is larger.

\begin{table}[htb!] 
\centering
\caption{Empirical success rate of recovering 2D and ZGV points of the pencil $A-\lambda B -\mu I$ for matrices 
\eqref{eq:penta} with $n=10$ using the MFRD and Gauss-Newton.\label{tab:toeplitz}}
\begin{tabular}{cccc|cccc} \hline \rule{0pt}{2.85ex}%
$\delta$ & $P_{\rm all}(\delta)$ & $P_{\rm 2D(d)}(\delta)$ & $P_{\rm ZGV}(\delta)$ & 
$\delta$ & $P_{\rm all}(\delta)$ & $P_{\rm 2D(d)}(\delta)$ & $P_{\rm ZGV}(\delta)$ \\[0.5mm]
\hline \rule{0pt}{2.5ex}%
$10^{-1}$ & $0.5852$ & $0.2320$ & $0.8115$ & $10^{-6}$  & $1.0000$ & $1.0000$ & $1.0000$ \cr 
$10^{-2}$ & $0.8070$ & $0.5060$ & $1.0000$ & $10^{-7}$  & $0.9891$ & $1.0000$ & $0.9821$ \cr
$10^{-3}$ & $0.9273$ & $0.8140$ & $1.0000$ & $10^{-8}$  & $0.7797$ & $1.0000$ & $0.6385$ \cr
$10^{-4}$ & $1.0000$ & $1.0000$ & $1.0000$ & $10^{-9}$  & $0.5281$ & $1.0000$ & $0.2256$ \cr
$10^{-5}$ & $1.0000$ & $1.0000$ & $1.0000$ & $10^{-10}$ & $0.5398$ & $0.9800$ & $0.2577$ \\
\hline
\end{tabular}
\end{table}

\subsection{Distance to instability}
The 2DEVP is related to the  problem of computing the distance to instability of a stable matrix. If $A\in\CC^{n\times n}$ is a stable matrix, i.e., $\re({\lambda})<0$ for all
eigenvalues $\lambda$ of $A$, then
\[\beta(A)=\min\left\{\| E\|_2:\  A+E \textrm{ is unstable}\right\}.
\]
It is well known, see, e.g., \cite{VanLoan_StableMatrix}, that 
$\beta( A)=\min_{\lambda\in\RR}(\sigma_{\rm min}( A-\lambda i I)),$
which is equivalent to
\begin{equation}\label{eq:mu_minimum}
\beta(A)=\min_{\lambda\in\RR}(\lambda_{\rm sp}(\widetilde A-\lambda \widetilde B)),
\end{equation}
where $\lambda_{\rm sp}$ denotes the smallest positive eigenvalue,
\begin{equation}\label{ex:abtilda}
\widetilde A=\left[\begin{matrix}0 & A\cr A^H & 0\end{matrix}\right]\quad
\textrm{and}\quad
\widetilde B=\left[\begin{matrix}0 & iI\cr -iI & 0\end{matrix}\right].\end{equation}
Many numerical methods were proposed for the computation of $\beta(A)$, 
for an overview see, e.g., \cite{Freitag_Spence_LAA_2011}.
Su, Lu and Bai showed in 
\cite[Thm.~3.1]{LuSuBai_SIMAX} that if $\lambda_0$ minimizes \eqref{eq:mu_minimum}, then 
$(\lambda_0,\mu_0)$, where $\mu_0=\lambda_{\rm sp}(\widetilde A-\lambda_0\widetilde B)$, is a 2D-eigenvalue 
of the 2DEVP for $2n\times 2n$ matrices $\widetilde A,\widetilde B$. Based on that, 
$\beta(A)$ can be computed with a generalization of the Rayleigh quotient iteration
for the computation of 2D-eigenvalues from \cite{LuSuBai_SIMAX}.
As each 2D-eigenvalue is a 2D point, this means that for $\beta(A)$ we have to find the real 2D point 
$(\lambda_0,\mu_0)$ of
%the pencil 
$\widetilde A-\lambda \widetilde B -\mu I$ with the smallest $|\mu_0|$.

\begin{example}\label{ex:spence_freitag5}\rm
We consider the matrix $$
A=\left[\begin{matrix} -0.4+6i & 1 & & \cr 1 & -0.1+i & 1 & \cr 0 & 1 & -1-3i & 1 \cr & & 1 & -5+i\end{matrix}\right]$$
from Example 5 in \cite{Freitag_Spence_LAA_2011}. The real eigencurves 
of the pencil $\widetilde A- \lambda \widetilde B -\mu I$ \eqref{ex:abtilda} are presented in Figure
\ref{fig:FS} together with the real 2D points computed with Algorithm 1. As we see on two closeups on the 
right hand side, on the contrary to Example \ref{ex:big} the eigencurves do not intersect, they just come very close and veer apart. This is a well-known behaviour expected in a general case, for more details, see, e.g., \cite{Uhlig_SIMAX}.

\begin{figure}[h]
    \centering
    \includegraphics[width=4cm]{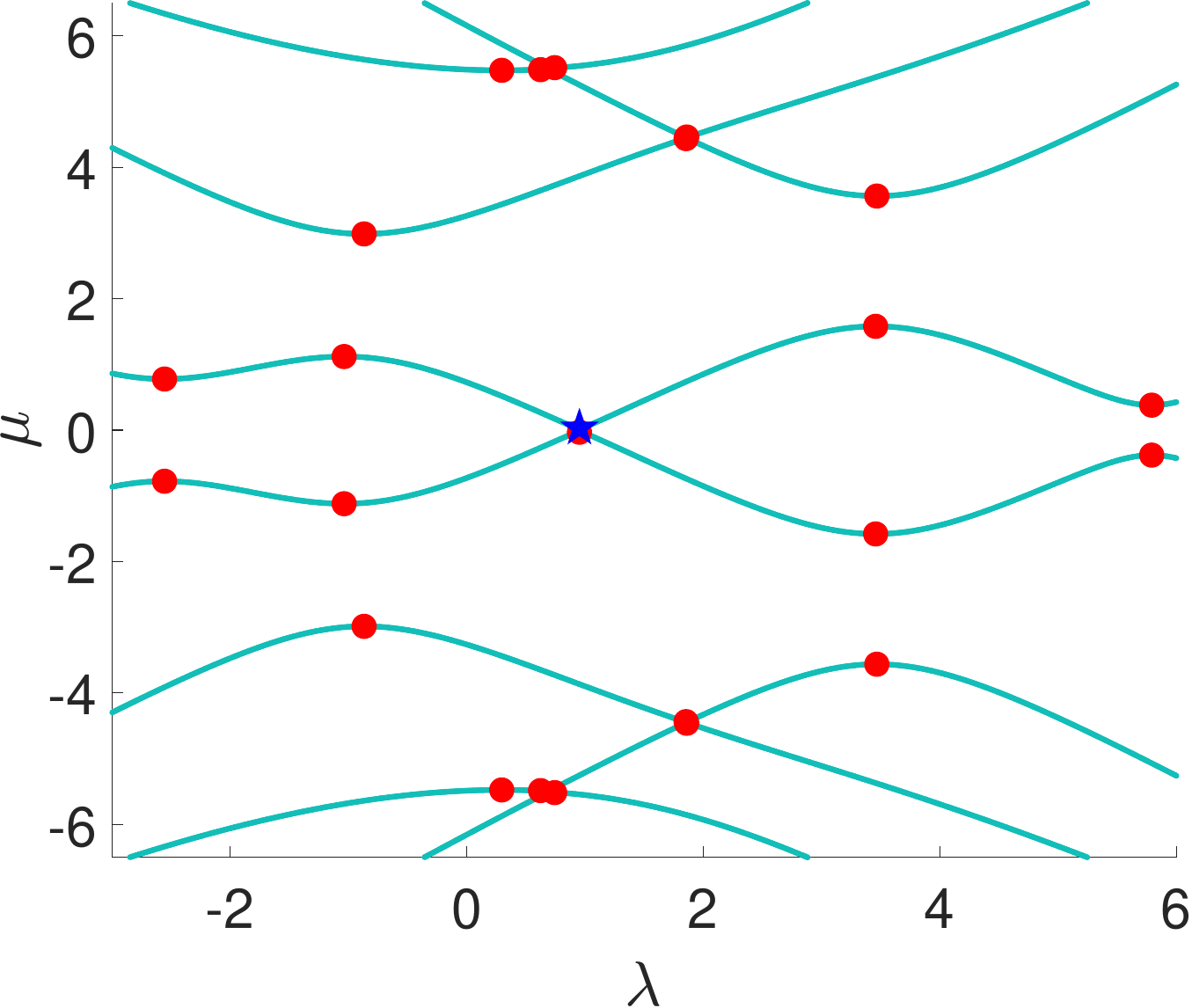}\ \
    \includegraphics[width=4cm]{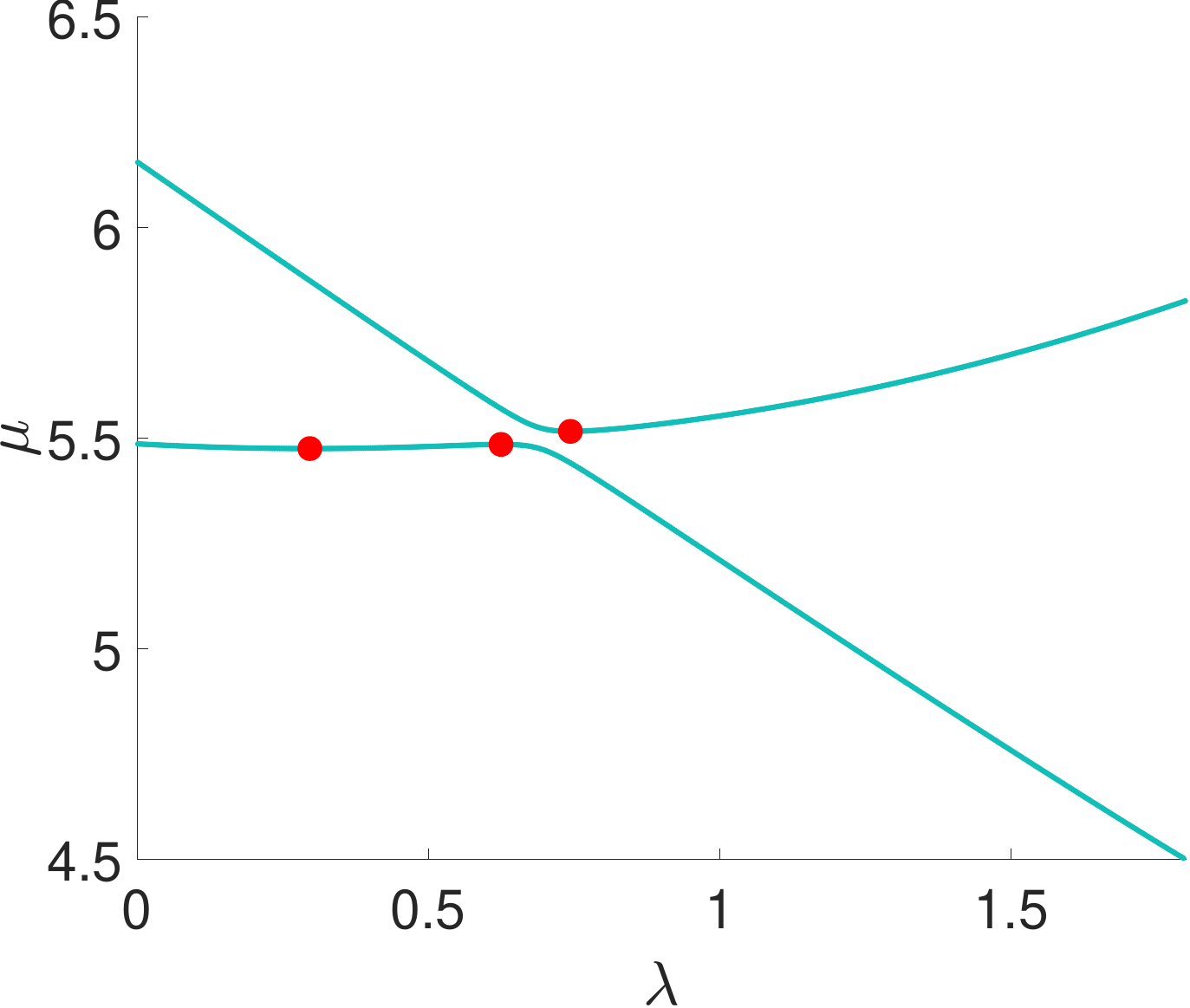}\ \
    \includegraphics[width=4cm]{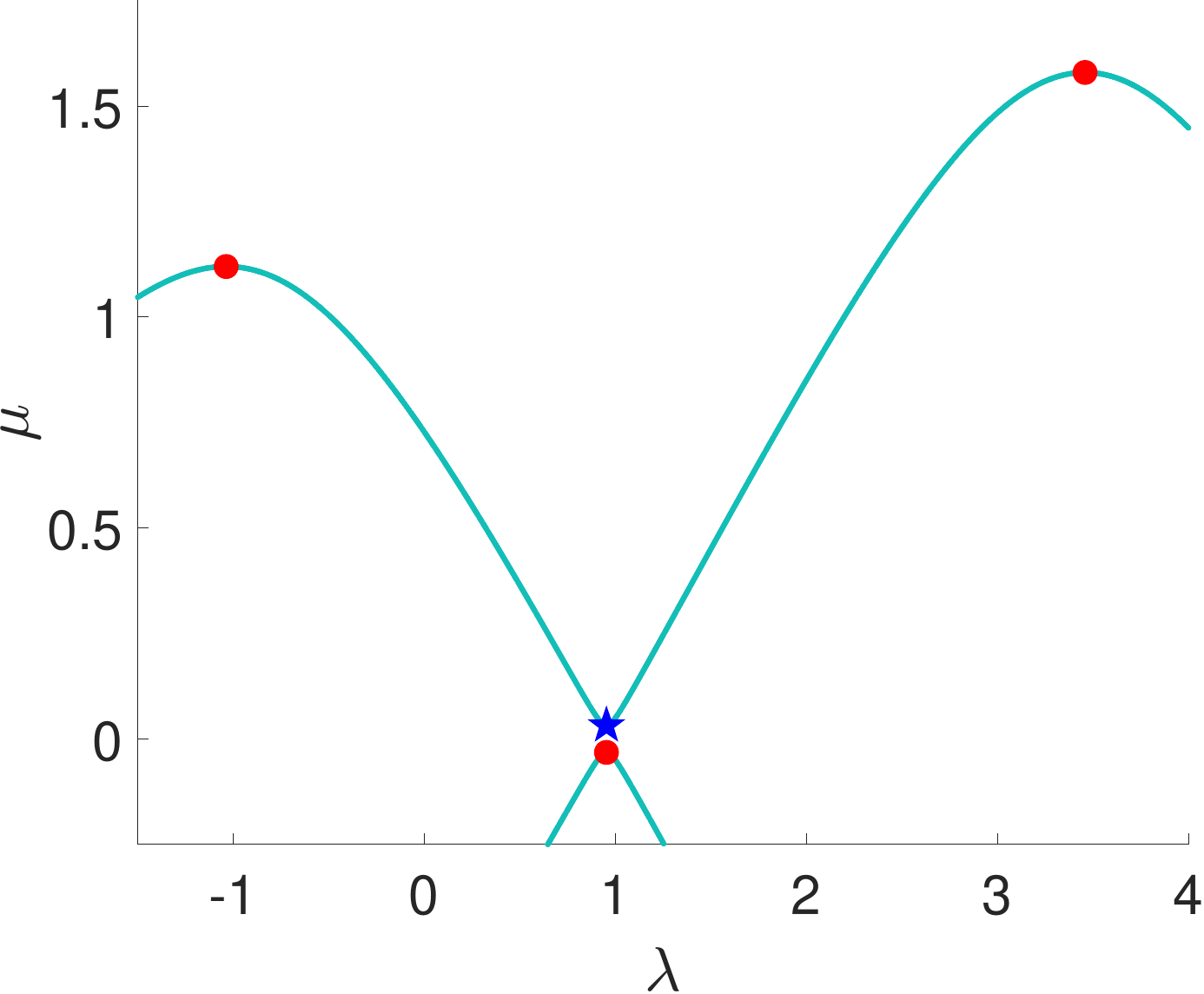}
    \caption{2D-eigenvalues of Example \ref{ex:lusu}}
    \label{fig:FS}
\end{figure}

The method computes all 2D points, which appear in pairs $(\lambda,\pm\mu)$. 
From the 2D point $(0.95301472,0.03188701)$ with the smallest positive $\mu$ (the blue star on Figure~\ref{fig:FS}) we get by further 
refinement by Algorithm 4 that
$\beta(A)=3.188701430320041\cdot 10^{-2}$, which agrees to the result in \cite{Freitag_Spence_LAA_2011}.

Let us remark that this does not lead to an efficient algorithm for the computation of $\beta(A)$. In theory
we could apply Algorithm~3 and use Sylvester-Arnoldi or Krylov-Schur method from \cite{MP_SylvArnoldi} in line 2 to search for the real 2D point $(\lambda_0,\mu_0)$ with the smallest $|\mu_0|$, but in practice this is not an easy task. As it can be seen also from Figure~\ref{ex:spence_freitag5}, we need to compute an interior eigenvalue and, in addition, there 
can be many complex 2D points with a very small $|\mu_0|$.
\end{example}

\subsection{Double eigenvalue problem}

In the double eigenvalue problem, see, e.g.,  \cite{Elias_DoubleEig, MP_DoubleEig}, we have matrices $A,B\in\CC^{n\times n}$ and we are looking for values $\mu$ such 
that $A+\mu B$ has a multiple eigenvalue (generically, such eigenvalues have multiplicity two). 
It follows
that points $(\lambda_0,\mu_0)$, where $\lambda_0$ is a multiple eigenvalue of $A+\mu_0 B$, are exactly 2D points
of the pencil $A+\lambda I +\mu B$. 

Based on the above,  we can compute values $\mu_0$ such that $A+\mu B$ has a multiple eigenvalue from the following 
singular 2EP
\begin{equation}\label{eq:dbleig}
\begin{split}
    (A+\lambda I +\mu B)x & = 0\\
\left(\left[\begin{matrix} A & 0\cr I & A\end{matrix}\right]
+\lambda \left[\begin{matrix} I & 0\cr 0 & I\end{matrix}\right]
+\mu \left[\begin{matrix} B & 0\cr 0 & B\end{matrix}\right]\right)
\left[\begin{matrix} z_1 \cr z_2\end{matrix}\right]&=0.
\end{split}
\end{equation}

The above 2EP for the double eigenvalue problem can be derived like in \cite[Example 7.4]{HMP_SingGep2} from
the property that if $-\lambda_0$ is a multiple eigenvalue of $A+\mu_0 B$ that has algebraic multiplicity one, then there exist nonzero vectors $x$ (an eigenvector) and $y$ (a generalized eigenvector of degree two) such that 
$(A+\mu_0B+\lambda_0I)x=0$ and $(A+\mu_0B+\lambda_0I)y+x=0$.

 A similar approach to solve the double eigenvalue problem
 was used in \cite{MP_DoubleEig}, where a singular 2EP 
 with equations $(A+\lambda I+\mu B)x=0$ and
 $(A+\lambda I +\mu B)^2z=0$ was applied. The second equation is linearized into $(F + \lambda  G + \mu H)z=0$,
 where matrices $F,G,H$ are of size $3n\times 3n$, therefore, the 
 corresponding singular GEPs with $\Delta$-matrices in \cite{MP_DoubleEig} are of size $3n^2\times 3n^2$.
 The approach from \cite{HMP_SingGep2}, which applies \eqref{eq:dbleig}, leads to
 singular GEPs with $\Delta$-matrices of size $2n^2\times 2n^2$ and is more efficient due to smaller matrices. 
 Here we give an even more efficient approach. The key is to apply Algorithm~4, where we have
 to solve nonsingular GEPs with $\Delta$-matrices of size $(2n^2-n)\times (2n^2-n)$. While this is 
 still less efficient than the MFRD approach \cite{Elias_DoubleEig}, where we have to solve 
 a nonsingular 2EPs with corresponding $\Delta$-matrices of size $n^2\times n^2$, 
 it theoretically leads to exact results in exact computation.
 
 \begin{example}\rm\label{ex:doubleeig}
 In Matlab we use {\tt rng(3), A=randn(25), B=randn(25)} to construct two random matrices. There are
 600 values $\mu$ such that $A+\mu B$ has a double eigenvalue and these are the $\mu$ coordinates
 of the 2D points $(\lambda,\mu)$ of the pencil $A+\lambda I+\mu B$. We can compute all the points
 using any of the proposed algorithms, the fastest method is the MFRD combined with the Gauss-Newton method, which
 calculates all points in $1.52$ seconds. 
 In Figure \ref{fig:double} we present all such values $\mu$ inside $[-0.5,0.5]\times [-0.5,0.5]$ that we
 computed by Algorithm 3. For the second computation we used a Krylov-Schur for 2EP in line 2 of Algorithm 3
 to compute six
 solutions closest to the origin that are encircled by red circles. This computation takes $0.19$ seconds and returns the following six 
 solutions $\mu$ with the minimal absolute value:
 \begin{align*}
 \mu_{1,2}&=       -5.1825645307\cdot 10^{-2}\pm 2.1720742563 \cdot 10^{-2}i,\\
 \mu_3 &=\phantom{-}6.9303957823\cdot 10^{-2},\quad \mu_4=              -7.5529261749\cdot 10^{-2},\\
 \mu_{5,6}&=\phantom{-}1.1251034552\cdot 10^{-2}\pm 1.2727979158\cdot 10^{-1}i.
 \end{align*}
This shows that in Algorithm 3 we can apply a subspace method and compute just a small number of solutions.
We can do this also if we apply the MFRD, as we have shown in Example \ref{ex:big}, while in Algorithm 1 this is not possible because we have a singular eigenvalue problem.

 \begin{figure}[h]
    \centering
    \includegraphics[width=6cm]{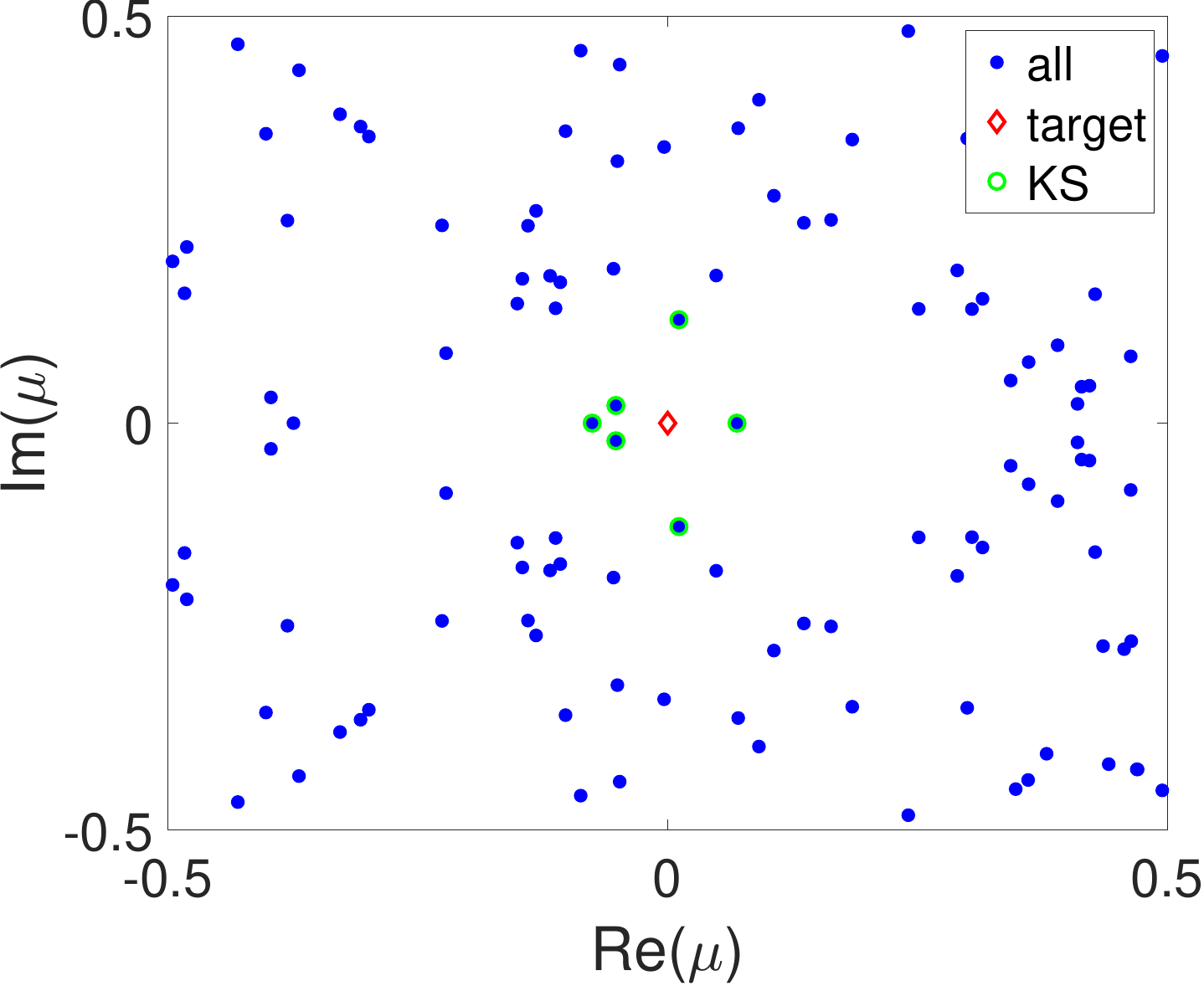}
    \caption{Points $\mu$ in the complex plane such that $A+\mu B$ from Example \ref{ex:doubleeig} has a double eigenvalue}
    \label{fig:double}
\end{figure}

 \end{example}

\subsection{Critical points of two-parameter Sturm-Liouville eigencurves}

We consider a two-parameter Sturm-Liouville problem \cite{AtkinsonMingarelli, BindingVolkmer1996} of the form
\begin{equation}\label{eq:SL_de}
-(p(x)y')'+q(x)y=(\lambda r(x) + \mu) y,\quad a\le x\le b,
\end{equation}
with boundary conditions 
\begin{equation}\label{eq:SL_bc}
\cos(\alpha)y(a)-\sin(\alpha)p(a)y'(a)=0,\quad
\cos(\beta)y(b)-\sin(\beta)p(b)y'(b)=0,
\end{equation}
where $p$ is continuously differentiable and positive on $[a,b]$, $q$ and $r$ are piecewise continuous on $[a,b]$, and $\alpha,\beta$ are real.
The set of $(\lambda,\mu)\in\RR^2$, for 
which there exist a nontrivial solution $y$ that satisfies \eqref{eq:SL_de} and \eqref{eq:SL_bc}, forms eigencurves that are a countable union of graphs of analytic functions, for details see, e.g., \cite{BindingVolkmer1996}. 

For a fixed $\lambda\in\RR$ we get a regular one-parameter Sturm-Liouville problem \eqref{eq:SL_de}, \eqref{eq:SL_bc}. It is well known that for each $n\in{\mathbb N}$ there
exists exactly one eigenvalue $\mu_n(\lambda)$ such that the corresponding eigenfunction $y$ has exactly 
$n-1$ zeros in $(a,b)$.
%that we denote as $\mu_n(\lambda)$. 
We also know that
$\mu_j(\lambda)<\mu_{j+1}(\lambda)$ for each $j\in{\mathbb N}$.
It turns out \cite[Thm.~2.1]{BindingVolkmer1996} that eigencurves $\mu_n(\lambda)$ for $n\in{\mathbb N}$ 
 are analytic functions. We are interested in critical points, where
$\mu_n'(\lambda)=0$, which are clearly generalizations of ZGV points. Thus, we can %compute them numerically if we 
discretize \eqref{eq:SL_de}, \eqref{eq:SL_bc} into 
a two-parameter pencil $A+\lambda B+\mu I$ and then compute the corresponding ZGV points.

\begin{example}\rm\label{ex:Mathieu}
We consider the modified Mathieu equation
\begin{equation}\label{eq:Mathieu}
    y''(x)-2\lambda \cos(2x)y(x) +\mu y(x) = 0
\end{equation}
with boundary conditions $y'(0)=y'(\pi/2)=0$. The first five dispersion curves $\mu_j(\lambda)$ for $j=1,\ldots,5$ together with real ZGV points are presented in Figure \ref{fig:Mathieu}. Beside the 
trivial ZGV points of the form $(0,4k^2)$ for $k=0,\ldots,4$ the remaining ZGV points on the figure are 
$(\pm 11.14606106, 17.41358458)$, 
$(\pm 31.48781869, 42.39762508)$, and
$(\pm 60.12377598, 78.78937721)$. 

To obtain the above ZGV points, we discretized \eqref{eq:Mathieu} by the spectral collocation on $n=25$ points using {\tt bde2mep} function in MultiParEig \cite{multipareig_28}, see \cite{GHPR_SpecColl_AMC} for details, and applied Algorithm 1. We refined the solutions using finer discretizations on $n=50$ and $n=100$ points, where each time a ZGV point on a coarser grid was used as an initial approximation for Algorithm 4. 

\begin{figure}[h]
    \centering
    \includegraphics[width=6cm]{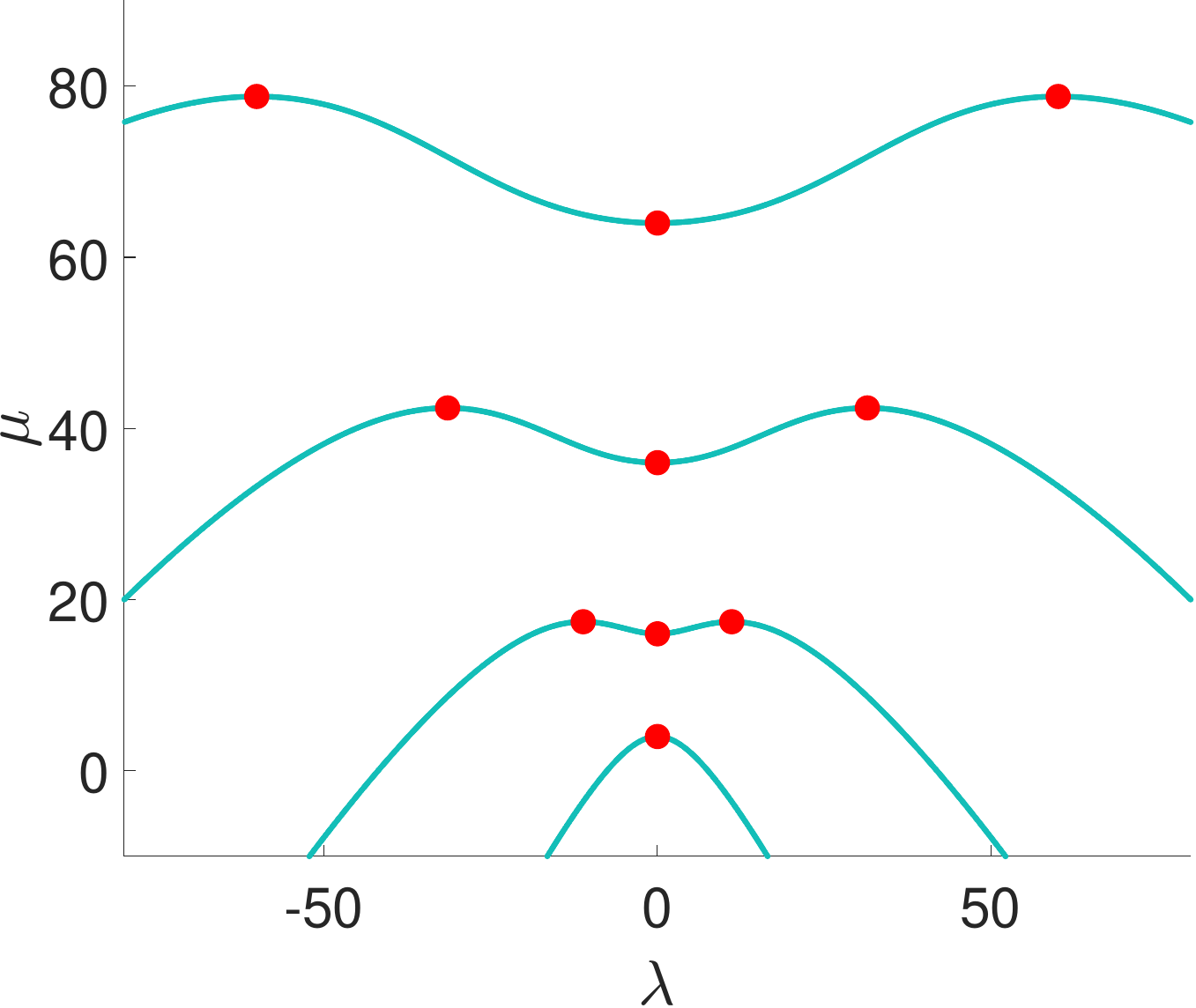}
    \caption{ZGV points for the boundary value problem \eqref{eq:Mathieu} }
    \label{fig:Mathieu}
\end{figure}
\end{example}

\subsection{ZGV points for quadratic eigenvalue problems}
The expression ZGV originates from engineering applications.  
%In applications from engineering parameter-dependent quadratic eigenvalue problems of the form
%\begin{equation}\label{eq:QEP}
%    (K_0+\lambda K_1 + \lambda^2 K_2 +\mu^2 M)x=0
%\end{equation}
%appear, where $K_0,K_1,K_2,M$ are $n\times n$ matrices and $M$ and $K_2$ are nonsingular. 
In the study of anisotropic elastic waveguides, see, e.g., \cite{prada_local_2008}, we 
consider an eigenvalue problem
\begin{equation}\label{eq:ZGV1}
\big(\lambda^2 L_2+ \lambda L_1+L_0+\omega^2 M\big)\,u=0,
\end{equation}
where $L_0$, $L_1$, $L_2$, $M$ are Hermitian $n\times n$ matrices such that  
  $L_2$ and $M$ are nonsingular, obtained by a discretization
of a boundary value problem. The solution
 are dispersion curves $\omega=\omega(\lambda)$. 
 Of particular interest are the {\em zero-group-velocity} (ZGV) points, where 
$\omega$ and $\lambda$ are real, and 
$\omega'(\lambda)=0$. If we assume that $u=u(\lambda)$ and $\omega(\lambda)$ are differentiable, we obtain by differentiating 
\eqref{eq:ZGV1} that at a ZGV point $(\lambda,\omega)$ it holds
\begin{equation}\label{eq:ZGV2}
\big(\lambda^2\, \widetilde L_2+ \lambda\, \widetilde L_1+\widetilde L_0+\omega^2\, \widetilde M\big)\,\widetilde u=0,
\end{equation}
where
\[
\widetilde{L}_2=\left[\begin{matrix}L_2 & 0 \\ 0 & L_2\end{matrix}\right],\quad
\widetilde{L}_1=\left[\begin{matrix}L_1 & 0 \\ 2L_2 & L_1\end{matrix}\right],\quad
\widetilde{L}_0=\left[\begin{matrix}L_0 & 0 \\ L_1 & L_0\end{matrix}\right],\quad
\widetilde{M}=\left[\begin{matrix}M & 0 \\ 0 & M\end{matrix}\right],\quad
\widetilde{u}=\left[\begin{matrix}u \\ u'\end{matrix}\right].
\]
Equations \eqref{eq:ZGV1} and \eqref{eq:ZGV2} form 
a quadratic 2EP \cite{MP_Q2EP, HMP_LinQ2MEP} and
 numerical methods similar to the presented in this paper can be derived for the computation of ZGV points of \eqref{eq:ZGV1}. For more details, see \cite{ZGV_JASA_23}, where a method for the above quadratic 2EP is presented together with a generalization of the MRFD from Section \ref{sec:elias} 
and a generalized Gauss--Newton method from Section \ref{sec:Gauss_Newton}.

Although the next approach is less efficient than the one in \cite{ZGV_JASA_23}, which exploits
the connection to 
a singular quadratic 2EP,
we can substitute $\mu=\omega^2$ and apply 
any of the standard linearizations for the quadratic eigenvalue problem 
to transform (\ref{eq:ZGV1}) into a problem of form (\ref{eq:ABC}) with $2n\times 2n$ matrices. We see from
$\mu'(\lambda)=2\omega(\lambda)\omega'(\lambda)$ that  each ZGV
point of (\ref{eq:ZGV1}) corresponds to a ZGV point of the linearized pencil, which can 
in addition have ZGV points with $\mu=0$.
For instance,
we can write (\ref{eq:ZGV1}) as
\begin{equation}\label{eq:linQEP}
    \left(\left[\begin{matrix}L_0 & L_1\cr 0 & -I\end{matrix}\right]
    +\lambda \left[\begin{matrix}0 & -L_2\cr I & 0\end{matrix}\right] 
    +\mu \left[\begin{matrix}M & 0\cr 0 & 0\end{matrix}\right]\right)
    \left[\begin{matrix}u  \cr \lambda u\end{matrix}\right] =0
\end{equation}
and apply Algorithm 1 or Algorithm 4 to (\ref{eq:linQEP}). We illustrate 
this with a small example.

\begin{example}\rm\label{ex:QEP}
The problem \eqref{eq:ZGV1} with the following diagonal and tridiagonal matrices
\[{
L_2=\left[\begin{matrix}-1 & 0.5 & \cr 0.5 & -2 & 0.5 \cr & 0.5 & -3\end{matrix}\right],\
L_1=\left[\begin{matrix}1 & -0.25 & \cr -0.25 & 2 & -0.25 \cr & -0.25 & -3\end{matrix}\right],\
L_0=\left[\begin{matrix}-1 &  & \cr  & -2 &  \cr & & -3\end{matrix}\right],\
M=\left[\begin{matrix}2 & 1 & \cr 1 & 3 & 1 \cr & 1 & 4\end{matrix}\right]}
\]
has five real ZGV points $(\lambda,\omega)$ such that $\omega>0$, which are presented
in Figure \ref{fig:QEP}. By
applying Algorithm 1 to the linearization \eqref{eq:linQEP} we compute these
ZGV points as:
 \begin{align*}
 (\lambda_1,\omega_1)&=(          -0.2312197373, 0.79089022421),\
 (\lambda_2,\omega_2)=(0.3684223373, 0.82195756940),\\
 (\lambda_3,\omega_3)&=(\phantom{-}0.6315720581, 0.54233673936),\
 (\lambda_4,\omega_4)=(0.1584790129, 0.82797266404),\\
 (\lambda_5,\omega_5)&=(\phantom{-}0.1200999663, 1.10785496051).
 \end{align*}
 
 \begin{figure}[h]
    \centering
    \includegraphics[width=6cm]{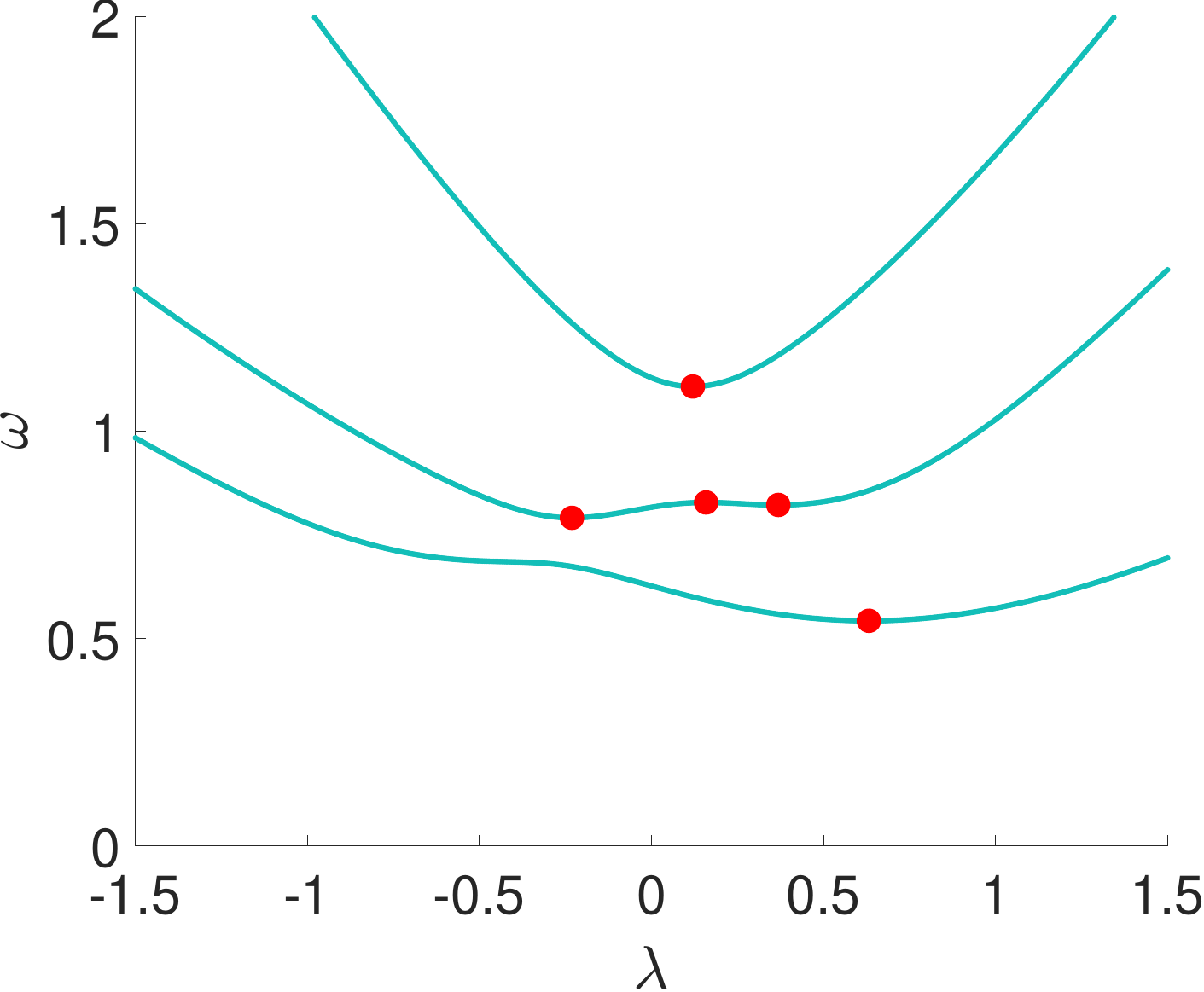}
    \caption{Real ZGV points $(\lambda,\omega)$ with positive $\omega$ for the parameter-dependent quadratic eigenvalue 
    problem from Example \ref{ex:QEP}}
    \label{fig:QEP}
\end{figure}
\end{example}

\section{Conclusions}\label{sec:conclusion}
We have investigated critical points of eigencurves of $n\times n$ bivariate matrix pencils and provided
common theory that links ZGV points, 2D-eigenvalues from \cite{LuSuBai_SIMAX}, and the newly introduced 2D points. We derived a singular 2EP, which is challenging to solve, whose solutions represent the critical points.

We proposed three numerical methods for the
computation of critical points. 
The first method in Algorithm 1 computes all critical points from eigenvalues of a singular GEP of size 
$2n^2\times 2n^2$ that is built from operator determinants related to the singular 2EP. Eigenvalues of this singular GEP 
are computed via a random projection to a GEP of the size of the normal rank from \cite{HMP_SingGep2}.
Because of the high complexity it is in practice feasible only for small problems.
The second method in Algorithm 3 follows the same idea, but due to a structured random projection
requires the above singular GEP only implicitly.
Furthermore, by projecting it into a regular 2EP, this approach is more efficient than Algorithm 1 for the computation of all
critical points, and, unlike Algorithm 1, it can be used to compute just a small number of solutions close to a target. Both of the above algorithms can find all critical points in exact computation.

The third approach is a locally convergent Gauss-Newton-type method, which exhibits quadratic convergence near ZGV points. Combining this method with the method of fixed relative distance from \cite{Elias_DoubleEig} that provides initial approximations yields 
a solver faster than the above, which is suitable for larger problems and is likely to find all critical points. 
The Gauss-Newton-type method can as well be applied to refine the solutions obtained by Algorithm 1 and Algorithm 3.

We presented many possible applications. Through extensive numerical experiments, we have demonstrated that the proposed 
numerical methods can successfully 
compute either all critical points or a subset of critical points close to a given target.  
\medskip

\noindent\textbf{Acknowledgments} The author would like to thank the anonymous referees for their helpful comments and suggestions.
\smallskip

\noindent\textbf{Funding}\quad {Funding was provided by Javna agencija za znanstvenoraziskovalno in inovacijsko dejavnost Republike Slovenije (Grant Nos. N1-0154 and P1-0294).}
\smallskip

\noindent\textbf{Code availability}\quad The code and data for numerical examples are available 
at \url{https://github.com/borplestenjak/ZGV_Points}.
\smallskip

\section*{Declarations}

\noindent\textbf{Conflict of interests}\quad The author has no conflict of interest to declare that are relevant to the content of this article.

%\bibliographystyle{abbrv} % We choose the "plain" reference style
%\bibliography{refs_BP, refs_MEP, refs_others} % Entries are in the refs.bib file

%%%%%%% Before submitting, comment the above and replace it by \input{main.bbl}, you can download bbl file from Logs and soutput files button/Other logs and files

\appendix
%\color{magenta}
\section{Generic situation}

The following theorem relies on results from algebraic geometry that can be found in, e.g., \cite{Hartshorne_AG} and \cite{Shafarevich1}.

\begin{theorem}\label{thm:appendix} 
There exist generic sets $\Omega_3\subset\Omega_2\subset\Omega_1\subset (\CC^{n\times n})^3$ such that:
\begin{enumerate}%[left=0em]
\item[1)] For all $(A,B,C)\in\Omega_1$, the matrices $B$ and $C$ are nonsingular, and
the GEP $(A+\lambda_0B)x + \mu Cx=0$ has a multiple eigenvalue in $\mu$ for only finitely many $\lambda_0\in\CC$.
\item[2)] For all $(A,B,C)\in\Omega_2$,
the bivariate pencil 
$A+\lambda B+\mu C$ has exactly $n(n-1)$ distinct 2D points. % $(\lambda_0,\mu_0)\in\CC^2$.
\item[3)] For all $(A,B,C)\in\Omega_3$, 
if $(\lambda_0,\mu_0)\in\CC^2$ is a 2D point of 
$A+\lambda B+\mu C$, then $(\lambda_0,\mu_0)$ is a ZGV point
and $\lambda_0$ is a double eigenvalue in $\lambda$ of the GEP $(A+\mu_0 C)x+\lambda Bx=0$.
\end{enumerate}
\end{theorem}

\begin{proof}
Let $f(\lambda,\mu,A,B,C):=\det(A+\lambda B+\mu C)$.
\begin{enumerate}
\item[1)] We consider $f$ as a polynomial in $\mu$ whose coefficients are polynomials in $\lambda$ and the entries of $A,B,C$. Then $f$ has a multiple root if the resultant $\mathrm{Res}_\mu(f,f_\mu)$ of $f$ and its derivative $f_\mu=\frac{\partial f}{\partial \mu}$ 
% $(\lambda,\mu,A,B,C)$ 
is zero. Since $\mathrm{Res}_\mu(f,f_\mu)$ is a polynomial in $\lambda$ whose coefficients are polynomials in the entries of $A,B,C$, 
this happens for only finitely many $\lambda$ unless 
$\mathrm{Res}_\mu(f,f_\mu)\equiv 0$. We can take 
$\Omega_1=(\CC^{n\times n})^3\backslash (S_{1,a}\cup S_{1,b}\cup S_{1,c})$,
where
$S_{1,a}=\{(A,B,C):\ \det(B)=0\}$,
$S_{1,b}=\{(A,B,C):\ \det(C)=0\}$, and
$S_{1,c}=\{(A,B,C):\ \mathrm{Res}_\mu(f,f_\mu)\equiv 0\}$
are algebraic sets and thus $\Omega_1$ is generic.

\item[2)] Let $(A,B,C)\in\Omega_1$. Then 
$B$ and $C$ are nonsingular and,  
by Lemma \ref{lem:dbl}, $(\lambda_0,\mu_0)\in\CC^2$ is a 2D point if and only if $\lambda_0$ is a multiple eigenvalue of the GEP $(A+\mu_0 C)x+\lambda Bx=0$. If we now consider $f$ as a polynomial in $\lambda$, then
$g(\mu,A,B,C):=\mathrm{Res}_\lambda(f,f_\lambda)$, where 
$f_\lambda=\frac{\partial f}{\partial \lambda}$, is a polynomial in $\mu$ of degree $n(n-1)$ unless the leading coefficient, which
is a polynomial in the entries of $A,B,C$, vanishes. We introduce
algebraic sets
%vanishes. 
%whose leading coefficient at $\mu^{n(n-1)}$ is a polynomial in the entries of $A,B,C$. 
$S_{2,a}=\{(A,B,C):\ \textrm{coefficient of}\ \mu^{n(n-1)}\ \textrm{in}\ g\ \textrm{is zero}\}$
and %Then $g(\mu,A,B,C)$ has exactly 
%$n(n-1)$ roots $\mu$ (counted with multiplicity)
%for each $(A,B,C)\in\Omega_1\backslash S_{2,a}$. 
%Since $g(\mu,A,B,C)$ has a multiple root $\mu$
%if $\mathrm{Res}_\mu(g,g_\mu)=0$, we introduce another algebraic set
$S_{2,b}=\{(A,B,C):\ \mathrm{Res}_\mu(g,g_\mu)=0\}$,
and take $\Omega_2=\Omega_1\backslash (S_{2,a}\cup S_{2,b})$.

Then
$\Omega_2$ is generic and for each $(A,B,C)\in\Omega_2$ the polynomial
$g$ has $n(n-1)$ distinct roots. To each root $\mu_0$ there corresponds
at least one 2D point $(\lambda_0,\mu_0)$, where
$\lambda_0$ is a multiple eigenvalue of the GEP $(A+\mu_0 C)x+\lambda B x=0$.
Since 2D points are the roots of the system of polynomials
$f(\lambda,\mu,A,B,C)=0$ and $f_\lambda(\lambda,\mu,A,B,C)=0$, which
cannot have more than $n(n-1)$ roots by B\'ezout's theorem, 
it follows that
$A+\lambda B +\mu C$ has exactly $n(n-1)$ distinct 2D points.

\item[3)]
We observe that $f$ as a polynomial in $\lambda$
has a root of multiplicity three or higher  if
in addition to $\mathrm{Res}_\lambda(f,f_\lambda)$ also its first subresultant, which is again a
polynomial in $\mu$ and the entries of $A,B,C$, is zero, i.e.,
$\mathrm{Res}_\lambda(f,f_\lambda)=\mathrm{Sres}_{1,\lambda}(f,f_\lambda)=0$.
This happens only if $\mathrm{Res}_\mu\big(\mathrm{Res}_\lambda(f,f_\lambda),\mathrm{Sres}_{1,\lambda}(f,f_\lambda)\big)=0$, which is a polynomial condition in the entries of $A,B,C$ that we use to define the algebraic set $S_{3,a}$.

To exclude the possibility of 2D points $(\lambda_0,\mu_0)$ such that 
$\mu_0$ is a multiple eigenvalue of the GEP $(A+\lambda_0 B)x+\mu Cx=0$, i.e.,
$(\lambda_0,\mu_0)$ is a common root
of $f, f_\lambda$, and $f_\mu$, we introduce the algebraic set
$S_{3,b}$ in $\CC\times \CC\times (\CC^{n\times n})^3$ of 
 all $(\lambda,\mu,A,B,C)$ such that  
$f(\lambda,\mu,A,B,C)=0$, $f_\lambda(\lambda,\mu,A,B,C)=0$, and 
$f_\mu(\lambda,\mu,A,B,C)=0$.
The Zariski closure $\overline{\Pi(S_{3,b})}$ of its projection $\Pi(S_{3,b})$ onto
$(\CC^{n\times n})^3$ is algebraic. To show that $\overline{\Pi(S_{3,b})}$ is proper, we need
$(A_0,B_0,C_0)\in\Omega_2$ such that $f_\mu(\lambda,\mu,A_0,B_0,C_0)\ne 0$ 
for all 2D points of 
$A_0+\lambda B_0+\mu C_0$.

Due to Dixon's result on determinantal representations \cite{Dixon_1902}, there exist $n\times n$ matrices $A_0,B_0,C_0$ such that
\begin{equation}\label{eq:gen_primerABC}
f(\lambda,\mu,A_0,B_0,C_0)=\lambda^n-\lambda-\mu^n+1.
\end{equation}
We see that $f_\lambda(\lambda,\mu)=n\lambda^{n-1}-1$  
vanishes when $\lambda^{n-1}=\frac{1}{n}$, which 
gives $n-1$ simple solutions $\lambda$. By inserting such
$\lambda$ in
 \eqref{eq:gen_primerABC}, we see that corresponding
$\mu$ satisfy the equation $\mu^n=1-\frac{n-1}{n}\lambda$, which
has $n$ nonzero simple solutions. This way we obtain $n(n-1)$ distinct 2D points.
Since $f_\mu(\lambda,\mu)=-n\mu^{n-1}$ and all 2D points have nonzero $\mu$,
it follows that $f_\mu$ is nonzero for all 2D points of $A_0+\lambda B_0+\mu C_0$.

Then, by continuity and implicit function theorem, 
$f_\mu$ is nonzero % $\lambda,\mu,A,B,C)\ne 0$ 
for all 2D points of the bivariate pencil 
$A+\lambda B+\mu C$
for all 
$(A,B,C)$ in a sufficiently small open Euclidean
neighborhood of $(A_0,B_0,C_0)$. This shows that
$(A_0,B_0,C_0)$ does not lie in the Zariski closure of $\Pi(S_{3,b})$, therefore
$\overline{\Pi(S_{3,b})}$ is proper and we can take 
$\Omega_3=\Omega_2\backslash (S_{3,a}\cup \overline{\Pi(S_{3,b})})$.
\end{enumerate}
\end{proof}

Note that matrices $(A,B,C)\in\Omega_1$ satisfy the assumptions of Theorem \ref{thm:nrank_eig}. 
If $(A,B,C)\in\Omega_3$, then all 2D points are ZGV points, i.e., of type a) from Remark \ref{rem:abc_2D_cases}, which is the generic case.

\end{document}